\numberwithin{equation}{section}
\newcommand{\norm}[1]{\left\lVert#1\right\rVert}
\newcommand\restr[2]{{
  \left.\kern-\nulldelimiterspace 
  #1 
  \right|_{#2} 
}}
\newcommand{\R}{\mathbb{R}}
\DeclareMathOperator*{\ces}{ces}
\DeclareMathOperator*{\proj}{proj}
\DeclareMathOperator*{\esup}{ess \ sup}
\DeclareMathOperator*{\supp}{supp}
\DeclareMathOperator*{\Wig}{Wig}
\DeclareMathOperator*{\op}{op}
\def\MR#1{}}
\let\OLDthebibliography=\thebibliography
\def\thebibliography#1{
\OLDthebibliography{#1}
\addcontentsline{toc}{section}{\refname}}
\newtheorem{theo}{Theorem}[section]
\newtheorem{prop}[theo]{Proposition}
\newtheorem{defin}[theo]{Definition}
\newtheorem{lema}[theo]{Lemma}
\newtheorem{cor}[theo]{Corollary}
\theoremstyle{remark}
\newtheorem{nota}[theo]{Remark}
\newtheorem{exam}[theo]{Example}
\title{On the compactness of the Weyl operator in $\mathcal{S}_{\omega}$}
\author[Asensio]{Vicente Asensio}
\address{Instituto Universitario de Matem\'atica Pura y Aplicada IUMPA\\
Universitat Po\-li\-t\`ecni\-ca de Val\`encia\\
Camino de Vera, s/n\\
E-46022 Val\`encia\\
Spain}
\address{Centro Universitario EDEM \\
Pla\c{c}a de l'Aigua, s/n\\
E-46024 Val\`encia\\
Spain
}
\email{viaslo@upv.es, vasensio@edem.es}
\author[Boiti]{Chiara Boiti}
\address{
Dipartimento di Matematica e Informatica \\Universit\`a di Ferrara\\
Via Ma\-chia\-vel\-li n.~30\\
I-44121 Ferrara\\
Italy}
\email{chiara.boiti@unife.it}
\author[Jornet]{David Jornet}
\address{
Instituto Universitario de Matem\'atica Pura y Aplicada IUMPA\\
Universitat Po\-li\-t\`ecni\-ca de Val\`encia\\
Camino de Vera, s/n\\
E-46022 Val\`encia\\
Spain}
\email{djornet@mat.upv.es}
\author[Oliaro]{Alessandro Oliaro}
\address{Dipartimento di Matematica\\ Universit\`a di Torino\\
 Via Carlo Alberto n.~10\\ I-10123 Torino\\ Italy}
 \email{alessandro.oliaro@unito.it}
\begin{document}

\keywords{Compact Weyl operator, compact localization operator, global ultradifferentiable class, $\omega$-convolutor, $\omega$-multiplier}
\subjclass[2020]{Primary 47B07, 42B35; Secondary 42C20, 46F12;}
\maketitle

\begin{abstract}
	We characterize, using time-frequency analysis, the continuity and compactness of the Weyl operator in global classes of ultradifferentiable functions $\mathcal{S}_\omega$, for weight functions $\omega$ in the sense of Braun, Meise and Taylor. As a consequence, we give results about the compactness of the localization operator in $\mathcal{S}_\omega$, in relation with the spaces of $\omega$-multipliers and $\omega$-convolutors of $\mathcal{S}_\omega$. Moreover, we provide several examples that complement our investigation. 
\end{abstract}

\section{Introduction}

In 1988, Daubechies~\cite{Daubechies} introduced localization operators to localize a signal both in time and frequency, although a particular case had already been considered by Berezin in 1971. They are also known in some cases as Toeplitz operators or anti-Wick operators. These operators were traditionally defined on $L^{2}$ or on some Sobolev spaces, but they can be defined on more general spaces, called modulation spaces. The modulation spaces $M^{p,q}$ $(1\le p,q\le \infty)$ were introduced by Feichtinger as appropriate classes of functions or distributions to discuss certain problems related to time-frequency analysis. For $p = q = 2$, we get $M^{p,q} = L^2$ and the appropriate weighted versions of the modulation spaces produce Sobolev spaces. Several authors have studied the compactness of localization operators when acting on modulation spaces.   In \cite{FG-Compact}, the authors characterized the symbols $F$ in the modulation class $M^\infty(\R^{2d})$ with the property that the localization operator with symbol $F$ and windows $\varphi,\psi$, denoted by $L^F_{\varphi,\psi}$, is a compact operator when acting on $L^2(\R^d)$ for every pair of windows $\varphi,\psi$ in the Schwartz class. Boggiatto~\cite{boggiatto} proved that each $F\in L^{\infty}(\R^{2d})$ vanishing at infinity defines a compact localization operator on $M^{p,q}(\R^d)$. However, there exists a compact localization operator whose symbol $F$ is a bounded function with constant modulus, thus not vanishing at infinity~\cite{FG-Some}. Boundedness and Schatten-class conditions of localization operators were  investigated in \cite{CG,CG1}. Boiti and De Martino \cite{BdM} gave a sufficient condition for the compactness of the localization operator in modulation spaces with exponential weights, thus extending partially the work in \cite{FG-Compact}.
Bastianoni, Cordero and Nicola studied in \cite{BCN} properties of eigenfunctions of compact localization operators.

  One of earliest forms of pseudodifferential operator arises in quantum mechanics in 1931 introduced by Weyl as a rule that associates a self-adjoint operator $a^w(x,D)$ with a real-valued symbol $a(x,\xi)$ defined in $\R^{2d}$. Localization operators can be also represented as this type of pseudodifferential operators, the so-called Weyl operators. Sj\"ostrand~\cite{Sj} showed that if the symbol belongs to the modulation class $M^{\infty,1}(\R^{2d})$ (called Sj\"ostrand class), then the corresponding Weyl operator is continuous on $L^{2}(\R^{d})$. Continuity of Weyl operators on modulation classes was studied by Gr\"ochenig, Heil, Toft and Czaja, 
  among others~\cite{Cz,GH, T}. A very important tool is the study of the continuity of the Wigner transform between different classes of modulation spaces~\cite{T}. In particular, if the symbol belongs to the Sj\"ostrand class  and $1 \le p,q \le \infty$, then the corresponding Weyl operator is bounded on $M^{p,q}(\R^d)$ \cite[14.5.2]{G-Found} and, if the symbol can be approximated in $M^{\infty,1}(\R^{2d})$ by functions in the Schwartz class, then the corresponding Weyl operator is compact on $M^{p,q}(\R^d)$  (see, for instance, \cite{BGHO}).  In \cite{FG-Some}, the authors proved that, if the symbol is a tempered distribution and, for some  $ 1 < p, q < \infty$, the  corresponding Weyl operator is compact on $M^{p,q}(\R^d)$, then the short-time Fourier transform (STFT) of the symbol vanishes at
infinity, that is, it belongs to $M^{\infty}(\R^{2d})$ and it is the limit in $M^{\infty}(\R^{2d})$ of a sequence of functions in the Schwartz class. Moreover, when the symbol belongs to the Sj\"ostrand class, the authors characterized when the Weyl operator is compact for every $1\le p,q\le \infty$. 
  
  In this work, we characterize the continuity and compactness of the  Weyl operator when defined in $\mathcal{S}_\omega(\R^{d})$, modulated by a weight function $\omega$ in the sense of Braun, Meise and Taylor~\cite{BMT}. We use purely techniques from time-frequency analysis and give the characterization in terms of the STFT of the symbol of the operator. In our setting, the importance of the class $\mathcal{S}_\omega(\R^{d})$  is twofold. On the one hand, we obtain a scale of classes including, for particular selections of $\omega$, the Schwartz class, or, for instance, many Gelfand-Shilov spaces. On the other hand, these classes appear in a natural way in time-frequency analysis in relation with modulation spaces with exponential weights, since they are the intersection of modulation spaces with exponential weights. Therefore, several problems related with these classes can be studied with time-frequency analysis. We also give some consequences to the compactness of localization operators on $\mathcal{S}_\omega(\R^{d})$ and obtain relations  with the spaces of $\omega$-multipliers and $\omega$-convolutors in the sense of \cite{AM-Mult,AM}.

  The paper is organized as follows. In Section~\ref{preli} we give the necessary notation, definitions and known results used in the following sections.  In Section~\ref{mult-convo} we characterize the spaces of multipliers and convolutors introduced in \cite{AM-Mult,AM} in terms of the STFT, thus extending the previous work in \cite{BO}. Since a particular case of the Weyl operator is the multiplication operator, we study in Section~\ref{SectCompMult} the compactness of the multiplication operator on $\mathcal{S}_\omega(\R^{d})$. In Section~\ref{Weyl oper} we characterize the compactness of the Weyl operator on $\mathcal{S}_\omega(\R^{d})$ and provide several examples. Finally, in Section~\ref{SectLoc}, we give consequences for the study of the compactness of the localization operator in  $\mathcal{S}_\omega(\R^{d})$ and give a concrete example of a compact localization operator whose symbol is not an $\omega$-convolutor. We also obtain some further results about the compactness of the localization operator in related spaces. 

\section{Preliminaries}\label{preli}
We consider weight functions in the sense of Braun, Meise, and Taylor~\cite{BMT}:
\begin{defin}
A {\em non-quasianalytic weight function} $\omega:[0,+\infty[ \to [0,+\infty[$ is an increasing and continuous function satisfying:
\begin{itemize}
\item[$(\alpha)$] There exists $L\geq1$ such that $\omega(2t) \leq L\omega(t) + L$, $t\geq0$;
\item[$(\beta)$] $\int_1^{+\infty} \frac{\omega(t)}{1+t^2} dt < +\infty$;
\item[$(\gamma)$] $\log(t) = O(\omega(t))$ as $t \to \infty$; 
\item[$(\delta)$] The function $\varphi_{\omega}(t) = \omega(e^t)$ is convex.
\end{itemize}
\end{defin}
A weight function, or simply, weight, is extended to $\mathbb{C}^d$ in the following way: $\omega(\xi) = \omega(|\xi|)$, $\xi \in \mathbb{C}^d$, where $|\xi|$ denotes the Euclidean norm in $\mathbb{C}^d$. 
The \emph{Young conjugate $\varphi^{\ast}_{\omega}:[0,+\infty[ \to [0,+\infty[$ of $\varphi_{\omega}$} is given by $\varphi^{\ast}_{\omega}(t) = \sup_{s\geq0} \{ st-\varphi_{\omega}(s)\}$.

In the following we will consider also subadditive weights, i.e. weights satisfying
$$ (\alpha') \qquad \qquad \omega(s+t) \leq \omega(s) + \omega(t), \quad s,t \geq 0. $$
We observe that $(\alpha')$ is stronger than $(\alpha)$.

The \emph{Fourier transform} of $f \in L^1(\mathbb{R}^d)$ is denoted by
$$ \widehat{f}(\xi) = \mathcal{F}(f)(\xi) := \int_{\mathbb{R}^d} e^{-it\cdot \xi} f(t) dt, \qquad \xi \in \mathbb{R}^d. $$
Then, the inverse of the Fourier transform is given by
$$ \mathcal{F}^{-1}(f)(x) = (2\pi)^{-d} \int_{\mathbb{R}^d} e^{i\xi \cdot x} f(\xi) d\xi, \qquad x \in \mathbb{R}^d. $$
When $f \in L^1(\mathbb{R}^{2d})$, we can also consider \emph{partial Fourier transforms}. For example, the second partial Fourier transform is denoted by
$$ \mathcal{F}_2(f)(x,\xi) := \mathcal{F}_{t \mapsto \xi}(f)(x,t) = \int_{\mathbb{R}^d} e^{-it\cdot \xi} f(x,t) dt, \qquad (x,\xi) \in \mathbb{R}^{2d}, $$
and the inverse of the partial Fourier transform is given analogously. The definitions of Fourier and partial Fourier transform can be extended in a standard way to more general spaces of functions or distributions.

\begin{defin}
For a weight function $\omega$, we define the space $\mathcal{S}_{\omega}(\mathbb{R}^d)$ as those $f \in L^1(\mathbb{R}^d)$ such that ($f, \widehat{f} \in C^{\infty}(\mathbb{R}^d)$ and) for all $\lambda>0$ and $\alpha \in \mathbb{N}_0^d$,
$$ \sup_{x \in \mathbb{R}^d} |D^{\alpha} f(x)| e^{\lambda\omega(x)} < +\infty \qquad \text{and} \qquad \sup_{\xi \in \mathbb{R}^d} |D^{\alpha} \widehat{f}(\xi)| e^{\lambda\omega(\xi)} < +\infty. $$
We denote by $\mathcal{S}'_{\omega}(\mathbb{R}^d)$ the strong dual space of $\mathcal{S}_{\omega}(\mathbb{R}^d)$.
\end{defin}
\noindent We refer to \cite{BJO-Regularity, RealPW} for equivalent characterizations of 
$\mathcal{S}_{\omega}(\mathbb{R}^d)$.
 If $\omega(t)=\log(1+t)$, it is not difficult to see that  $\mathcal{S}_{\omega}(\mathbb{R}^d)=\mathcal{S}(\mathbb{R}^d)$, i.e. the Schwartz class.

Now, we recall some basic notation from time-frequency analysis that we will use in the paper. The \emph{translation, modulation}, and \emph{phase-shift operators} are denoted, for $z=(x,\xi)\in\mathbb{R}^{2d}$, by
$$ T_xf(t) = f(t-x), \qquad \ M_{\xi}f(t) = e^{it \cdot \xi} f(t), \qquad \ \Pi(z)f(t) = M_{\xi} T_x f(t) = e^{it\cdot \xi} f(t-x). $$
Given  $\psi \in \mathcal{S}_{\omega}(\mathbb{R}^d) \setminus \{0\}$, that we will call \emph{window}, the \emph{short-time Fourier transform} of $f \in \mathcal{S}'_{\omega}(\mathbb{R}^d)$ (or simply STFT) is denoted, for $z=(x,\xi) \in \mathbb{R}^{2d}$, by
$$ V_{\psi}f(z) := \langle f, \Pi(z)\psi \rangle = \int_{\mathbb{R}^d} f(y) \overline{\psi(y-x)} e^{-iy\cdot \xi} dy.$$
Here the bracket $\langle \cdot, \cdot \rangle$ is consistent with the inner product in $L^2$, since we consider distributions as conjugate-linear functionals. 

By \cite[Theorem 2.7]{GZ}, given any $\psi \in \mathcal{S}_{\omega}(\mathbb{R}^d) \setminus \{0\}$, we have that $f \in \mathcal{S}'_{\omega}(\mathbb{R}^d)$ belongs to $\mathcal{S}_{\omega}(\mathbb{R}^d)$ if and only if, for any $\lambda>0$,
$$ p_\lambda(f):=\sup_{z \in \mathbb{R}^{2d}} |V_{\psi}f(z)| e^{\lambda\omega(z)} < +\infty.$$
Moreover, the family of seminorms $\{p_\lambda \}_{\lambda>0}$ defines a fundamental system of seminorms for $\mathcal{S}_{\omega}(\mathbb{R}^d)$ \cite[Proposition 2.10]{Gabor} (see \cite[Corollary 11.2.6]{G-Found} for polynomial weights). See also~\cite[Theorem 9]{Asensio-Wigner}. If $\mathcal{I}f(t) = f(-t)$ denotes the reflection operator of any function $f\in L^2(\R^d)$, the \emph{cross-Wigner transform $\Wig(g,f)$}  of two functions $f,g\in L^2(\R^d)$ is denoted, for $z=(x,\xi) \in \mathbb{R}^{2d}$, by
\begin{equation}\label{EqWig}
\Wig(g,f)(z) := 2^d e^{2ix\cdot \xi} \langle g, \Pi(2z) \mathcal{I}f \rangle = \int_{\mathbb{R}^d} g(x+y/2) \overline{f(x-y/2)} e^{-iy\cdot \xi} dy.
\end{equation}
If $g,f \in \mathcal{S}_{\omega}(\mathbb{R}^d)$, then we have $\Wig(g,f) \in \mathcal{S}_{\omega}(\mathbb{R}^{2d})$ since $\mathcal{S}_{\omega}(\mathbb{R}^{2d})$ is invariant under partial Fourier transform. We write $\Wig f := \Wig(f,f)$. 

We also 
introduce, following~\cite{BuzanoO} (cf.~\cite{BJO-Regularity}), the \emph{Wigner-like transform of $f \in \mathcal{S}_{\omega}(\mathbb{R}^{2d})$} by
\begin{equation}\label{Wigner-like}
\Wig[f](z) := \mathcal{F}_2(\mathcal{T}f)(z) = \int_{\mathbb{R}^d} f(x+y/2,x-y/2) e^{-iy\cdot \xi} dy, \qquad z=(x,\xi)\in\mathbb{R}^{2d}, 
\end{equation} 
where
\begin{equation}\label{EqChangeT}
\mathcal{T}f(x,y) = f(x+y/2,x-y/2) \qquad \qquad \mathcal{T}^{-1}f(x,y) = f\Big(\frac{x+y}{2}, x-y\Big).
\end{equation}
We observe that $\Wig[f]$ is invertible in $\mathcal{S}_{\omega}(\mathbb{R}^{2d})$ and in $\mathcal{S}'_{\omega}(\mathbb{R}^{2d})$ since $\mathcal{T}$ is a linear invertible change of variables, and the inverse is given by $\Wig^{-1}[f] = \mathcal{T}^{-1}\mathcal{F}_2^{-1}(f)$.

For $a \in \mathcal{S}'_{\omega}(\mathbb{R}^{2d})$, the \emph{Weyl operator} $a^w(x,D)$ applied to $f \in \mathcal{S}_{\omega}(\mathbb{R}^d)$ is the distribution defined by 
\begin{equation}\label{Weyl-oper}
	\langle a^w(x,D)f, g \rangle = (2\pi)^{-d} \langle a, \Wig(g,f) \rangle, \qquad g \in \mathcal{S}_{\omega}(\mathbb{R}^d).
\end{equation}  
So, we have that $a^w(x,D): \mathcal{S}_{\omega}(\mathbb{R}^d) \to \mathcal{S}'_{\omega}(\mathbb{R}^d)$. The Weyl operator can be interpreted as
\begin{equation}\label{EqWeylOperator}
a^w(x,D)f (x)= (2\pi)^{-d} \int_{\mathbb{R}^{2d}} e^{i(x-s)\cdot \xi} a\Big(\frac{x+s}{2},\xi\Big) f(s) ds d\xi, \qquad f \in \mathcal{S}_{\omega}(\mathbb{R}^d).
\end{equation}
By \cite[Lemma 3.3]{Asensio}  and \cite[Theorem 3.7]{AJ}, if $a(x,\xi)$ is a function satisfying~\cite[Definition 3.1]{AJ}, then the operator~\eqref{EqWeylOperator} is continuous from $\mathcal{S}_{\omega}(\mathbb{R}^d)$ into itself; see also~\cite[Section 3]{Asensio}.

The \emph{localization operator} with symbol $a\in \mathcal{S}'_{\omega}(\mathbb{R}^{2d})$ and windows $\psi,\gamma \in \mathcal{S}_{\omega}(\mathbb{R}^d) \setminus \{0\}$ is defined as the composition of the following three operators: 
\begin{equation}\label{EqLocMult}
L^a_{\psi, \gamma} = V^{\ast}_{\gamma} \circ {\mathcal M}_a \circ V_{\psi},
\end{equation}
where $V_{\psi}:\mathcal{S}_{\omega}(\mathbb{R}^d)\to \mathcal{S}_{\omega}(\mathbb{R}^{2d}) $ is the short-time Fourier transform operator with respect to the window  $\psi$, ${\mathcal M}_a: \mathcal{S}_{\omega}(\mathbb{R}^{2d})\to \mathcal{S}_{\omega}'(\mathbb{R}^{2d})$ is the \emph{multiplication operator} by the ultradistribution $a\in \mathcal{S}_{\omega}'(\mathbb{R}^{2d})$, i.e. ${\mathcal M}_a(f):=a f,$ for $f\in \mathcal{S}_{\omega}(\mathbb{R}^{2d})$, and $V^{\ast}_{\gamma}:\mathcal{S}_{\omega}'(\mathbb{R}^{2d})\to \mathcal{S}_{\omega}'(\mathbb{R}^{d})$ is the {adjoint  of the short-time Fourier transform} with respect to the window $\gamma$. It is obvious that, if $a \in \mathcal{S}'_{\omega}(\mathbb{R}^{2d})$ and $\psi,\gamma \in \mathcal{S}_{\omega}(\mathbb{R}^d) \setminus \{0\}$, then, by the continuity of the STFT in $\mathcal{S}_{\omega}(\mathbb{R}^d)$~\cite[Prop. 2.9]{Gabor}, the localization operator $L^a_{\psi,\gamma}: \mathcal{S}_{\omega}(\mathbb{R}^d) \to \mathcal{S}'_{\omega}(\mathbb{R}^d)$ is linear and continuous (see also, for example,~\cite[Section 1]{CG}, and the references therein). 
For $a \in \mathcal{S}'_{\omega}(\mathbb{R}^{2d})$, the localization operator can be seen as a Weyl operator via the formula (see, for instance~\cite{CG}, and the references therein)
\begin{equation}\label{EqRelLocWeyl}
L^a_{\psi, \gamma} = (a \ast \Wig(\gamma, \psi))^w(x,D),
\end{equation}
that is, the \emph{Weyl symbol} of $L^a_{\psi,\gamma}$ is given by
$ a \ast \Wig(\gamma, \psi). $

If $E$ and $F$ are two Hausdorff locally convex spaces and $T:E\to F$ is a linear operator, we say that $T$ is \emph{compact} (\emph{bounded}) if there exists a $0$-neighbourhood $U$ in $E$ such that $T(U)$ is relatively compact (bounded) in $F$. Observe that any compact or bounded linear operator $T:E\to F$ is also continuous. When $E=F=\mathcal{S}_{\omega}(\mathbb{R}^d)$, since this space is Fr\'echet-Montel, any bounded linear operator $T:\mathcal{S}_{\omega}(\mathbb{R}^d)\to \mathcal{S}_{\omega}(\mathbb{R}^d)$ is also compact. Our aim is to characterize the compactness of the Weyl operator $a^w(x,D):\mathcal{S}_{\omega}(\mathbb{R}^d)\to \mathcal{S}_{\omega}(\mathbb{R}^d)$, which is equivalent to characterize the boundedness of such operator. 


In the following of this section, we consider a subadditive weight function $\omega$ and the spaces $L^{p,q}_\lambda(\mathbb{R}^{2d})$, for $1 \leq p,q < +\infty$ and $\lambda\in\mathbb{R}$, of measurable functions $F:\mathbb{R}^{2d} \to \mathbb{C}$ such that
$$ \Big( \int_{\mathbb{R}^d} \Big( \int_{\mathbb{R}^d} |F(x,\xi)|^p e^{\lambda p \omega(x,\xi)} dx \Big)^{q/p} d\xi \Big)^{1/q}<+\infty, $$
 with standard extensions when $p=+\infty$ or $q=+\infty$. 
 For $1 \leq p \leq +\infty$, we write $L^p_{\lambda}(\mathbb{R}^{2d})$ for $L^{p,p}_{\lambda}(\mathbb{R}^{2d})$. For instance:
$$ L^{\infty}_\lambda(\mathbb{R}^{2d}) = \Big\{ F \ \text{measurable on} \ \mathbb{R}^{2d} \ : \ \esup_{(x,\xi) \in \mathbb{R}^{2d}} |F(x,\xi)| e^{\lambda \omega(x,\xi)} < +\infty \Big\}. $$ 
Then we denote the corresponding modulation spaces by
$$ M^{p,q}_{\lambda}(\mathbb{R}^d) := \big\{ f \in \mathcal{S}'_{\omega}(\mathbb{R}^d) \ : \ V_{\psi}f \in L^{p,q}_{\lambda}(\mathbb{R}^{2d}) \big\}, $$
endowed with the norm $\norm{f}_{M^{p,q}_{\lambda}(\mathbb{R}^d)} = \norm{V_{\psi}f}_{L^{p,q}_{\lambda}(\mathbb{R}^{2d})}$. 
The modulation spaces do not depend on the choice of the windows $\psi \in \mathcal{S}_{\omega}(\mathbb{R}^d)\setminus\{0\}$, since different windows give equivalent norms~\cite[(3.13)]{Gabor}. 
We refer to~\cite[Section 3]{Gabor} for more information on modulation spaces in this context. 

The following lemma is crucial to characterize the boundedness of the Weyl operator in this setting. The proof is straightforward and can be found in \cite[Lemma 25]{ABR} (see also \cite[Remark 4.3]{Mele}).
\begin{lema}\label{LemmaABR}
Let $X:= \proj_{m} X_m$ and $Y:= \proj_{m} Y_m$ be Fr\'echet spaces such that $X = \cap_{m \in \mathbb{N}} X_m$ with each $(X_m, \norm{\cdot}_{m})$ a Banach space and $Y = \cap_{m \in \mathbb{N}} Y_m$ with each $(Y_m, \norm{\cdot}_{m})$ a Banach space. Moreover,  assume that $X$ is dense in $X_m$ and $X_{m+1} \subseteq X_m$ with continuous inclusion for each $m \in \mathbb{N}$, and $Y_{m+1} \subseteq Y_m$ with continuous inclusion for each $m \in \mathbb{N}$. Let $T:X\to Y$ be a linear operator. We have:
\begin{itemize}
\item[(i)] $T:X\to Y$ is continuous if and only if for each $n \in \mathbb{N}$ there exists $m \in \mathbb{N}$ such that $T$ has a unique continuous linear extension $T_{m,n} : X_m \to Y_n$.
\item[(ii)]  $T:X\to Y$ is  bounded  if and only if there exists $k_0 \in \mathbb{N}$ such that, for every $m \in \mathbb{N}$, $T$ has a unique continuous linear extension $T_{k_0, m}: X_{k_0} \to Y_m$.
\end{itemize}
\end{lema}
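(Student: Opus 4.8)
The plan is to translate both statements into estimates between the defining seminorms of the two Fr\'echet spaces and then invoke an extension-by-density argument. Throughout, I use that the continuity of the inclusions $X_{m+1}\subseteq X_m$ and $Y_{m+1}\subseteq Y_m$ makes the families $\{\|\cdot\|_m\}$ essentially increasing, that a base of $0$-neighbourhoods in $X$ (resp.\ $Y$) is furnished by the $\|\cdot\|_m$-balls, and that each $\|\cdot\|_m$, being the norm of the Banach space $X_m\supseteq X$, restricts to a genuine norm (not merely a seminorm) on $X$.

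For part (i) I would first treat the direction ($\Rightarrow$). If $T\colon X\to Y$ is continuous, then for each $n$ the composition of $T$ with the inclusion $Y\hookrightarrow Y_n$ is a continuous linear map from the Fr\'echet space $X$ into the Banach space $Y_n$; the standard characterization of continuity then produces $m\in\mathbb{N}$ and $C>0$ with $\|Tx\|_n\le C\|x\|_m$ for all $x\in X$. Since $X$ is dense in $X_m$ and $Y_n$ is complete, the bounded-linear-transformation extension theorem gives a unique continuous linear extension $T_{m,n}\colon X_m\to Y_n$. For the converse ($\Leftarrow$), the estimate $\|T_{m,n}x\|_n\le C\|x\|_m$, restricted to $x\in X$ where $T_{m,n}x=Tx$, is exactly the seminorm inequality that certifies continuity of $T\colon X\to Y$, because the $n$-th seminorm on $Y$ is the restriction of the $Y_n$-norm.

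For part (ii) I would recall that $T\colon X\to Y$ is bounded precisely when a single $0$-neighbourhood---hence a single index $k_0$ and radius $\varepsilon>0$---has image $T(\{x\in X:\|x\|_{k_0}<\varepsilon\})$ bounded in $Y$, i.e.\ bounded in every $\|\cdot\|_m$. A homogeneity (scaling) argument upgrades each such bound to an estimate $\|Tx\|_m\le C_m\|x\|_{k_0}$ for all $x\in X$, the decisive point being that $k_0$ does not depend on $m$. Fixing this $k_0$ and running the same density--completeness extension as in (i), now for each $m$, yields the unique continuous extensions $T_{k_0,m}\colon X_{k_0}\to Y_m$; conversely these extensions return the uniform-$k_0$ family of estimates, from which boundedness of $T$ on the $\|\cdot\|_{k_0}$-ball follows. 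Thus (i) and (ii) differ only in the quantifier order, ``$\forall n\,\exists m$'' versus ``$\exists k_0\,\forall m$''.

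The main obstacle is not a single deep step but careful bookkeeping: checking that each $T_{m,n}$ (resp.\ $T_{k_0,m}$) is genuinely \emph{the} unique continuous extension, which rests on density of $X$ in each $X_m$ together with completeness of each $Y_n$, and, in (ii), the scaling argument converting ``bounded on a neighbourhood'' into a clean seminorm estimate while preserving the uniformity of $k_0$. These points are routine once the quantifier structure is made explicit, which is why the statement is described as straightforward.
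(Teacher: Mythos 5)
Your proof is correct. The paper does not actually write out an argument for this lemma---it defers to \cite[Lemma 25]{ABR} with the remark that the proof is straightforward---and your density-plus-completeness extension argument, together with the quantifier bookkeeping distinguishing continuity ($\forall n\,\exists m$) from boundedness ($\exists k_0\,\forall m$), is exactly the standard proof that reference carries out.
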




The modulation spaces $M^{p,q}_{\lambda}(\mathbb{R}^d)$ are  Banach spaces ($\lambda \in \mathbb{R}$, $1 \leq p,q \leq +\infty$) (see \cite[Theorem 11.3.5]{G-Found} for the case of polynomial weights; the proof is similar in the present context). We also know that (see \cite[Remark 3.6]{Gabor} and \cite[Theorem $2.5(h)'$]{RealPW})
$$ \mathcal{S}_{\omega}(\mathbb{R}^d) = \bigcap_{\lambda>0} M^{\infty}_{\lambda}(\mathbb{R}^d)=\bigcap_{\lambda>0} M^{p,q}_{\lambda}(\mathbb{R}^d).$$
We have that $\mathcal{S}_{\omega}(\mathbb{R}^d)$ is a dense subspace of $M^{p,q}_{\lambda}(\mathbb{R}^d)$ for $1 \leq p,q<+\infty$ and $\lambda \in\mathbb{R}$~\cite[Proposition 3.9]{Gabor}.  It is easy to check that the inclusion $M^{p,q}_{\lambda}(\mathbb{R}^d) \hookrightarrow M^{p,q}_{\mu}(\mathbb{R}^d)$ is continuous for all $\lambda>\mu$ and $1 \leq p,q \leq +\infty$. Hence, the last intersections for $\lambda>0$ can be replaced by the intersections for $m\in \mathbb{N}$.
Since we can describe the space $\mathcal{S}_{\omega}(\mathbb{R}^d)$ with fundamental systems of seminorms with $L^{p,q}$-norms of the STFT (\cite[Proposition 2.10]{Gabor} and \cite[Theorem 2.5]{RealPW}) we have 
$$ \mathcal{S}_{\omega}(\mathbb{R}^d) = \proj_{m \in \mathbb{N}} M^{p,q}_{m}(\mathbb{R}^d). $$
Now we easily deduce the following result from Lemma~\ref{LemmaABR}:

\begin{prop}\label{LemmaKEY}
Let $\omega$ be subadditive and $T:\mathcal{S}_{\omega}(\mathbb{R}^d) \to \mathcal{S}_{\omega}(\mathbb{R}^d)$ be a linear operator, and let $1 \leq p,q < +\infty$ and $1 \leq \bar{p}, \bar{q} \leq +\infty$.
\begin{enumerate}
\item[(i)] $T$ is continuous if and only if for every $n \in \mathbb{N}$ there is $m \in \mathbb{N}$ such that the operator has a unique continuous linear extension $T_{n,m} : M^{p,q}_{m}(\mathbb{R}^d) \to M^{\bar{p}, \bar{q}}_{n}(\mathbb{R}^d)$.

\item[(ii)] $T$ is compact if and only if there exists $k_0 \in \mathbb{N}$ such that for all $m \in \mathbb{N}$ the operator has a unique  continuous linear extension $T_{k_0, m} : M^{p,q}_{k_0}(\mathbb{R}^d) \to M^{\bar{p},\bar{q}}_{m}(\mathbb{R}^d)$.
\end{enumerate}
\end{prop}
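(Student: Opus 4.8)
The plan is to read Proposition~\ref{LemmaKEY} as a direct instance of Lemma~\ref{LemmaABR}, once the domain and codomain copies of $\mathcal{S}_\omega(\mathbb{R}^d)$ are presented as the appropriate reduced projective limits of modulation Banach spaces. Specifically, I would set
\[
X:=\proj_{m\in\mathbb{N}}X_m,\quad X_m:=M^{p,q}_m(\mathbb{R}^d),\qquad Y:=\proj_{m\in\mathbb{N}}Y_m,\quad Y_m:=M^{\bar p,\bar q}_m(\mathbb{R}^d),
\]
so that, as Fr\'echet spaces, $X=Y=\mathcal{S}_\omega(\mathbb{R}^d)$, and view the given $T$ as a linear operator $T:X\to Y$. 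The whole argument then consists in checking that this pair $(X,Y)$ satisfies the hypotheses of Lemma~\ref{LemmaABR}, and in translating its two conclusions.

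Next I would verify the hypotheses, all of which have been recorded in the preliminaries. Each $M^{p,q}_m(\mathbb{R}^d)$ and each $M^{\bar p,\bar q}_m(\mathbb{R}^d)$ is a Banach space; the identities $\mathcal{S}_\omega(\mathbb{R}^d)=\bigcap_{m\in\mathbb{N}}M^{p,q}_m(\mathbb{R}^d)=\bigcap_{m\in\mathbb{N}}M^{\bar p,\bar q}_m(\mathbb{R}^d)$ hold, so $X=\bigcap_m X_m$ and $Y=\bigcap_m Y_m$; and the continuity of the inclusion $M^{r,s}_\lambda(\mathbb{R}^d)\hookrightarrow M^{r,s}_\mu(\mathbb{R}^d)$ for $\lambda>\mu$ gives $X_{m+1}\subseteq X_m$ and $Y_{m+1}\subseteq Y_m$ with continuous inclusions. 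The one hypothesis that must be placed with care is the density of $X$ in each $X_m$: this is precisely the density of $\mathcal{S}_\omega(\mathbb{R}^d)$ in $M^{p,q}_m(\mathbb{R}^d)$, which holds exactly because $1\le p,q<+\infty$, and this is the sole reason the source exponents are required to be finite. By contrast, Lemma~\ref{LemmaABR} asks for no density of $Y$ in $Y_m$, so the target exponents are free to range over $1\le\bar p,\bar q\le+\infty$, which explains the asymmetry in the statement.

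With the hypotheses in place, part~(i) is exactly conclusion~(i) of Lemma~\ref{LemmaABR}, up to the cosmetic relabeling of the two indices of the extension $T_{n,m}$. For part~(ii), conclusion~(ii) of Lemma~\ref{LemmaABR} characterizes the \emph{boundedness} of $T:\mathcal{S}_\omega(\mathbb{R}^d)\to\mathcal{S}_\omega(\mathbb{R}^d)$ in the stated form; since $\mathcal{S}_\omega(\mathbb{R}^d)$ is Fr\'echet--Montel, a linear operator on it is bounded if and only if it is compact, as recalled above, and this last equivalence upgrades the boundedness criterion to the desired compactness criterion. There is no substantial obstacle here: the content lies entirely in recognizing that both copies of $\mathcal{S}_\omega(\mathbb{R}^d)$ admit countable reduced projective representations by the modulation Banach spaces, and in the observation that density is needed only on the domain side (hence finite $p,q$) while $\bar p,\bar q=+\infty$ remains admissible on the codomain.
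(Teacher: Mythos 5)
Your proposal is correct and follows exactly the route the paper intends: the paper derives Proposition~\ref{LemmaKEY} directly from Lemma~\ref{LemmaABR} using the recorded facts that $\mathcal{S}_{\omega}(\mathbb{R}^d)=\proj_{m}M^{p,q}_{m}(\mathbb{R}^d)$ with continuous inclusions, that $\mathcal{S}_{\omega}(\mathbb{R}^d)$ is dense in $M^{p,q}_{m}(\mathbb{R}^d)$ for $1\le p,q<+\infty$, and that boundedness equals compactness on the Fr\'echet--Montel space $\mathcal{S}_{\omega}(\mathbb{R}^d)$. Your explicit remark that density is needed only on the domain side (forcing $p,q<+\infty$ while allowing $\bar p,\bar q=+\infty$) is a correct and welcome clarification of what the paper leaves implicit.
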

If $1\leq p,q < +\infty$, $\lambda \in \mathbb{R}$, then the dual space of $M^{p,q}_{\lambda}(\mathbb{R}^d)$  is $M^{p,q}_{\lambda}(\mathbb{R}^d)'= M^{p',q'}_{-\lambda}(\mathbb{R}^d)$~\cite[Theorem 3.8]{Gabor}, 
where $1/p + 1/p' = 1$ and $1/q + 1/q' = 1$. Moreover, 
given $\psi\in\mathcal{S}_{\omega}(\mathbb{R}^d)\setminus\{0\}$,
the duality is given by
$$ \langle f,h \rangle = \int_{\mathbb{R}^{2d}} V_{\psi} f(z) \overline{V_{\psi}h(z)} dz, $$
for $f \in M^{p,q}_{\lambda}(\mathbb{R}^d)$ and $h \in M^{p',q'}_{-\lambda}(\mathbb{R}^d)$.

\begin{exam}\label{ExamLoc1}
When $a \equiv 1$, then (see for example~\cite[(2.22)]{Gabor}) $L^{a}_{\psi,\gamma} = (2\pi)^d \langle \gamma, \psi \rangle I$, where $I$ is the identity operator on $\mathcal{S}_\omega(\mathbb{R}^d)$, which is continuous. Hence, if $\langle \gamma,\psi \rangle \neq 0$, $L^{a}_{\psi,\gamma}$ cannot be compact.
\end{exam}

\section{Multipliers and convolutors}\label{mult-convo}
We recall the following definition from \cite[Definition 3.1]{AM-Mult}:

\begin{defin}
A function $a\in C^\infty(\R^d)$ is \emph{$\omega$-slowly increasing}, or simply, an \emph{$\omega$-multiplier}   if and only if for all $\lambda>0$ there exist $C_{\lambda}, \mu_{\lambda}>0$ such that
\begin{equation}\label{EqMultDer}
|D^{\alpha} a(x)| \leq C_{\lambda} e^{\lambda \varphi^{\ast}_{\omega}\big(\frac{|\alpha|}{\lambda}\big)} e^{\mu_{\lambda}\omega(x)}, \qquad \alpha \in \mathbb{N}_0^d, \ x \in \mathbb{R}^d.
\end{equation}
	We denote $O_M^{\omega}(\mathbb{R}^d)$ for the space of all {$\omega$-multipliers}. 
\end{defin}
In \cite[Theorem 4.4]{AM-Mult} it is proved that a function $a \in C^{\infty}(\mathbb{R}^d)$ is an $\omega$-multiplier if and only if $af \in \mathcal{S}_{\omega}(\mathbb{R}^d)$ for every $f \in \mathcal{S}_{\omega}(\mathbb{R}^d)$, and also that this is equivalent to the fact that the multiplication operator ${\mathcal M}_a$ is continuous from $\mathcal{S}_{\omega}(\mathbb{R}^d)$ into itself. We also observe that the classical space of multipliers $O_M(\mathbb{R}^d)$ for the Schwartz class  coincides with $O_M^{\omega}(\mathbb{R}^d)$ if $\omega(t)=\log(1+t)$. Now, we give a characterization of the $\omega$-multipliers in terms of the STFT, which generalizes \cite[Proposition 11]{BO}.

\begin{theo}\label{PropCharMultSTFT}
Let $F \in C^{\infty}(\mathbb{R}^d)$ and $0\neq \psi \in \mathcal{S}_{\omega}(\mathbb{R}^d)$. The following assertions are equivalent:
\begin{enumerate}
\item[(1)] $F \in O_M^{\omega}(\mathbb{R}^d)$.
\item[(2)] For all $\lambda>0$ there exist $C_{\lambda}, \mu_{\lambda}>0$ such that
$$ |D^{\alpha}_x D^{\beta}_{\xi} V_{\psi}F(x,\xi)| \leq C_{\lambda} e^{\lambda\varphi^{\ast}_{\omega}\big(\frac{|\alpha|}{\lambda}\big)} e^{\lambda\varphi^{\ast}_{\omega}\big(\frac{|\beta|}{\lambda}\big)} e^{-\lambda\omega(\xi)} e^{\mu_{\lambda}\omega(x)}, \qquad (\alpha,\beta) \in \mathbb{N}_0^{2d}, \ (x,\xi) \in \mathbb{R}^{2d}. $$
\item[(3)] For all $\lambda>0$ there exist $C_{\lambda}, \mu_{\lambda}>0$ such that
$$ |V_{\psi}F(x,\xi)| \leq C_{\lambda} e^{-\lambda\omega(\xi)} e^{\mu_{\lambda}\omega(x)}, \qquad (x,\xi) \in \mathbb{R}^{2d}. $$
\end{enumerate}
\end{theo}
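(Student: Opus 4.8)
The plan is to prove the chain of implications $(1)\Rightarrow(2)\Rightarrow(3)\Rightarrow(1)$. The implication $(2)\Rightarrow(3)$ is immediate by setting $\alpha=\beta=0$, so the work lies in $(1)\Rightarrow(2)$ and $(3)\Rightarrow(1)$, which are two dual computations on the integral representation of $V_\psi F$ and on the STFT inversion formula. Before starting I would isolate two recurring technical ingredients. The first is the Legendre/Young duality between the power scale and the weight $\omega$: since $\varphi_\omega$ is convex, biconjugation gives, for every $\lambda>0$ and $\xi\in\mathbb{R}^d$,
\[
\sup_{s\ge 0}\Big(|\xi|^{s}\,e^{-\lambda\varphi^{\ast}_{\omega}(s/\lambda)}\Big)=e^{\lambda\omega(\xi)},
\]
so that $|\xi|^{s}\le e^{\lambda\varphi^{\ast}_{\omega}(s/\lambda)}e^{\lambda\omega(\xi)}$ and, taking the infimum over $s$, any bound of the form $|h(\xi)|\,|\xi|^{s}\le A\,e^{\lambda\varphi^{\ast}_{\omega}(s/\lambda)}$ valid for all integer $s$ yields $|h(\xi)|\le A\,e^{-\lambda\omega(\xi)}$. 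The passage from integer $s=|\gamma|$ to the continuous supremum costs only a dimensional factor absorbed by $(\gamma)$ and a harmless change of $\lambda$. The second ingredient is the quasi-subadditivity coming from $(\alpha)$, namely $\omega(y)\le L\omega(y-x)+L\omega(x)+L$, which I will use to detach the decay of the window (a function of $y-x$) from the admissible growth in $x$; together with the derivative form of membership $\psi\in\mathcal{S}_\omega$, i.e.\ that for all $\lambda$ there is $C_\lambda$ with $|D^{\delta}\psi(y)|\le C_\lambda e^{\lambda\varphi^{\ast}_{\omega}(|\delta|/\lambda)}e^{-\lambda\omega(y)}$.

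For $(1)\Rightarrow(2)$ I would differentiate
\[
V_{\psi}F(x,\xi)=\int_{\mathbb{R}^d}F(y)\,\overline{\psi(y-x)}\,e^{-iy\cdot\xi}\,dy
\]
under the integral sign. The $x$-derivatives fall on the window, producing $\overline{(D^{\alpha}\psi)(y-x)}$ up to sign, and the $\xi$-derivatives produce the polynomial factor $(-iy)^{\beta}$. To extract decay in $\xi$ I multiply by a monomial $\xi^{\gamma}$ and integrate by parts $|\gamma|$ times in $y$, moving the derivatives onto $F(y)\,\overline{(D^{\alpha}\psi)(y-x)}\,y^{\beta}$. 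A Leibniz expansion then leaves me with integrals of $D^{\delta}F(y)\cdot\overline{(D^{\gamma-\delta+\alpha}\psi)(y-x)}$ times polynomials in $y$; I bound $D^{\delta}F$ by the multiplier estimate \eqref{EqMultDer} and the window derivative by the $\mathcal{S}_\omega$ estimate above, use $(\alpha)$ to split $e^{\mu\omega(y)}\le e^{\mu'\omega(y-x)}e^{\mu'\omega(x)}$ and absorb the growth into the window decay so that the $y$-integral converges and leaves a factor $e^{\mu_\lambda\omega(x)}$. Finally I optimise over $\gamma$ by the first ingredient to convert the resulting $|\xi|^{|\gamma|}$-control into $e^{-\lambda\omega(\xi)}$, while the orders $\alpha$ and $\beta$ assemble into the two Young-conjugate factors $e^{\lambda\varphi^{\ast}_{\omega}(|\alpha|/\lambda)}e^{\lambda\varphi^{\ast}_{\omega}(|\beta|/\lambda)}$, giving $(2)$.

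For $(3)\Rightarrow(1)$ I would run the same estimate backwards through the STFT inversion formula
\[
F(t)=\frac{1}{(2\pi)^{d}\|\psi\|_{2}^{2}}\int_{\mathbb{R}^{2d}}V_{\psi}F(x,\xi)\,e^{it\cdot\xi}\,\psi(t-x)\,dx\,d\xi,
\]
valid for $F\in\mathcal{S}'_{\omega}$, the absolute convergence being guaranteed by $(3)$ since $e^{-\lambda\omega(\xi)}$ is integrable for $\lambda$ large by $(\gamma)$ and $\psi$ decays. Differentiating $D^{\alpha}$ in $t$ and using Leibniz, each term carries a factor $(i\xi)^{\kappa}$ and a window derivative $(D^{\alpha-\kappa}\psi)(t-x)$. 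I bound $|V_\psi F|$ by $(3)$ and the window by its $\mathcal{S}_\omega$ estimate; the $\xi$-integral $\int|\xi|^{|\kappa|}e^{-\lambda\omega(\xi)}\,d\xi$ is controlled, via the first ingredient, by $C\,e^{\lambda'\varphi^{\ast}_{\omega}(|\kappa|/\lambda')}$ after reserving part of the weight for convergence, which produces the $\varphi^{\ast}_{\omega}(|\alpha|/\lambda)$ factor of \eqref{EqMultDer}; the $x$-integral $\int e^{\mu_\lambda\omega(x)}e^{-\lambda'\omega(t-x)}\,dx$ converges and, again by $(\alpha)$, yields the admissible growth $e^{\mu'\omega(t)}$. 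Collecting the terms gives the multiplier bound $(1)$.

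I expect the main obstacle to be the uniform bookkeeping of parameters in both directions: one must let the auxiliary precision (the number of integrations by parts, and the reserved portion of the weight) grow with the target $\lambda$ so that the Young-conjugate factors come out with exactly the arguments $|\alpha|/\lambda$, $|\beta|/\lambda$ rather than a rescaled one, and so that the losses from the integer-to-real passage in the first ingredient and from the $(\alpha)$-splittings are absorbed into the constants $C_\lambda,\mu_\lambda$. To keep this manageable I would state the power-versus-weight conversion once as a self-contained sublemma and invoke it in both implications; the result then specialises, for $\omega(t)=\log(1+t)$, to \cite[Proposition 11]{BO}, and one could alternatively deduce $(1)$ from the multiplication characterisation \cite[Theorem 4.4]{AM-Mult} together with the STFT description of $\mathcal{S}_\omega$, but the direct estimates above are cleaner and self-contained.
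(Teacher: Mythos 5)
Your proposal follows essentially the same route as the paper: the chain $(1)\Rightarrow(2)\Rightarrow(3)\Rightarrow(1)$, with $(1)\Rightarrow(2)$ obtained by differentiating under the integral, integrating by parts in $y$ to generate negative powers of $|\xi|$, and optimising via the Legendre duality between powers and $e^{\lambda\omega}$ (the paper uses powers of the Laplacian $\Delta_y^N$ where you use monomials $\xi^\gamma$, a cosmetic difference), and $(3)\Rightarrow(1)$ obtained from the STFT inversion formula with the same splitting of $\omega$ via condition $(\alpha)$. The technical ingredients you isolate (power-versus-weight conversion, quasi-subadditivity, the $\mathcal{S}_\omega$ window estimates) are exactly the ones the paper draws from \cite[Lemma A.1]{RealPW}, so the proposal is correct and matches the published argument.
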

\begin{proof}
$(1) \Rightarrow (2)$ Let us start assuming that $F \in C^{\infty}(\mathbb{R}^d)$ satisfies the estimate~\eqref{EqMultDer}. We distinguish two cases. First, we consider $|\xi|\ge 1$. We integrate by parts and, using Leibniz rule we obtain, for all $N \in \mathbb{N}_0$ and arbitrary $\alpha,\beta \in \mathbb{N}_0^d$, the following:
\begin{align*}
D^{\alpha}_x D^{\beta}_{\xi} V_{\psi}F(x,\xi) &= \int_{\mathbb{R}^d} D^{\beta}_{\xi} (e^{-iy \cdot \xi}) D^{\alpha}_x \overline{\psi(y-x)} F(y) dy \\
&= \int_{\mathbb{R}^d} (-1)^{|\beta|} e^{-iy \cdot \xi} y^{\beta} D^{\alpha}_x \overline{\psi(y-x)} F(y) dy \\
&= (-1)^{|\beta|} (-1)^N |\xi|^{-2N} \int_{\mathbb{R}^d} \Delta_y^N (e^{-iy \cdot \xi}) y^{\beta} D^{\alpha}_x \overline{\psi(y-x)} F(y) dy \\
&= (-1)^{|\beta|} (-1)^N |\xi|^{-2N} \int_{\mathbb{R}^d} e^{-iy \cdot \xi} \Delta_y^N \big( y^{\beta} D^{\alpha}_x \overline{\psi(y-x)} F(y) \big) dy \\
&= (-1)^{|\beta|} |\xi|^{-2N} \int_{\mathbb{R}^d} e^{-iy \cdot \xi} \sum_{|\nu|=N} \frac{N!}{\nu!} D^{2\nu}_y \big( y^{\beta} D^{\alpha}_x \overline{\psi(y-x)} F(y) \big) dy \\
&= (-1)^{|\beta|} |\xi|^{-2N} \int_{\mathbb{R}^d} e^{-iy \cdot \xi} \sum_{|\nu|=N} \frac{N!}{\nu!} \sum_{h \leq \min\{2\nu, \beta\}} \binom{2\nu}{h} \frac{\beta!}{(\beta-h)!} \times \\
& \qquad \times (-i)^hy^{\beta-h} D^{2\nu-h}_y \big( D^{\alpha}_x \overline{\psi(y-x)} F(y) \big) dy.
\end{align*}
Now, we estimate 
$$ \big| D^{2\nu-h}_y \big( D^{\alpha}_x \overline{\psi(y-x)} F(y) \big) \big| \leq \sum_{\gamma \leq 2\nu - h} \binom{2\nu - h}{\gamma} |D^{\alpha}_x D^{\gamma}_y \overline{\psi(y-x)}| |D^{2\nu-h-\gamma}_y F(y)|. $$
From condition $(\gamma)$ of the weight $\omega$, we fix $a\in\mathbb{R}$ and $b>0$ such that
$$ \omega(t) \geq a+b\log(1+t), \qquad t\geq 0. $$
We can also assume, without loss of generality, that
$\omega(et)\leq L\omega(t)+L$
with the same constant as in condition $(\alpha)$ of the weight (this will be useful in the
following to apply \cite[Lemma~A.1]{RealPW}).

By the assumption on $F$, for all $\lambda>0$ there exist $C_{\lambda}>0$ and $\mu_{\lambda}>0$ such that
$$ |D^{2\nu-h-\gamma}_y F(y)| \leq C_{\lambda} e^{(\lambda+2/b) L^{k+1} \varphi^{\ast}_{\omega}\big(\frac{|2\nu-h-\gamma|}{(\lambda+2/b) L^{k+1}}\big)} e^{\mu_{\lambda}\omega(y)}, $$
where $k \in \mathbb{N}_0$ is such that $2 \sqrt{d} \leq e^k$. We fix $\sigma:=(d+1)/b>0$. By~\cite[Th. 2.4(g)]{RealPW}, with the previous $\lambda>0$ there exists $C'_{\lambda}>0$ satisfying
$$ |D^{\alpha}_x D^{\gamma}_y \overline{\psi(y-x)}| \leq C'_{\lambda} e^{2(\lambda+2/b) L^{k+1} \varphi^{\ast}_{\omega}\big(\frac{|\alpha+\gamma|}{2(\lambda+2/b) L^{k+1}}\big)} e^{-(\mu_{\lambda}+\lambda L+\sigma)L\omega(y-x)}. $$
Since $\omega(t)=o(t)$ when $t\to+\infty$, by~\cite[Lemma A.1(v), (viii) and (ii)]{RealPW}, we have that, for the previous $\lambda>0$, there exists $C''_{\lambda}>0$ with
\begin{align*}
\frac{\beta!}{(\beta-h)!} |y|^{|\beta-h|} &\leq h! \binom{\beta}{h} e^{\lambda L \varphi^{\ast}_{\omega}\big(\frac{|\beta-h|}{\lambda L}\big)} e^{\lambda L \omega( y )-\lambda L\varphi^*(0)} \\
&\leq C''_{\lambda} e^{\lambda L \varphi^{\ast}_{\omega}\big(\frac{|h|}{\lambda L}\big)} 2^{|\beta|} e^{\lambda L \varphi^{\ast}_{\omega}\big(\frac{|\beta-h|}{\lambda L}\big)} e^{\lambda L \omega(y)} \leq C''_{\lambda} e^{ \lambda L} e^{\lambda \varphi^{\ast}_{\omega}\big(\frac{|\beta|}{\lambda}\big)} e^{\lambda L \omega(y)}.
\end{align*}
Then, we estimate $|D^{\alpha}_x D^{\beta}_{\xi} V_{\psi}F(x,\xi)|$ by
\begin{align*}
& C_{\lambda} C'_{\lambda} C''_{\lambda} e^{ \lambda L} |\xi|^{-2N} \sum_{|\nu|=N} \frac{N!}{\nu!} \sum_{h \leq \min\{2\nu, \beta\}} \binom{2\nu}{h} e^{\lambda \varphi^{\ast}_{\omega}\big(\frac{|\beta|}{\lambda}\big)} \times \\
& \qquad \times \sum_{\gamma \leq 2\nu - h} \binom{2\nu - h}{\gamma} e^{2(\lambda+2/b) L^{k+1} \varphi^{\ast}_{\omega}\big(\frac{|\alpha+\gamma|}{2(\lambda+2/b) L^{k+1}}\big) + (\lambda+2/b) L^{k+1} \varphi^{\ast}_{\omega}\big(\frac{|2\nu-h-\gamma|}{(\lambda+2/b) L^{k+1}}\big)} \times \\
& \qquad \times \int_{\mathbb{R}^d} e^{-(\mu_{\lambda} + \lambda L + \sigma)L\omega(y-x)+(\mu_{\lambda}+\lambda L)\omega(y)} dy.
\end{align*}
Again by~\cite[Lemma A.1]{RealPW}
\begin{align*}
& \sum_{\gamma \leq 2\nu - h} \binom{2\nu - h}{\gamma} e^{2(\lambda+2/b) L^{k+1} \varphi^{\ast}_{\omega}\big(\frac{|\alpha+\gamma|}{2(\lambda+2/b) L^{k+1}}\big) + (\lambda+2/b) L^{k+1} \varphi^{\ast}_{\omega}\big(\frac{|2\nu-h-\gamma|}{(\lambda+2/b) L^{k+1}}\big)} \\
& \quad \leq e^{\lambda \varphi^{\ast}_{\omega}\big(\frac{|\alpha|}{\lambda}\big)} e^{(\lambda+2/b) L^{k+1} \varphi^{\ast}_{\omega}\big(\frac{|2\nu-h|}{(\lambda+2/b) L^{k+1}}\big)} 2^{|2\nu-h|} \\
& \quad \leq e^{\lambda \varphi^{\ast}_{\omega}\big(\frac{|\alpha|}{\lambda}\big)} e^{(\lambda+2/b) L^k \varphi^{\ast}_{\omega}\big(\frac{2N}{(\lambda+2/b) L^k}\big)} e^{(\lambda+2/b) L^{k+1}}.
\end{align*}
Since $2\sqrt{d} \leq e^k$, we have
\begin{align*}
\sum_{|\nu|=N} \frac{N!}{\nu!} \sum_{h \leq \min\{2\nu, \beta\}} \binom{2\nu}{h} &= \sum_{|\nu|=N} \frac{N!}{\nu!} \sum_{h_1=0}^{\min\{2\nu_1, \beta_1\}} \binom{2\nu_1}{h_1} \cdots \sum_{h_d=0}^{\min\{2\nu_d, \beta_d\}} \binom{2\nu_d}{h_d} \\
&\leq d^N 2^{2N} \leq e^{2kN},
\end{align*}
and hence, by~\cite[Lemma A.1(iii)]{RealPW},
$$ e^{2kN} e^{(\lambda+2/b) L^k \varphi^{\ast}_{\omega}\big(\frac{2N}{(\lambda+2/b) L^k}\big)} \leq e^{(\lambda+2/b) \varphi^{\ast}_{\omega}\big(\frac{2N}{\lambda+2/b}\big)} e^{k(\lambda+2/b) L^k}. $$
Finally, from condition $(\alpha)$ of the weight $\omega$,
$$ (\mu_{\lambda} + \lambda L+ \sigma)\omega(y) \leq (\mu_{\lambda} + \lambda L + \sigma)L \omega(y-x) + (\mu_{\lambda} + \lambda L + \sigma)L \omega(x) + (\mu_{\lambda} + \lambda L + \sigma)L, $$
and therefore
\begin{equation}\label{EqPropCharMultipliers21}
\begin{split}
& -(\mu_{\lambda} + \lambda L + \sigma)L \omega(y-x) + (\mu_{\lambda} + \lambda L)\omega(y) \\
& \qquad \qquad \leq -\sigma\omega(y) + (\mu_{\lambda} + \lambda L + \sigma)L \omega(x) + (\mu_{\lambda} + \lambda L + \sigma)L.
\end{split}
\end{equation}
Thus, for $C'''_{\lambda} = C_{\lambda} C'_{\lambda} C''_{\lambda} e^{\lambda L +  
k(\lambda+2/b)L^k} e^{(\mu_{\lambda} + \lambda L + \sigma)L} > 0$, we deduce
\begin{align*}
|D^{\alpha}_x D^{\beta}_{\xi} V_{\psi}F(x,\xi)| &\leq C'''_{\lambda} \Big( \int_{\mathbb{R}^d} e^{-\sigma\omega(y)} dy \Big) e^{(\mu_{\lambda} + \lambda L+ \sigma)L \omega(x)} e^{\lambda \varphi^{\ast}_{\omega}\big(\frac{|\alpha|}{\lambda}\big)} e^{\lambda \varphi^{\ast}_{\omega}\big(\frac{|\beta|}{\lambda}\big)} \\
& \qquad \times |\xi|^{-2N} e^{(\lambda+2/b) \varphi^{\ast}_{\omega}\big(\frac{2N}{\lambda+2/b}\big)},
\end{align*}
which is valid for all $N \in \mathbb{N}_0$. Hence,  by~\cite[Lemma A.1(vi)]{RealPW}, since $|\xi|\ge 1$, we obtain
$$ \inf_{N \in \mathbb{N}_0} |\xi|^{-2N} e^{(\lambda+2/b)\varphi^{\ast}_{\omega}\big(\frac{2N}{\lambda+2/b}\big)} \leq e^{-\lambda\omega(\xi)} e^{-a\frac{2}{b}}. $$
Therefore, for $\mu'_{\lambda} = (\mu_{\lambda}+\lambda L+\sigma)L>0$, we conclude
$$ |D^{\alpha}_x D^{\beta}_{\xi} V_{\psi}F(x,\xi)| \leq C'''_{\lambda} e^{-\frac{2a}{b}} \Big( \int_{\mathbb{R}^d} e^{-\sigma\omega(y)} dy \Big) e^{\lambda \varphi^{\ast}_{\omega}\big(\frac{|\alpha|}{\lambda}\big)} e^{\lambda \varphi^{\ast}_{\omega}\big(\frac{|\beta|}{\lambda}\big)} e^{-\lambda\omega(\xi)} e^{\mu'_{\lambda} \omega(x)}. $$
This shows the result for the case $|\xi|\ge 1$  since the integral above converges by condition $(\gamma)$ of the weight function $\omega$ and by the choice of $\sigma$ (see, for instance,~\cite[(2.3)]{RealPW}).

For the case when $|\xi|\le 1$ we can argue similarly, but the proof is much easier. In fact, the integration by parts to obtain the estimate by $e^{-\lambda \omega(\xi)}$ is not needed since, in this case, such an estimate is 
straightforward, because
$1=
e^{\lambda \omega(\xi)} e^{-\lambda \omega(\xi)}\le D_\lambda e^{-\lambda \omega(\xi)}
$, for $D_\lambda:=\max_{|\xi|\le 1}e^{\lambda \omega(\xi)}.$ 

$(2) \Rightarrow (3)$ Trivial.

$(3) \Rightarrow (1)$ If $F$ satisfies $(3)$, we first point out that the inversion formula for the short-time Fourier transform~\cite[Lemma 1.1]{GZ} holds. Indeed, if $\gamma \in \mathcal{S}_{\omega}(\mathbb{R}^d)\setminus\{0\}$, then it is easy to see (using~\cite[Th 2.4]{GZ}) that
$$ \int_{\mathbb{R}^{2d}} V_{\psi}F(z) \Pi(z)\gamma dz $$
defines a convergent integral. Hence, if $\langle \gamma, \psi \rangle \neq 0$ and proceeding as in~\cite[Lemma 1.1]{GZ} we have 
\begin{equation}\label{EqInvSTFTNew}
F = (2\pi)^{-d} \langle \gamma, \psi \rangle^{-1} \int_{\mathbb{R}^{2d}} V_{\psi}F(z) \Pi(z)\gamma dz.
\end{equation}
Now, for $\gamma=\psi$ and denoting $C=(2\pi)^{-d} \norm{\psi}^{-2}_{L^2(\mathbb{R}^d)}>0$,
$$ D^{\alpha} F(y) = C \int_{\mathbb{R}^{2d}} V_{\psi}F(x,\xi) \sum_{\alpha' \leq \alpha} \binom{\alpha}{\alpha'} \xi^{\alpha'} e^{i y \cdot \xi} D^{\alpha-\alpha'}_y \overline{\psi(y-x)} dx d\xi, \quad \alpha \in \mathbb{N}_0^d, \ y \in \mathbb{R}^d. $$
Take $\sigma>0$ as in the proof of $(1) \Rightarrow (2)$. By assumption, for all $\lambda>0$ there exist $C_{\lambda}=C_{\lambda L+\sigma}>0$ and $\mu_{\lambda}=\mu_{\lambda L+\sigma}>0$ such that
$$ |V_{\psi}F(x,\xi)| \leq C_{\lambda} e^{-(\lambda L + \sigma) \omega(\xi)} e^{\mu_{\lambda}\omega(x)}, \qquad x,\xi \in \mathbb{R}^d. $$
For that $\lambda>0$, we use~\cite[Lemma A.1(v)]{RealPW} again as follows:
$$ |\xi^{\alpha'}| \leq |\xi|^{|\alpha'|}  \leq e^{\lambda L \varphi^{\ast}_{\omega}\big(\frac{|\alpha'|}{\lambda L}\big)} e^{\lambda L \omega( \xi )} e^{-\lambda L \varphi^*(0)}. $$
Moreover, since $\psi \in \mathcal{S}_{\omega}(\mathbb{R}^d)$, there exists $C'_{\lambda}=C'_{\lambda L, \mu_{\lambda}+\sigma}>0$ such that
$$ |D^{\alpha-\alpha'}_y \overline{\psi(y-x)}| \leq C'_{\lambda} e^{\lambda L \varphi^{\ast}_{\omega}\big(\frac{|\alpha-\alpha'|}{\lambda L}\big)} e^{-(\mu_{\lambda}+\sigma)L \omega(y-x)}, \qquad\mbox{for}\ \alpha' \leq \alpha. $$
So, the modulus of the derivatives of $F$ is estimated by
\begin{align*}
|D^{\alpha} F(y)| &\leq C C_{\lambda} C'_{\lambda} e^{-\lambda L \varphi^*(0)}\Big( \sum_{\alpha' \leq \alpha} \binom{\alpha}{\alpha'} e^{\lambda L \varphi^{\ast}_{\omega}\big(\frac{|\alpha'|}{\lambda L}\big)} e^{\lambda L \varphi^{\ast}_{\omega}\big(\frac{|\alpha - \alpha'|}{\lambda L}\big)} \Big) \times \\
& \quad \times \int_{\mathbb{R}^{2d}} e^{-\sigma\omega(\xi)} e^{\mu_{\lambda} \omega(x)} e^{-(\mu_{\lambda}+\sigma)L\omega(y-x)} dxd\xi.
\end{align*}
Now,~\cite[Lemma A.1(ii)]{RealPW} yields
$$ \sum_{\alpha' \leq \alpha} \binom{\alpha}{\alpha'} e^{\lambda L \varphi^{\ast}_{\omega}\big(\frac{|\alpha'|}{\lambda L}\big)} e^{\lambda L \varphi^{\ast}_{\omega}\big(\frac{|\alpha - \alpha'|}{\lambda L}\big)} \leq e^{\lambda \varphi^{\ast}_{\omega}\big(\frac{|\alpha|}{\lambda}\big)} e^{\lambda L}. $$
As in~\eqref{EqPropCharMultipliers21}, we obtain
$$ -(\mu_{\lambda}+\sigma)L \omega(y-x) + \mu_{\lambda} \omega(x) \leq -\sigma\omega(x) + (\mu_{\lambda}+\sigma)L \omega(y) + (\mu_{\lambda}+\sigma)L. $$
Thus, the integral converges and we conclude that for all $\lambda>0$ there exist $\mu'_{\lambda}=(\mu_{\lambda}+\sigma)L>0$ and $C''_{\lambda}>0$ 
such that
$$ |D^{\alpha} F(y)| \leq C''_{\lambda} e^{\lambda \varphi^{\ast}_{\omega}\big(\frac{|\alpha|}{\lambda}\big)} e^{\mu'_{\lambda}\omega(y)}, \qquad \alpha \in \mathbb{N}_0^d, \  y \in \mathbb{R}^d. $$
\end{proof}
Notice that we have shown that the inversion formula for the short-time Fourier transform~\eqref{EqInvSTFTNew} holds for $F \in O_M^{\omega}(\mathbb{R}^d)$.

Now, we turn our attention into the space of \emph{$\omega$-convolutors in $\mathcal{S}_{\omega}(\mathbb{R}^d)$}, denoted by $(O^{\omega}_C)'(\mathbb{R}^d)$, consisting of those $a \in \mathcal{S}'_{\omega}(\mathbb{R}^d)$ such that $a \ast f \in \mathcal{S}_{\omega}(\mathbb{R}^d)$ for every $f \in \mathcal{S}_{\omega}(\mathbb{R}^d)$. This class has been studied in~\cite{AM}. Again, we write $(O_C)'(\mathbb{R}^d) = (O_C^{\omega})'(\mathbb{R}^d)$ when $\omega(t)=\log(1+t)$.

We give a characterization of the $\omega$-convolutors in terms of the STFT, extending the result given for $\mathcal{S}(\mathbb{R}^d)$ in~\cite[Proposition 10]{BO} to the ultradifferentiable setting. To this aim, we use the characterization given for the $\omega$-multipliers in Theorem~\ref{PropCharMultSTFT}. 
\begin{theo}\label{TheoCharConvSTFT}
For $a \in \mathcal{S}'_{\omega}(\mathbb{R}^d)$ and $0 \neq \psi \in \mathcal{S}_{\omega}(\mathbb{R}^d)$, it is equivalent:
\begin{enumerate}
\item[(1)] $a \in (O_C^{\omega})'(\mathbb{R}^d)$.
\item[(2)] For every $\mu>0$ there exist $C_{\mu}, \lambda_{\mu}>0$ such that
$$ |D^{\alpha}_x D^{\beta}_{\xi} V_{\psi}a(x,\xi)| \leq C_{\mu} e^{\mu \varphi^{\ast}_{\omega}\big(\frac{|\alpha|}{\mu}\big)} e^{\mu \varphi^{\ast}_{\omega}\big(\frac{|\beta|}{\mu}\big)} e^{-\mu\omega(x)} e^{\lambda_{\mu}\omega(\xi)}, \qquad (\alpha,\beta) \in \mathbb{N}_0^{2d}, \ (x,\xi) \in \mathbb{R}^{2d}. $$
\item[(3)] For every $\mu>0$ there exist $C_{\mu}, \lambda_{\mu}>0$ such that
$$ |V_{\psi}a(x,\xi)| \leq C_{\mu} e^{-\mu\omega(x)} e^{\lambda_{\mu}\omega(\xi)}, \qquad (x,\xi) \in \mathbb{R}^{2d}. $$
\end{enumerate}
\end{theo}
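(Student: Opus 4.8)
The plan is to reduce the whole statement to the characterization of $\omega$-multipliers in Theorem~\ref{PropCharMultSTFT} via the Fourier transform. The key point is that $\mathcal{F}$ exchanges convolution and multiplication: for $a\in\mathcal{S}'_{\omega}(\mathbb{R}^d)$ and $f\in\mathcal{S}_{\omega}(\mathbb{R}^d)$ one has $\widehat{a\ast f}=\widehat{a}\,\widehat{f}$. Since $\mathcal{S}_{\omega}(\mathbb{R}^d)$ is invariant under $\mathcal{F}$ and $\widehat{f}$ ranges over all of $\mathcal{S}_{\omega}(\mathbb{R}^d)$ as $f$ does, the requirement $a\ast f\in\mathcal{S}_{\omega}(\mathbb{R}^d)$ for every $f$ is equivalent to $\widehat{a}\,g\in\mathcal{S}_{\omega}(\mathbb{R}^d)$ for every $g\in\mathcal{S}_{\omega}(\mathbb{R}^d)$; by \cite[Theorem 4.4]{AM-Mult} this means precisely that $\widehat{a}\in O_M^{\omega}(\mathbb{R}^d)$ (note that such an $\widehat{a}$ is then automatically a smooth multiplier). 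Hence $a\in(O_C^{\omega})'(\mathbb{R}^d)$ if and only if $\widehat{a}\in O_M^{\omega}(\mathbb{R}^d)$, and the statement should follow by applying Theorem~\ref{PropCharMultSTFT} to $F=\widehat{a}$ with the window $\widehat{\psi}\in\mathcal{S}_{\omega}(\mathbb{R}^d)\setminus\{0\}$ and translating the resulting bounds on $V_{\widehat{\psi}}\widehat{a}$ into bounds on $V_{\psi}a$.

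The bridge between the two short-time Fourier transforms is the fundamental identity of time-frequency analysis. Using Parseval's formula together with $\widehat{M_{\xi}T_x\psi}=e^{ix\cdot\xi}M_{-x}T_{\xi}\widehat{\psi}$, a direct computation yields
$$ V_{\psi}a(x,\xi)=(2\pi)^{-d}e^{-ix\cdot\xi}\,V_{\widehat{\psi}}\widehat{a}(\xi,-x),\qquad (x,\xi)\in\mathbb{R}^{2d}, $$
so that $|V_{\psi}a(x,\xi)|=(2\pi)^{-d}|V_{\widehat{\psi}}\widehat{a}(\xi,-x)|$: the time and frequency variables are interchanged, and the sign is harmless since $\omega$ is radial. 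Inserting this into condition $(3)$ of Theorem~\ref{PropCharMultSTFT} for $\widehat{a}$, namely $|V_{\widehat{\psi}}\widehat{a}(u,v)|\le C_{\lambda}e^{-\lambda\omega(v)}e^{\mu_{\lambda}\omega(u)}$, and setting $u=\xi$, $v=-x$ turns the decay in the second slot into decay in $x$ and the growth in the first slot into growth in $\xi$; this is exactly condition $(3)$ of the present theorem (with $\mu=\lambda$ and $\lambda_{\mu}=\mu_{\lambda}$). This settles $(1)\Leftrightarrow(3)$, and $(2)\Rightarrow(3)$ is immediate.

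It remains to recover condition $(2)$, which is where the only genuine work lies. Differentiating the fundamental identity and applying the Leibniz rule, $D^{\alpha}_x D^{\beta}_{\xi}V_{\psi}a(x,\xi)$ becomes a finite sum of terms in which a derivative of the phase $e^{-ix\cdot\xi}$ — a polynomial in $(x,\xi)$ of degree at most $|\alpha'|+|\beta'|$ times $e^{-ix\cdot\xi}$ — multiplies a mixed derivative $(D_1^{\beta-\beta'}D_2^{\alpha-\alpha'}V_{\widehat{\psi}}\widehat{a})(\xi,-x)$, where $D_1,D_2$ denote the derivatives in the first and second slots. Condition $(2)$ of Theorem~\ref{PropCharMultSTFT} for $\widehat{a}$ bounds each such mixed derivative by $C_{\lambda}e^{\lambda\varphi^{\ast}_{\omega}(|\beta-\beta'|/\lambda)}e^{\lambda\varphi^{\ast}_{\omega}(|\alpha-\alpha'|/\lambda)}e^{-\lambda\omega(x)}e^{\mu_{\lambda}\omega(\xi)}$. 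The main obstacle is then purely computational: one must absorb the polynomial factors coming from the phase and reassemble the index-dependent factors. Powers of $|\xi|$ and $|x|$ are absorbed into $e^{\mu_{\lambda}\omega(\xi)}$ and $e^{-\lambda\omega(x)}$ respectively by the Young-type estimates of \cite[Lemma A.1]{RealPW}, at the cost of enlarging the growth constant and slightly lowering the decay rate; since $\lambda>0$ was arbitrary this is harmless, and the extra $\varphi^{\ast}_{\omega}$-factors so produced are consolidated, together with the ones above, into $e^{\mu\varphi^{\ast}_{\omega}(|\alpha|/\mu)}e^{\mu\varphi^{\ast}_{\omega}(|\beta|/\mu)}$ by the same convexity estimates, exactly as in the proof of $(1)\Rightarrow(2)$ of Theorem~\ref{PropCharMultSTFT}. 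This gives condition $(2)$ and closes the chain of equivalences.
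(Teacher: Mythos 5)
Your proof is correct and follows essentially the same route as the paper: both reduce the statement to Theorem~\ref{PropCharMultSTFT} via the equivalence $a\in(O_C^{\omega})'(\mathbb{R}^d)\Leftrightarrow\widehat{a}\in O_M^{\omega}(\mathbb{R}^d)$ and the fundamental identity relating $V_{\psi}a$ to $V_{\widehat{\psi}}\widehat{a}$ with the variables swapped. If anything you are more careful than the paper, which simply asserts $|D^{\alpha}_x D^{\beta}_{\xi} V_{\widehat{\psi}}\widehat{a}(x,\xi)| = |D^{\alpha}_x D^{\beta}_{\xi} V_{\psi}a(-\xi,x)|$ without addressing the polynomial factors produced by differentiating the phase $e^{-ix\cdot\xi}$, whereas you absorb them explicitly via the Leibniz rule and the Young-conjugate estimates.
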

\begin{proof}
$(1) \Leftrightarrow (2)$ By definition of convolutor and since $\mathcal{S}_{\omega}(\mathbb{R}^d)$ is invariant by Fourier transform, we have from~\cite[Kap. I, \textsection{6}]{Fieker} that $a$ is an $\omega$-convolutor if and only if $\widehat{a \ast f} = \widehat{a} \cdot \widehat{f} \in \mathcal{S}_{\omega}(\mathbb{R}^d)$ for every $f \in \mathcal{S}_{\omega}(\mathbb{R}^d)$, i.e., $\widehat{a} \in O_M^{\omega}(\mathbb{R}^d)$, since the Fourier transform is an isomorphism in $\mathcal{S}_{\omega}(\mathbb{R}^d)$. This is equivalent, by Theorem~\ref{PropCharMultSTFT}, to the condition that for all $\lambda>0$ there exist $C_{\lambda},\mu_{\lambda}>0$ such that
$$ |D^{\alpha}_x D^{\beta}_{\xi} V_{\widehat{\psi}}\widehat{a}(x,\xi)| \leq C_{\lambda} e^{\lambda \varphi^{\ast}_{\omega}\big(\frac{|\alpha|}{\lambda}\big)} e^{\lambda\varphi^{\ast}_{\omega}\big(\frac{|\beta|}{\lambda}\big)} e^{-\lambda\omega(\xi)} e^{\mu_{\lambda}\omega(x)}, \qquad (\alpha,\beta) \in \mathbb{N}_0^{2d}, \ (x,\xi) \in \mathbb{R}^{2d}. $$
Since $|D^{\alpha}_x D^{\beta}_{\xi} V_{\widehat{\psi}}\widehat{a}(x,\xi)| = |D^{\alpha}_x D^{\beta}_{\xi} V_{\psi}a(-\xi,x)|$ and the weight $\omega$ is radial, we obtain the inequality on the statement $(2)$. For the proof $(1) \Leftrightarrow (3)$ we can proceed similarly. 
\end{proof}


\section{Compactness of the multiplication operator}\label{SectCompMult}
A particular case of Weyl operator is the multiplication operator (see Section~\ref{Weyl oper} below). In this section,  we show that such an operator  is compact only when it is the multiplication by zero. First, we recall that the \emph{spectrum} of a linear operator $T:E\to E$ defined on a Hausdorff locally convex  space $E$ is the set
$$ \sigma(T; E) := \{ \lambda \in \mathbb{C}: \ \lambda \cdot I - T \  \text{is not invertible} \}. $$


\begin{theo}\label{TheoCompMult}
The multiplication operator ${\mathcal M}_a:\mathcal{S}_{\omega}(\mathbb{R}^d)\to \mathcal{S}_{\omega}(\mathbb{R}^d)$ is compact if and only if $a=0$.
\end{theo}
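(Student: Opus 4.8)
The plan is to prove both implications, the forward one being immediate and the converse resting on the spectral theory of compact operators. If $a=0$ then $\mathcal{M}_a$ is the zero operator, which is trivially compact, so only the converse requires work. Assume $\mathcal{M}_a$ is compact; since compact operators are continuous, \cite[Theorem 4.4]{AM-Mult} gives $a\in O_M^\omega(\R^d)$, so that $\mathcal{M}_a$ is a well-defined continuous endomorphism of $\mathcal{S}_\omega(\R^d)$. Arguing by contradiction, suppose $a\neq 0$ and fix $x_0\in\R^d$ with $c:=a(x_0)\neq 0$.

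First I would locate $c$ in the spectrum. Since $cI-\mathcal{M}_a=\mathcal{M}_{c-a}$ and $(c-a)(x_0)=0$, the operator $\mathcal{M}_{c-a}$ cannot be surjective: choosing any $g\in\mathcal{S}_\omega(\R^d)$ with $g(x_0)\neq 0$, the pointwise identity $(c-a)(x_0)f(x_0)=g(x_0)$ would force $0=g(x_0)$, so no $f\in\mathcal{S}_\omega(\R^d)$ solves $\mathcal{M}_{c-a}f=g$. Hence $cI-\mathcal{M}_a$ is not bijective, and $c\in\sigma(\mathcal{M}_a;\mathcal{S}_\omega(\R^d))$ with $c\neq 0$.

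The key external input is the Riesz--Schauder theory for compact operators on the Fr\'echet--Montel space $\mathcal{S}_\omega(\R^d)$: every nonzero point of the spectrum of a compact endomorphism is an eigenvalue of finite multiplicity. Applying this to $c$, the eigenspace $\ker(cI-\mathcal{M}_a)=\ker\mathcal{M}_{c-a}$ must be finite-dimensional and nonzero. I would then compute this kernel directly: it consists of those $f\in\mathcal{S}_\omega(\R^d)$ that vanish on the open set $\{a\neq c\}$. There are only two possibilities. If $\{a=c\}$ has empty interior, then $\{a\neq c\}$ is dense and, by continuity, every such $f$ is identically $0$, so the kernel is trivial. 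If $\{a=c\}$ has nonempty interior $\Omega$, then, because $\omega$ is non-quasianalytic (condition $(\beta)$), the set of functions in $\mathcal{S}_\omega(\R^d)$ with support contained in $\Omega$ is a nonzero, infinite-dimensional subspace of the kernel. Thus $\dim\ker(cI-\mathcal{M}_a)\in\{0,+\infty\}$, which contradicts the Riesz--Schauder conclusion that it is finite and nonzero. Therefore $\mathcal{M}_a$ cannot be compact unless $a=0$.

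The main obstacle I anticipate is not the elementary spectral computation but pinning down the Riesz--Schauder theorem in the locally convex, non-Banach setting so that its hypotheses match the definition of compactness used here (an operator sending a $0$-neighbourhood to a relatively compact set); one must ensure that on the Fr\'echet--Montel space $\mathcal{S}_\omega(\R^d)$ the nonzero spectral values of such an operator are genuinely eigenvalues of finite multiplicity, and supply an appropriate reference or short argument for this. A secondary point to handle carefully is the nontriviality and infinite-dimensionality of the space of functions in $\mathcal{S}_\omega(\R^d)$ supported in $\Omega$, which is exactly where the non-quasianalyticity of the weight enters. As an alternative route that avoids eigenvalue multiplicities, one could instead observe that $a(\R^d)\subseteq\sigma(\mathcal{M}_a;\mathcal{S}_\omega(\R^d))$: when $a$ is non-constant this range is a connected subset of $\mathbb{C}$ with more than one point, hence uncountable, contradicting the countability of the spectrum of a compact operator, while a nonzero constant $a\equiv c$ gives $\mathcal{M}_a=cI$, which is not compact since the identity on an infinite-dimensional space is not.
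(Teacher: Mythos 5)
Your proposal is correct, and your main argument takes a genuinely different route from the paper, although the ``alternative route'' you sketch at the end is essentially the paper's own proof. The paper only uses the weak half of the Riesz theory for compact operators on locally convex spaces (namely \cite[Ch.~5, Part~2, Th.~4]{Grot}: the spectrum is finite or the closure of a null sequence, hence countable), establishes the same inclusion $a(\mathbb{R}^d)\subseteq\sigma(\mathcal{M}_a;\mathcal{S}_{\omega}(\mathbb{R}^d))$ by the same non-surjectivity argument you give, and then derives a contradiction for non-constant $a$ by applying the intermediate value theorem to $|a|$ on a closed ball, so that the image of $|a|$ contains a nondegenerate interval $[m,M]$ --- an uncountable set inside the spectrum; the constant case is then dismissed because the identity is not compact on an infinite-dimensional space. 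Your main route instead invokes the stronger Riesz--Schauder conclusion that every nonzero spectral value of a compact endomorphism is an eigenvalue of finite multiplicity, and then exploits the dichotomy $\dim\ker(cI-\mathcal{M}_a)\in\{0,+\infty\}$. This is valid: the Fredholm alternative for compact operators does hold in Grothendieck's locally convex setting with the same notion of compactness used here, and the infinite-dimensionality of the kernel in the case where $\{a=c\}$ has nonempty interior follows from non-quasianalyticity (condition~$(\beta)$), which guarantees that $\mathcal{D}_{\omega}(\Omega)\subseteq\mathcal{S}_{\omega}(\mathbb{R}^d)$ contains infinitely many linearly independent bump functions. The trade-off is that your argument relies on a stronger spectral theorem and on non-quasianalyticity of the weight, whereas the paper's argument needs only countability of the spectrum and elementary connectedness, and would survive even in settings without compactly supported test functions; on the other hand, your argument identifies the eigenspaces concretely, which is structurally more informative. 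Both of the points you flag as obstacles (pinning down Riesz--Schauder in the locally convex setting, and the nontriviality of the space of functions supported in $\Omega$) are genuine but standard, and are covered respectively by the Riesz theory in \cite{Grot} and by the non-quasianalyticity assumption built into the definition of the weight.
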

\begin{proof}
We consider $a\in O_M^{\omega}(\mathbb{R}^d)$ (if not, ${\mathcal M}_a$ is not continuous, nor compact by \cite[Theorem~4.4]{AM-Mult}). In particular, $a$ is continuous. First, we check that the image of $a$ is contained in the spectrum of the multiplication operator ${\mathcal M}_a:\mathcal{S}_{\omega}(\mathbb{R}^d)\to \mathcal{S}_{\omega}(\mathbb{R}^d)$, i.e.
\begin{equation}\label{EqDani}
a(\mathbb{R}^d) \subseteq \sigma({\mathcal M}_a; \mathcal{S}_{\omega}(\mathbb{R}^d)).
\end{equation}
Indeed, fix $x \in \mathbb{R}^d$. Then, for each function $g$ in the image of $\{ a(x) I - {\mathcal M}_a \}$, we have  $g(x) = a(x) g(x) - a(x)g(x) = 0$. Since there exist $f \in \mathcal{S}_{\omega}(\mathbb{R}^d)$ such that $f(x) \neq 0$, we obtain that $a(x)I - {\mathcal M}_a$ cannot be surjective. Therefore $a(x) \in \sigma({\mathcal M}_a; \mathcal{S}_{\omega}(\mathbb{R}^d))$.

If ${\mathcal M}_a$ is compact, then by~\cite[Ch. 5, Part 2, Th. 4]{Grot}, the set $\sigma({\mathcal M}_a; \mathcal{S}_{\omega}(\mathbb{R}^d))$ is either finite or the closure of a sequence that tends to zero. We claim that the function $a$ is constant. If not, fix $x_0\in\mathbb{R}^d$ and $\varepsilon>0$ small enough such that $|a|$ is always positive in $\overline{B(x_0,\varepsilon)}$. If $m<M$ are the minimum and maximum value attained by $|a|$ in $\overline{B(x_0,\varepsilon)}$ at $x_1, x_2 \in \mathbb{R}^d$, then  the function
$$ g(\lambda) = |a|((1-\lambda) x_1 + \lambda x_2),\ \ 0\le \lambda \le 1,   $$
satisfies $g([0,1])=[m,M]$ (by the intermediate value theorem), which is contained in the image of the function $|a|$. This is a contradiction with the  inclusion~\eqref{EqDani}. Hence, the multiplication operator becomes the multiplication by a complex number, and is compact only in the case that this complex number  is zero. 
\end{proof}

\begin{cor}
The multiplication operator ${\mathcal M}_a:\mathcal{S}(\mathbb{R}^d) \to \mathcal{S}(\mathbb{R}^d)$ is compact if and only if $a=0$.
\end{cor}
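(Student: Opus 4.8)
The plan is to recognize the classical Schwartz class as the special instance of the scale $\mathcal{S}_\omega$ corresponding to a specific weight, and then simply invoke the already-proved Theorem~\ref{TheoCompMult}. The excerpt records that for $\omega(t)=\log(1+t)$ one has $\mathcal{S}_\omega(\mathbb{R}^d)=\mathcal{S}(\mathbb{R}^d)$, and moreover that the classical multiplier space $O_M(\mathbb{R}^d)$ coincides with $O_M^\omega(\mathbb{R}^d)$ for this choice. Hence the multiplication operator $\mathcal{M}_a$ acting on $\mathcal{S}(\mathbb{R}^d)$ is literally the operator $\mathcal{M}_a$ on $\mathcal{S}_\omega(\mathbb{R}^d)$ for this $\omega$, and the corollary should follow by pure specialization.

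Before specializing, I would verify that $\omega(t)=\log(1+t)$ is admissible, i.e.\ that it is a non-quasianalytic weight function satisfying $(\alpha)$--$(\delta)$. Conditions $(\beta)$ and $(\gamma)$ are immediate; $(\alpha)$ follows from $\log(1+2t)\le\log 2+\log(1+t)$, so that $L=1$ works; and $(\delta)$ follows from a direct second-derivative computation for $\varphi_\omega(t)=\log(1+e^t)$, whose second derivative equals $e^t(1+e^t)^{-2}>0$. I would additionally check the subadditivity condition $(\alpha')$: since $(1+s)(1+t)=1+s+t+st\ge 1+s+t$ for $s,t\ge 0$, taking logarithms yields $\omega(s+t)\le\omega(s)+\omega(t)$. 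This guarantees that every hypothesis invoked by Theorem~\ref{TheoCompMult} is satisfied for this particular weight.

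With admissibility in hand, the conclusion is immediate: Theorem~\ref{TheoCompMult} states that $\mathcal{M}_a$ is compact on $\mathcal{S}_\omega(\mathbb{R}^d)$ if and only if $a=0$, and substituting $\omega(t)=\log(1+t)$ gives precisely the asserted statement for $\mathcal{S}(\mathbb{R}^d)$. There is therefore no genuine obstacle; the only point deserving care is confirming that Theorem~\ref{TheoCompMult} does not secretly require more than this weight provides (for instance subadditivity, which is used elsewhere in the preliminaries for the modulation-space machinery). Inspecting that proof, its ingredients are the multiplier characterization of continuity from \cite[Theorem~4.4]{AM-Mult}, Grothendieck's description of the spectrum of a compact operator, and the intermediate value theorem, none of which appeals to subadditivity. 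In any event, as verified above, $\log(1+t)$ is subadditive, so the corollary holds without further assumptions.
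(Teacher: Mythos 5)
Your proposal is correct and matches the paper's (implicit) argument: the corollary is stated immediately after Theorem~\ref{TheoCompMult} with no separate proof, precisely because it is the specialization to $\omega(t)=\log(1+t)$, for which $\mathcal{S}_\omega(\mathbb{R}^d)=\mathcal{S}(\mathbb{R}^d)$. Your extra verifications (admissibility and subadditivity of $\log(1+t)$, and the check that the proof of Theorem~\ref{TheoCompMult} needs nothing beyond what this weight provides) are sound and simply make explicit what the paper leaves to the reader.
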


Given $a \in (O_C^{\omega})'(\mathbb{R}^d)$, the \emph{convolution operator} ${\mathcal C}_a:\mathcal{S}_{\omega}(\mathbb{R}^d)\to \mathcal{S}_{\omega}(\mathbb{R}^d)$ defined by ${\mathcal C}_a(f):=a\ast f$, for each $f\in \mathcal{S}_{\omega}(\mathbb{R}^d)$, is well defined and continuous. Moreover,  if $\log(t)=o(\omega(t))$ as $t\to\infty$, the converse also holds~\cite[Theorem 5.3]{AM}. Regarding compactness, we have 
\begin{theo}\label{TheoCompConv}
Given $a \in (O_C^{\omega})'(\mathbb{R}^d)$, the convolution operator ${\mathcal C}_a:\mathcal{S}_{\omega}(\mathbb{R}^d) \to \mathcal{S}_{\omega}(\mathbb{R}^d)$ is compact if and only if $a=0$.
\end{theo}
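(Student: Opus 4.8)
The plan is to reduce the statement to the already-established Theorem~\ref{TheoCompMult} by conjugating the convolution operator with the Fourier transform. The starting point is the fact, already exploited in the proof of Theorem~\ref{TheoCharConvSTFT}, that $a \in (O_C^{\omega})'(\mathbb{R}^d)$ if and only if $\widehat{a} \in O_M^{\omega}(\mathbb{R}^d)$, and that $\mathcal{F}$ is a topological isomorphism of $\mathcal{S}_{\omega}(\mathbb{R}^d)$ onto itself. In particular $\mathcal{M}_{\widehat{a}}:\mathcal{S}_{\omega}(\mathbb{R}^d)\to\mathcal{S}_{\omega}(\mathbb{R}^d)$ is a well-defined continuous operator, so that Theorem~\ref{TheoCompMult} applies to it.

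First I would record the intertwining identity. For every $f\in\mathcal{S}_{\omega}(\mathbb{R}^d)$ one has $\widehat{a\ast f}=\widehat{a}\cdot\widehat{f}$, i.e. $\mathcal{F}\circ\mathcal{C}_a=\mathcal{M}_{\widehat{a}}\circ\mathcal{F}$, or equivalently
\begin{equation*}
\mathcal{C}_a=\mathcal{F}^{-1}\circ\mathcal{M}_{\widehat{a}}\circ\mathcal{F}.
\end{equation*}

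Then I would transfer compactness across this conjugation. Since $\mathcal{F}$ is an isomorphism, both $\mathcal{F}$ and $\mathcal{F}^{-1}$ send $0$-neighbourhoods to $0$-neighbourhoods and relatively compact sets to relatively compact sets. Hence, if $\mathcal{M}_{\widehat{a}}$ is compact, say $\mathcal{M}_{\widehat{a}}(V)$ is relatively compact for some $0$-neighbourhood $V$ of $\mathcal{S}_{\omega}(\mathbb{R}^d)$, then $U:=\mathcal{F}^{-1}(V)$ is again a $0$-neighbourhood and $\mathcal{C}_a(U)=\mathcal{F}^{-1}\big(\mathcal{M}_{\widehat{a}}(V)\big)$ is relatively compact; the reverse implication is symmetric. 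Therefore $\mathcal{C}_a$ is compact if and only if $\mathcal{M}_{\widehat{a}}$ is compact.

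Finally, Theorem~\ref{TheoCompMult} yields that $\mathcal{M}_{\widehat{a}}$ is compact if and only if $\widehat{a}=0$, which by injectivity of the Fourier transform is equivalent to $a=0$. There is no serious obstacle here, as everything rests on the Fourier isomorphism and on the multiplication case; the only point requiring a little care is to check that compactness is genuinely preserved under conjugation by a topological isomorphism in the precise sense used in this paper (existence of a single $0$-neighbourhood with relatively compact image), which the neighbourhood bookkeeping above settles.
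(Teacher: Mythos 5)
Your proposal is correct and follows essentially the same route as the paper: both conjugate $\mathcal{C}_a$ with the Fourier isomorphism via the identity $\mathcal{F}\circ\mathcal{C}_a=\mathcal{M}_{\widehat{a}}\circ\mathcal{F}$ and then invoke Theorem~\ref{TheoCompMult}. Your extra bookkeeping on how compactness transfers under conjugation by a topological isomorphism is a point the paper leaves implicit, but it is the same argument.
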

\begin{proof}
Since $\mathcal{F}$ is a topological isomorphism in $\mathcal{S}_{\omega}(\mathbb{R}^d)$, we have ${\mathcal C}_a$ is compact if and only if $\mathcal{F} \circ {\mathcal C}_a$ is also compact. Since $a\in \mathcal{S}_{\omega}'(\mathbb{R}^d)$ (see, for instance, the comments after Theorem 3.2 in \cite{AM}), by~\cite[Kap. I, \textsection{6}, Satz~6.10]{Fieker}, we have $\mathcal{F}({\mathcal C}_a(f)) = {\mathcal M}_{\mathcal{F}(a)}(\widehat{f})$ for every $f\in\mathcal{S}_{\omega}(\mathbb{R}^d)$. Hence, the conclusion follows by Theorem~\ref{TheoCompMult}.
\end{proof}

\section{Continuity and compactness of the Weyl operator}\label{Weyl oper}

We begin with the following observation.

\begin{lema}
	\label{well-def}
	If $X$ is a Fr\'echet space, any continuous linear operator $T:X\to X'$ such that $T(X)\subseteq X$ is continuous as an operator $T:X\to X$.
\end{lema}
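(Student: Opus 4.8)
The plan is to deduce continuity of $T:X\to X$ from the closed graph theorem for Fréchet spaces. By hypothesis $T(X)\subseteq X$, so $T$ restricts to a well-defined linear map between two copies of the same Fréchet space $X$; the only thing to verify is that its graph
$G:=\{(x,Tx):x\in X\}$ is closed in $X\times X$, after which the closed graph theorem (applicable since both the domain and the codomain are the Fréchet space $X$) immediately yields the claim.

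First I would fix the setting that makes the statement meaningful: $X$ embeds continuously and injectively into its strong dual through the canonical map $\iota:X\hookrightarrow X'$ (in the intended application $\iota$ is the inclusion $\mathcal{S}_\omega(\mathbb{R}^d)\hookrightarrow\mathcal{S}'_\omega(\mathbb{R}^d)$), so that the condition $T(X)\subseteq X$ has a precise meaning and the two topologies on the image of $T$ can be compared. Since $X$ is metrizable, it suffices to test closedness of $G$ on sequences. Take $(x_n)_n$ in $X$ with $x_n\to x$ in $X$ and $Tx_n\to y$ in $X$.

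The key step is to compare the two resulting limits inside $X'$. On one hand, the continuity of the original operator $T:X\to X'$ gives $Tx_n\to Tx$ in $X'$. On the other hand, applying the continuous inclusion $\iota$ to the convergence $Tx_n\to y$ in $X$ gives $Tx_n\to y$ in $X'$ as well. Because $X'$ is Hausdorff, limits are unique, so $Tx=y$ as elements of $X'$; the injectivity of $\iota$ then promotes this to $Tx=y$ in $X$. Hence $(x,y)=(x,Tx)\in G$, and $G$ is closed.

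The only delicate point, and the step I would be most careful about, is precisely this interplay between the two topologies: one must keep in mind that convergence in $X$ is strictly stronger than convergence in $X'$ and use the continuity of $\iota$ to transport the $X$-limit $y$ into $X'$, where it can be identified with the $X'$-limit $Tx$. Everything else is a routine invocation of standard Fréchet-space theory, so I expect no further obstacle.
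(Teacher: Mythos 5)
Your proof is correct and follows the same route as the paper, which simply invokes the closed graph theorem for Fr\'echet spaces; you have merely spelled out the standard verification that the graph is closed (via the continuous injection $X\hookrightarrow X'$ and uniqueness of limits). No issues.
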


\begin{proof}
	It is enough to apply the closed graph theorem for Fr\'echet spaces.
\end{proof}
As a consequence, we obtain 
\begin{prop}\label{PropClosedGraph}
Let $a \in \mathcal{S}'_{\omega}(\mathbb{R}^{2d})$. The Weyl operator $a^w(x,D):\mathcal{S}_{\omega}(\mathbb{R}^d)\to\mathcal{S}_{\omega}(\mathbb{R}^d)$ is well defined if and only if it is continuous.
\end{prop}
\begin{proof}
From \eqref{EqWig} it is easy to deduce that the Weyl operator as defined in \eqref{Weyl-oper} is well defined and continuous from $\mathcal{S}_{\omega}(\mathbb{R}^d)$ into $\mathcal{S}'_{\omega}(\mathbb{R}^d)$. 
\end{proof}
Now, we give two elementary, but clarifying examples.
\begin{exam}
(a) If $a=a(x)$ with $a \in \mathcal{S}'_{\omega}(\mathbb{R}^d)$, we have $ a^w(x) = {\mathcal M}_a. $
Indeed, for $f,g \in \mathcal{S}_{\omega}(\mathbb{R}^d)$,
$$ \langle a^w(x)f, g \rangle = (2\pi)^{-d} \langle a \otimes 1, \Wig(g,f) \rangle = (2\pi)^{-d} \langle a(x), \int \Wig(g,f)(x,\xi) d\xi \rangle. $$
From the fact that $\int \widehat{F}(\xi) d\xi = \int e^{i\xi \cdot 0} \widehat{F}(\xi) d\xi = (2\pi)^d F(0)$ for $F \in \mathcal{S}_{\omega}(\mathbb{R}^d)$, we obtain
$$ \int \Wig(g,f)(x,\xi) d\xi = \int \mathcal{F}_{t \mapsto \xi} \left( g\Big(x+\frac{t}{2}\Big) \overline{f\Big(x-\frac{t}{2}\Big)}\right) d\xi = (2\pi)^d g(x) \overline{f(x)}. $$
Therefore,
$$ \langle a^w(x)f, g \rangle = \langle a(x), g(x)\overline{f(x)} \rangle = \langle a(x)f(x), g(x) \rangle, $$
which shows that $a^w(x) = {\mathcal M}_a$. 

So, in this case, the operator $ a^w(x): \mathcal{S}_{\omega}(\mathbb{R}^d) \to \mathcal{S}_{\omega}(\mathbb{R}^d)$ is continuous if and only if $a \in O_M^{\omega}(\mathbb{R}^d)$
and, by Theorem~\ref{TheoCompMult}, 
$ a^w(x): \mathcal{S}_{\omega}(\mathbb{R}^d) \to \mathcal{S}_{\omega}(\mathbb{R}^d)$ is compact if and only if $ a=0. $

(b) Similarly, if we take $a=a(\xi) \in \mathcal{S}'_{\omega}(\mathbb{R}^d)$ it is easy to see that
$ a^w(D)f = \mathcal{F}^{-1}(a) \ast f. $
 Indeed
\begin{align*}
\langle a^w(D)f,g \rangle &= (2\pi)^{-d} \langle 1 \otimes a, \Wig(g,f) \rangle = \langle 1 \otimes \mathcal{F}^{-1}(a), \mathcal{F}^{-1}_2 (\Wig(g,f)) \rangle \\
&= \langle 1_x \otimes \mathcal{F}^{-1}(a)(t), g(x+t/2) \overline{f(x-t/2)} \rangle \\
&= \langle \mathcal{F}^{-1}(a)(t), \int g(x+t/2) \overline{f(x-t/2)} dx \rangle = \langle \mathcal{F}^{-1}(a)(t), \int g(s) \overline{f(s-t)} ds \rangle \\
&= \langle \mathcal{F}^{-1}(a)(t), g \ast \overline{\mathcal{I}f(t)} \rangle = \langle \mathcal{F}^{-1}(a) \ast f, g \rangle.
\end{align*}
This shows, by Theorem~\ref{TheoCompConv}, that
$ a^w(D): \mathcal{S}_{\omega}(\mathbb{R}^d) \to \mathcal{S}_{\omega}(\mathbb{R}^d)$ is compact if and only if $a=0$
and, for weights satisfying $\log(t)=o(\omega(t))$, $t\to\infty$, we have, using~\cite[Theorem~6.1]{AM}, that $a^w(D): \mathcal{S}_{\omega}(\mathbb{R}^d) \to \mathcal{S}_{\omega}(\mathbb{R}^d)$ is continuous if and only if $\mathcal{F}^{-1}(a) \in (O_C^{\omega})'(\mathbb{R}^d)$ if and only if $ a \in O_M^{\omega}(\mathbb{R}^d). $
\end{exam}
In general, the continuity of the Weyl operator in $\mathcal{S}_{\omega}(\mathbb{R}^d)$ is not characterized by the $\omega$-multipliers:
\begin{exam}\label{ExamMultCont}
$(a)$ Consider $a(x,\xi) = \delta_x \otimes a_2(\xi)$ with $a_2 \in (O_C^{\omega})'(\mathbb{R}^d)$. We have
\begin{align*}
\langle a^w(x,D)f, g \rangle &= (2\pi)^{-d} \langle \delta_x \otimes a_2, \Wig(g,f) \rangle \\
&= \langle \delta_x \otimes \mathcal{F}^{-1}(a_2)(t), g(x+t/2) \overline{f(x-t/2)} \rangle \\
&= \langle \mathcal{F}^{-1}(a_2)(t), g(t/2) \overline{f(-t/2)} \rangle \\
&= 2^d \langle \mathcal{F}^{-1}(a_2)(2s), g(s) \overline{\mathcal{I}f(s)} \rangle \\
&= \langle 2^d \mathcal{F}^{-1}(a_2)(2s) \mathcal{I}f(s), g(s) \rangle.
\end{align*}
Therefore,
$$ a^w(x,D)f(t) = 2^d \mathcal{F}^{-1}(a_2)(2t) \mathcal{I}f(t). $$
So, since $a_2 \in (O_C^{\omega})'(\mathbb{R}^d)$, we have $\mathcal{F}^{-1}(a_2) \in O_M^{\omega}(\mathbb{R}^d)$ and hence $a^w(x,D):\mathcal{S}_{\omega}(\mathbb{R}^d) \to \mathcal{S}_{\omega}(\mathbb{R}^d)$ continuously. However, 
$a=\delta_x \otimes a_2(\xi) \notin O_M^{\omega}(\mathbb{R}^{2d})$ for every $0 \neq a_2 \in (O_C^{\omega})'(\mathbb{R}^d)$. 

Very similarly, in the case $a(x,\xi) = a_1(x) \otimes \delta_{\xi}$ with $a_1 \in (O_C^{\omega})'(\mathbb{R}^d)$, it is easy to see, denoting $\Lambda_2a_1(x) = a_1(x/2)$, that $ a^w(x,D)f = \Lambda_2a_1 \ast \mathcal{I}f. $
Indeed,
\begin{align*}
\langle a^w(x,D)f, g \rangle &= (2\pi)^{-d} \langle a_1(x) \otimes \delta_{\xi}, \Wig(g,f) \rangle \\
&= \langle a_1(x) \otimes \mathcal{F}^{-1}(\delta_{\xi}), \mathcal{F}^{-1}_2(\Wig(g,f)) \rangle \\
&= \langle a_1(x) \otimes 1_{t}, g(x+t/2) \overline{f(x-t/2)} \rangle \\
&= \langle a_1(x), \int g(x+t/2) \overline{f(x-t/2)} dt \rangle \\
&= 2^d \langle a_1(x), \int g(s) \overline{f(2x-s)} ds \rangle \\
&= \langle \Lambda_2 a_1(x), \int g(s) \overline{f(x-s)} ds \rangle = \langle \Lambda_2 a_1 \ast \mathcal{I}f, g\rangle,
\end{align*}
from where it follows that $a^w(x,D)f \in \mathcal{S}_{\omega}(\mathbb{R}^d)$ for every $f \in \mathcal{S}_{\omega}(\mathbb{R}^d)$ and $a^w(x,D):\mathcal{S}_{\omega}(\mathbb{R}^d) \to \mathcal{S}_{\omega}(\mathbb{R}^d)$ continuously. However, $a$ is not an $\omega$-multiplier for $0 \neq a_1 \in (O_C^{\omega})'(\mathbb{R}^d)$.

$(b)$ Consider $a(x,\xi) = e^{-2ix\cdot \xi} f(\xi)$ for some $f \in \mathcal{S}_{\omega}(\mathbb{R}^d)\setminus\{0\}$. 
We check that $a \in O_M^{\omega}(\mathbb{R}^{2d})$, i.e. $aF \in \mathcal{S}_{\omega}(\mathbb{R}^{2d})$ for every $F \in \mathcal{S}_{\omega}(\mathbb{R}^{2d})$. In fact, $(aF)(x,\xi) = e^{-2ix\cdot \xi}f(\xi) F(x,\xi) \in \mathcal{S}_{\omega}(\mathbb{R}^{2d})$ if and only if
\begin{align*}
\mathcal{F}_{x \mapsto y} \Big( e^{-2ix\cdot \xi}f(\xi) F(x,\xi) \Big) &= f(\xi) \int e^{-ix\cdot y} e^{-2ix\cdot \xi} F(x,\xi) dx \\
&= f(\xi) \mathcal{F}_1(F)(y+2\xi, \xi)
\end{align*}
belongs to $\mathcal{S}_{\omega}(\mathbb{R}^{2d})$. We have that $G(x,\xi) = \mathcal{F}_1(F)(x+2\xi,\xi) \in \mathcal{S}_{\omega}(\mathbb{R}^{2d})$ since the linear change of coordinates is invertible and $\mathcal{F}_1(F) \in \mathcal{S}_{\omega}(\mathbb{R}^{2d})$. Hence, $G(x,\xi) f(\xi) \in \mathcal{S}_{\omega}(\mathbb{R}^{2d})$, and from this we obtain that $a\in O_M^\omega(\mathbb{R}^{2d})$.

On the other hand, we  see that, in this case,  $a^w(x,D)$ does not take values in 
$\mathcal{S}_{\omega}(\mathbb{R}^d)$. In fact, for $g\in \mathcal{S}_{\omega}(\mathbb{R}^d)$,
\begin{align*}
a^w(x,D)g &= (2\pi)^{-d} \int_{\mathbb{R}^{2d}} e^{i(x-s)\cdot \xi} a\Big(\frac{x+s}{2}, \xi\Big) g(s) ds d\xi \\
&= (2\pi)^{-d} \int_{\mathbb{R}^{2d}} e^{i(x-s)\cdot \xi} e^{-2i \frac{x+s}{2} \cdot \xi} f(\xi) g(s) ds d\xi \\
&= (2\pi)^{-d} \int_{\mathbb{R}^{2d}} e^{-i(2s)\cdot\xi} f(\xi) g(s) ds d\xi \\
&= (2\pi)^{-d} \int_{\mathbb{R}^{2d}} \widehat{f}(2s) g(s) ds = C_g,
\end{align*}
a constant. Since there exist $g \in \mathcal{S}_{\omega}(\mathbb{R}^d)$ satisfying $C_g\neq 0$ (for example, $g(s) = \widehat{f}(2s)$), we have, for such a $g$, that $a^w(x,D)g\notin \mathcal{S}_{\omega}(\mathbb{R}^d)$.
\end{exam}

Since now we deal with modulation spaces with exponential weights that depend on weight functions, we assume that the weight functions $\omega$ are subadditive.  In order to characterize the continuity and compactness of the Weyl operator in $\mathcal{S}_{\omega}(\mathbb{R}^d)$ via Proposition~\ref{LemmaKEY}, we first give a version of~\cite[Prop. 4.7]{ToftCont} (see also~\cite{ToftBarg}) for modulation spaces with exponential weights.


\begin{prop}\label{PropConverse}
Let $\omega$ be a subadditive weight function and assume that $T$ is a linear and continuous map from $\mathcal{S}_{\omega}(\mathbb{R}^{d_1})$ into $\mathcal{S}'_{\omega}(\mathbb{R}^{d_2})$ and fix $\lambda,\mu>0$. The operator $T$ extends (uniquely) to a continuous mapping from $M^1_{\mu}(\mathbb{R}^{d_1})$ into $M^{\infty}_{\lambda}(\mathbb{R}^{d_2})$ if and only if there exists $K \in \mathcal{S}'_{\omega}(\mathbb{R}^{d_2+d_1})$ such that
$$ K \in M^{\infty}_{\lambda \otimes (-\mu)}(\mathbb{R}^{d_2+d_1}), $$
that is, given $\psi \in \mathcal{S}_{\omega}(\mathbb{R}^{d_2+d_1}) \setminus \{0\}$,
\begin{equation}\label{EqEquivalenceKernel}
\esup_{(x,y,\xi,\eta) \in \mathbb{R}^{2d_2+2d_1}} |V_{\psi} K(x,y,\xi,\eta)| e^{\lambda \omega(x,\xi)} e^{-\mu \omega(y,\eta)} < +\infty,
\end{equation}
and satisfying 
\begin{equation}\label{EqPropKernel}
(Tf)(x) = \langle K(x,\cdot), \overline{f} \rangle, \qquad f \in \mathcal{S}_{\omega}(\mathbb{R}^{d_1}).
\end{equation}
\end{prop}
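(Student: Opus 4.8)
The plan is to reduce the statement to a uniform bound on the \emph{Gabor matrix} of $T$ and to recognize that bound as precisely the modulation-space membership of the kernel. First I would invoke the kernel theorem for $\mathcal{S}_{\omega}$ (these spaces are nuclear), which, together with the standing hypothesis that $T:\mathcal{S}_{\omega}(\mathbb{R}^{d_1})\to\mathcal{S}'_{\omega}(\mathbb{R}^{d_2})$ is linear and continuous, produces a unique $K\in\mathcal{S}'_{\omega}(\mathbb{R}^{d_2+d_1})$ satisfying \eqref{EqPropKernel}, with $\langle Tf,g\rangle=\langle K,g\otimes\overline{f}\rangle$ for $f,g\in\mathcal{S}_{\omega}$. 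Hence the only content is to show that $T$ extends continuously to $M^1_{\mu}(\mathbb{R}^{d_1})\to M^{\infty}_{\lambda}(\mathbb{R}^{d_2})$ if and only if this $K$ satisfies \eqref{EqEquivalenceKernel}.

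Since modulation-space norms do not depend on the chosen window, I would compute $V_{\psi}K$ with a tensor window $\psi=\psi_2\otimes\overline{\psi_1}$, where $\psi_1\in\mathcal{S}_{\omega}(\mathbb{R}^{d_1})\setminus\{0\}$ and $\psi_2\in\mathcal{S}_{\omega}(\mathbb{R}^{d_2})\setminus\{0\}$. Using that the phase-shift on $\mathbb{R}^{d_2+d_1}$ factorizes and that $\Pi(y,\eta)\overline{\psi_1}=\overline{\Pi(y,-\eta)\psi_1}$, one obtains the key identity
$$ V_{\psi}K(x,y,\xi,\eta)=\big\langle T\,\Pi(y,-\eta)\psi_1,\ \Pi(x,\xi)\psi_2\big\rangle=V_{\psi_2}\big(T\,\Pi(y,-\eta)\psi_1\big)(x,\xi). $$
Because $\omega$ is radial, condition \eqref{EqEquivalenceKernel} is therefore equivalent to the uniform estimate
$$ \sup_{w_1\in\mathbb{R}^{2d_1},\,w_2\in\mathbb{R}^{2d_2}}\ \big|V_{\psi_2}\big(T\,\Pi(w_1)\psi_1\big)(w_2)\big|\,e^{\lambda\omega(w_2)}\,e^{-\mu\omega(w_1)}<+\infty, $$
which I will call $(\star)$.

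For necessity I would test the extended operator on the time-frequency shifts $\Pi(w_1)\psi_1$. From $|V_{\psi_1}(\Pi(w_1)\psi_1)(z)|=|V_{\psi_1}\psi_1(z-w_1)|$, the subadditivity $\omega(z)\le\omega(z-w_1)+\omega(w_1)$, and $V_{\psi_1}\psi_1\in\mathcal{S}_{\omega}(\mathbb{R}^{2d_1})\subset L^1_{\mu}(\mathbb{R}^{2d_1})$, one gets $\norm{\Pi(w_1)\psi_1}_{M^1_{\mu}}\le C\,e^{\mu\omega(w_1)}$. Then
$$ \sup_{w_2}\big|V_{\psi_2}(T\Pi(w_1)\psi_1)(w_2)\big|e^{\lambda\omega(w_2)}=\norm{T\Pi(w_1)\psi_1}_{M^{\infty}_{\lambda}}\le\norm{T}_{M^1_{\mu}\to M^{\infty}_{\lambda}}\,C\,e^{\mu\omega(w_1)}, $$
which is exactly $(\star)$, hence \eqref{EqEquivalenceKernel}. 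For sufficiency I would start from the STFT inversion formula \cite[Lemma 1.1]{GZ} (equivalently \eqref{EqInvSTFTNew}), writing for $f\in\mathcal{S}_{\omega}(\mathbb{R}^{d_1})$ the representation $f=C\int V_{\psi_1}f(w_1)\,\Pi(w_1)\psi_1\,dw_1$, and pushing it through $T$ and through the functional $\langle\,\cdot\,,\Pi(w_2)\psi_2\rangle$ to obtain $V_{\psi_2}(Tf)(w_2)=C\int V_{\psi_1}f(w_1)\,V_{\psi_2}(T\Pi(w_1)\psi_1)(w_2)\,dw_1$. Assuming $(\star)$ this yields, uniformly in $w_2$,
$$ \big|V_{\psi_2}(Tf)(w_2)\big|\,e^{\lambda\omega(w_2)}\le C'\int_{\mathbb{R}^{2d_1}}\big|V_{\psi_1}f(w_1)\big|\,e^{\mu\omega(w_1)}\,dw_1=C'\,\norm{f}_{M^1_{\mu}}, $$
that is $\norm{Tf}_{M^{\infty}_{\lambda}}\le C'\norm{f}_{M^1_{\mu}}$; since $\mathcal{S}_{\omega}(\mathbb{R}^{d_1})$ is dense in $M^1_{\mu}(\mathbb{R}^{d_1})$ and $M^{\infty}_{\lambda}$ is Banach, $T$ extends uniquely and continuously.

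The main obstacle I anticipate is the interchange in the sufficiency step: one must justify that the vector-valued integral $\int V_{\psi_1}f(w_1)\Pi(w_1)\psi_1\,dw_1$ converges in $\mathcal{S}_{\omega}(\mathbb{R}^{d_1})$, so that the continuous maps $T:\mathcal{S}_{\omega}\to\mathcal{S}'_{\omega}$ and $\langle\,\cdot\,,\Pi(w_2)\psi_2\rangle$ may be pulled inside it. The remaining delicate point is the careful bookkeeping of the two separate phase-space weights $\omega(x,\xi)$ and $\omega(y,\eta)$ through the subadditivity of $\omega$; everything else is routine covariance of the STFT and standard estimates in $\mathcal{S}_{\omega}$.
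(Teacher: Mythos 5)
Your proposal is correct, and its overall architecture (kernel theorem for the nuclear space $\mathcal{S}_{\omega}$, then identification of $V_{\psi}K$ for a tensor window with the action of $T$ on time-frequency shifts, then subadditivity of $\omega$) coincides with the paper's. The necessity half is essentially the paper's argument in a slightly more streamlined form: where you read off $\sup_{w_2}|V_{\psi_2}(T\Pi(w_1)\psi_1)(w_2)|e^{\lambda\omega(w_2)}$ directly as $\norm{T\Pi(w_1)\psi_1}_{M^{\infty}_{\lambda}}$, the paper routes the same information through the duality pairing $|\langle Tf_{y,\eta},g_{x,\xi}\rangle|\le \norm{Tf_{y,\eta}}_{M^{\infty}_{\lambda}}\norm{g_{x,\xi}}_{M^1_{-\lambda}}$ and the shift estimate $\norm{g_{x,\xi}}_{M^1_{-\lambda}}\le e^{-\lambda\omega(x,\xi)}\norm{g}_{M^1_{\lambda}}$; the content is the same. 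The genuine divergence is in the sufficiency half. The paper uses a one-line H\"older-type duality estimate, $|\langle Tf,g\rangle|=|\langle K,g\otimes\overline{f}\rangle|\le \norm{K}_{M^{\infty}_{\lambda\otimes(-\mu)}}\norm{g\otimes\overline{f}}_{M^1_{(-\lambda)\otimes\mu}}\le \norm{K}_{M^{\infty}_{\lambda\otimes(-\mu)}}\norm{g}_{M^1_{-\lambda}}\norm{f}_{M^1_{\mu}}$, which identifies $Tf$ as an element of $M^{\infty}_{\lambda}=(M^1_{-\lambda})'$ with the right norm bound and requires no interchange of limits at all. Your route via the inversion formula and the continuous Gabor-matrix identity $V_{\psi_2}(Tf)(w_2)=C\int V_{\psi_1}f(w_1)\,V_{\psi_2}(T\Pi(w_1)\psi_1)(w_2)\,dw_1$ also works — the interchange you flag is legitimate because for $f\in\mathcal{S}_{\omega}$ the inversion integral converges absolutely in every seminorm of $\mathcal{S}_{\omega}$ (by the rapid decay of $V_{\psi_1}f$, cf.\ the use of \cite[Lemma 1.1]{GZ} elsewhere in the paper), so the continuous functional $u\mapsto\langle Tu,\Pi(w_2)\psi_2\rangle$ passes inside — but it costs you exactly that extra justification, whereas it buys a formulation (boundedness of the Gabor matrix) that generalizes more readily to $M^{p,q}$ targets. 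Both proofs conclude identically by density of $\mathcal{S}_{\omega}(\mathbb{R}^{d_1})$ in $M^1_{\mu}(\mathbb{R}^{d_1})$.
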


\begin{proof}
First, we assume that there exists the extension of  $T: M^1_{\mu}(\mathbb{R}^{d_1})\to M^{\infty}_{\lambda}(\mathbb{R}^{d_2})$. Since $\mathcal{S}_{\omega}(\R^d)$ is nuclear~(see, e.g. \cite{BJOS-2021,BJOS-2020}), there is (a unique) $K \in \mathcal{S}'_{\omega}(\mathbb{R}^{d_2+d_1})$ satisfying~\eqref{EqPropKernel}. We show that the estimate~\eqref{EqEquivalenceKernel} holds. 
From the continuity of the extension of $T$, it follows that there exists $C>0$ 
such that 
\begin{equation}\label{EqKgf}
|\langle K, g \otimes \overline{f} \rangle| = |\langle Tf, g\rangle| \leq \norm{Tf}_{M^{\infty}_{\lambda}(\mathbb{R}^{d_2})} \norm{g}_{M^1_{-\lambda}(\mathbb{R}^{d_2})} \leq C \norm{f}_{M^1_{\mu}(\mathbb{R}^{d_1})} \norm{g}_{M^{1}_{-\lambda}(\mathbb{R}^{d_2})},
\end{equation}
for all $f \in \mathcal{S}_{\omega}(\mathbb{R}^{d_1})$ and $g \in \mathcal{S}_{\omega}(\mathbb{R}^{d_2})$. Consider $\psi = g \otimes \overline{f} \in \mathcal{S}_{\omega}(\mathbb{R}^{d_2+d_1})$. We replace $f$ and $g$ in~\eqref{EqKgf} with
$$ f_{y,\eta}(t) := \Pi(y,-\eta)f(t) = e^{-it\cdot \eta} f(t-y) \quad \text{and} \quad g_{x,\xi}(t) := \Pi(x,\xi)g(t) = e^{it\cdot \xi} g(t-x) $$
and we have 
\begin{align*}
\langle K, g_{x,\xi} \otimes \overline{f_{y,\eta}} \rangle 
&= \int K(s,t) e^{-is\cdot\xi} \overline{g(s-x)} e^{-it\cdot\eta} f(t-y) ds dt \\
&= \int K(s,t) \overline{\psi(s-x,t-y)} e^{-i(s,t)\cdot(\xi,\eta)} ds dt = V_{\psi}K(x,y,\xi,\eta).
\end{align*}
Therefore, from~\eqref{EqKgf}, we obtain
\begin{equation}\label{EqEstKernel}
|V_{\psi}K(x,y,\xi,\eta)| \leq C \norm{f_{y,\eta}}_{M^1_{\mu}(\mathbb{R}^{d_1})} \norm{g_{x,\xi}}_{M^{1}_{-\lambda}(\mathbb{R}^{d_2})}.
\end{equation}
Now, we estimate the right-hand side of~\eqref{EqEstKernel}. First, we compute,
for $\psi_1\in{\mathcal S}_\omega(\R^{d_1})\setminus\{0\}$,
\begin{align*}
V_{\psi_1} f_{y,\eta}(u,v) 
&= \int_{\mathbb{R}^{d_1}} e^{-it\cdot\eta} f(t-y) \overline{\psi_1(t-u)} e^{-it\cdot v} dt \\
&= \int_{\mathbb{R}^{d_1}} e^{-it\cdot(v+\eta)} f(t-y) \overline{\psi_1(t-u)} dt \\
&= e^{-iy\cdot(v+\eta)} V_{\psi_1}f(u-y, v+\eta), \qquad (u,v) \in \mathbb{R}^{2d_1}.
\end{align*}
Therefore
\begin{align*}
\norm{f_{y,\eta}}_{M^1_{\mu}(\mathbb{R}^{d_1})} &= \int_{\mathbb{R}^{2d_1}} |V_{\psi_1}f(u-y,v+\eta)| e^{\mu \omega(u,v)} du dv \\
&= \int_{\mathbb{R}^{2d_1}} |V_{\psi_1}f(u,v)| e^{\mu \omega(u+y, v-\eta)} du dv.
\end{align*}
Since $\mu>0$, then we estimate $\mu \omega(u+y,v-\eta) \leq \mu\omega(u,v) + \mu \omega(y,\eta)$, so we have
$$ \norm{f_{y,\eta}}_{M^1_{\mu}(\mathbb{R}^{d_1})} \leq e^{\mu \omega(y,\eta)} \norm{f}_{M^1_{\mu}(\mathbb{R}^{d_1})}. $$
Thus, as $f \in \mathcal{S}_{\omega}(\mathbb{R}^{d_1})$, there exists $C_1>0$ such that
\begin{equation}\label{Eq1}
\norm{f_{y,\eta}}_{M^1_{\mu}(\mathbb{R}^{d_1})} \leq C_1 e^{\mu \omega(y,\eta)}.
\end{equation}
We proceed similarly for $g_{x,\xi}$: given $\psi_2 \in \mathcal{S}_{\omega}(\mathbb{R}^{d_2}) \setminus \{0\}$, one can see
\begin{align*}
V_{\psi_2} g_{x,\xi}(u,v) 
= e^{-ix\cdot(v-\xi)} V_{\psi_2}g(u-x,v-\xi), \qquad (u,v) \in \mathbb{R}^{2d_2}.
\end{align*}
So, we have
\begin{align*}
\norm{g_{x,\xi}}_{M^1_{-\lambda}(\mathbb{R}^{d_2})} 
&= \int_{\mathbb{R}^{d_2}} |V_{\psi_2}g(u,v)| e^{-\lambda \omega(u+x,v+\xi)} du dv.
\end{align*}
For $\lambda>0$, we have $-\lambda\omega(u+x,v+\xi) \leq -\lambda\omega(x,\xi) + \lambda\omega(u,v)$. Thus, there exists $C_2>0$ such that
\begin{equation}\label{Eq2}
\norm{g_{x,\xi}}_{M^1_{-\lambda}(\mathbb{R}^{d_2})} \leq e^{-\lambda \omega(x,\xi)} \norm{g}_{M^1_{\lambda}(\mathbb{R}^{d_2})} \leq C_2 e^{-\lambda \omega(x,\xi)}.
\end{equation}
Substituting~\eqref{Eq1} and~\eqref{Eq2} into formula~\eqref{EqEstKernel}, we find $C'>0$ such that
$$ |V_{\psi} K(x,y,\xi,\eta)| \leq C' e^{-\lambda \omega(x,\xi)} e^{\mu \omega(y,\eta)}, $$
and hence $K \in M^{\infty}_{\lambda \otimes (-\mu)}(\mathbb{R}^{d_2+d_1})$.


Since $M^1_{\mu}(\mathbb{R}^{d_1})$ is a Banach space and the inclusion $\mathcal{S}_{\omega}(\mathbb{R}^{d_1}) \hookrightarrow M^1_{\mu}(\mathbb{R}^{d_1})$ is dense and continuous, the equality $\langle K, g \otimes \overline{f} \rangle = \langle Tf, g \rangle$ in $\mathcal{S}_{\omega}(\mathbb{R}^{d_1})$ can be extended to the modulation spaces $M^1_{\mu}(\mathbb{R}^{d_1})$ (see for instance the comment in the proof of~\cite[Theorem~14.4.1]{G-Found}). 

Assume now that the kernel $K$ of $T$ satisfies \eqref{EqEquivalenceKernel}. We have
\begin{align*}
|\langle Tf, g \rangle| = |\langle K, g \otimes \overline{f} \rangle| &\leq \norm{K}_{M^{\infty}_{\lambda \otimes (-\mu)}(\mathbb{R}^{d_2+d_1})} \norm{g \otimes \overline{f}}_{M^1_{(-\lambda) \otimes \mu}(\mathbb{R}^{d_2+d_1})} \\
&\leq \norm{K}_{M^{\infty}_{\lambda \otimes (-\mu)}(\mathbb{R}^{d_2+d_1})} \norm{g}_{M^1_{-\lambda}(\mathbb{R}^{d_2})} \norm{f}_{M^1_{\mu}(\mathbb{R}^{d_1})},
\end{align*}
for all $g \in M^1_{-\lambda}(\mathbb{R}^{d_2})$. Then, $Tf \in M^{\infty}_{\lambda}(\mathbb{R}^{d_2})$ and
$$ \norm{Tf}_{M^{\infty}_{\lambda}(\mathbb{R}^{d_2})} \leq \norm{K}_{M^{\infty}_{\lambda \otimes (-\mu)}(\mathbb{R}^{d_2+d_1})} \norm{f}_{M^1_{\mu}(\mathbb{R}^{d_1})} $$
for all $f \in \mathcal{S}_{\omega}(\mathbb{R}^{d_1})$. Hence, $T$ extends to a continuous map from $M^1_{\mu}(\mathbb{R}^{d_1})$ to $M^{\infty}_{\lambda}(\mathbb{R}^{d_2})$, and
$$ \norm{T}_{\op} \leq \norm{K}_{M^{\infty}_{\lambda \otimes (-\mu)}(\mathbb{R}^{d_2+d_1})}. $$
The uniqueness of the extension follows from the density of $\mathcal{S}_{\omega}(\mathbb{R}^{d_1})$ in $M^1_{\lambda}(\mathbb{R}^{d_1})$.
\end{proof}

When the operator $T$ considered in Proposition~\ref{PropConverse} is the Weyl operator $a^w(x,D)$, the characterization given in terms of the kernel can be transferred to the symbol $a$. In fact, the kernel $K$ of the Weyl operator is given in terms of the symbol $a$ by 
(see \eqref{EqWeylOperator}; cf.~\cite{Sh}) 
\begin{equation}\label{EqShubin}
K(x,y) = \mathcal{F}^{-1}_{\xi \mapsto x-y}(a)\Big(\frac{x+y}{2}, \xi \Big).
\end{equation}
From~\eqref{EqChangeT} we observe that $K = \Wig^{-1}[a]$, where $\Wig[\cdot]$ is the Wigner-like transform defined in \eqref{Wigner-like}. Hence, by Propositions~\ref{LemmaKEY}, \ref{PropClosedGraph} and~\ref{PropConverse}:
\begin{theo}\label{TheoCharWeyl}
Let $\omega$ be a subadditive weight function and $a \in \mathcal{S}'_{\omega}(\mathbb{R}^{2d})$.
\begin{itemize}
\item[(i)] The Weyl operator acts $a^w(x,D):\mathcal{S}_{\omega}(\mathbb{R}^d) \to \mathcal{S}_{\omega}(\mathbb{R}^d)$ if and only if for every $\lambda>0$ there exists $\mu>0:$ $\Wig^{-1}[a] \in M^{\infty}_{\lambda \otimes (-\mu)}(\mathbb{R}^{2d})$.
\item[(ii)] The Weyl operator $a^w(x,D):\mathcal{S}_{\omega}(\mathbb{R}^d) \to \mathcal{S}_{\omega}(\mathbb{R}^d)$ is compact if and only if there exists $\mu>0$ such that $\Wig^{-1}[a] \in \bigcap_{\lambda>0} M^{\infty}_{\lambda \otimes (-\mu)}(\mathbb{R}^{2d})$.
\end{itemize}
\end{theo}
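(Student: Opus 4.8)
The plan is to assemble the three propositions cited immediately before the statement, using the identification $K = \Wig^{-1}[a]$ of the distributional kernel of the Weyl operator. Throughout, I rely on the fact, established in the proof of Proposition~\ref{PropClosedGraph} (and on \eqref{EqWig}), that for any $a \in \mathcal{S}'_{\omega}(\mathbb{R}^{2d})$ the Weyl operator is always well defined and continuous as a map $a^w(x,D): \mathcal{S}_{\omega}(\mathbb{R}^d) \to \mathcal{S}'_{\omega}(\mathbb{R}^d)$. This is precisely the standing hypothesis needed to invoke Proposition~\ref{PropConverse} with $d_1 = d_2 = d$ and $T = a^w(x,D)$, whose kernel is $K = \Wig^{-1}[a]$ by \eqref{EqShubin} and the relation $K = \Wig^{-1}[a]$ noted right after it.

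For part (i), I would run the following chain of equivalences. By Proposition~\ref{PropClosedGraph}, the Weyl operator takes values in $\mathcal{S}_{\omega}(\mathbb{R}^d)$ if and only if it is continuous from $\mathcal{S}_{\omega}(\mathbb{R}^d)$ into itself. Applying Proposition~\ref{LemmaKEY}(i) with the choice $p=q=1$ on the domain and $\bar{p}=\bar{q}=\infty$ on the target, this continuity is equivalent to the statement that for every $n \in \mathbb{N}$ there is $m \in \mathbb{N}$ for which $a^w(x,D)$ extends to a continuous operator $M^1_m(\mathbb{R}^d) \to M^{\infty}_n(\mathbb{R}^d)$. Finally, Proposition~\ref{PropConverse}, used with $\lambda = n$ and $\mu = m$, converts the existence of such an extension into the kernel estimate $\Wig^{-1}[a] \in M^{\infty}_{n \otimes (-m)}(\mathbb{R}^{2d})$. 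The passage from integer to arbitrary positive indices is harmless: since the inclusion $M^{\infty}_{\lambda \otimes (-\mu)}(\mathbb{R}^{2d}) \hookrightarrow M^{\infty}_{\lambda' \otimes (-\mu')}(\mathbb{R}^{2d})$ is continuous whenever $\lambda \ge \lambda'$ and $\mu \le \mu'$, the integers are cofinal in both arguments, and the resulting condition is exactly "for every $\lambda>0$ there exists $\mu>0$ with $\Wig^{-1}[a] \in M^{\infty}_{\lambda \otimes (-\mu)}(\mathbb{R}^{2d})$".

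For part (ii), I would repeat the argument with the bounded variant. Since $\mathcal{S}_{\omega}(\mathbb{R}^d)$ is Fr\'echet--Montel, compactness of $a^w(x,D)$ coincides with boundedness, as recorded in the Preliminaries. By Proposition~\ref{LemmaKEY}(ii), boundedness is equivalent to the existence of a single $k_0 \in \mathbb{N}$ such that $a^w(x,D)$ extends continuously to $M^1_{k_0}(\mathbb{R}^d) \to M^{\infty}_m(\mathbb{R}^d)$ for every $m \in \mathbb{N}$; the structural difference with (i) is that the domain index $k_0$ is now uniform in $m$. Applying Proposition~\ref{PropConverse} with $\lambda = m$ and $\mu = k_0$ turns this into $\Wig^{-1}[a] \in M^{\infty}_{m \otimes (-k_0)}(\mathbb{R}^{2d})$ for all $m$, with $k_0$ fixed. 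Writing $\mu = k_0$ and letting $\lambda$ range over all positive reals, again by cofinality, yields precisely $\Wig^{-1}[a] \in \bigcap_{\lambda>0} M^{\infty}_{\lambda \otimes (-\mu)}(\mathbb{R}^{2d})$ for some $\mu>0$.

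The proof is essentially bookkeeping: all the analytic substance resides in Propositions~\ref{LemmaKEY}, \ref{PropClosedGraph} and~\ref{PropConverse}. The only point demanding genuine care is the handling of quantifiers and the direction of the weight inclusions---in (i) the domain weight $\mu$ may depend on the target weight $\lambda$, whereas in (ii) one $\mu$ must serve for all $\lambda$---together with verifying that the output and input halves of the kernel variable in $M^{\infty}_{\lambda \otimes (-\mu)}(\mathbb{R}^{2d})$ align with the ordering convention of Proposition~\ref{PropConverse}. I expect no obstacle beyond this careful matching of indices.
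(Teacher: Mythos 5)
Your proposal follows the paper's proof almost verbatim: both assemble Propositions~\ref{PropClosedGraph}, \ref{LemmaKEY} and~\ref{PropConverse} via the identification $K=\Wig^{-1}[a]$, and your handling of the quantifiers in (i) versus (ii) matches the paper exactly. The one place where your ``chain of equivalences'' is not literally valid is the converse direction of (i): Proposition~\ref{LemmaKEY} (like Lemma~\ref{LemmaABR}) presupposes that $T$ is already given as a linear operator from $\mathcal{S}_{\omega}(\mathbb{R}^d)$ \emph{into} $\mathcal{S}_{\omega}(\mathbb{R}^d)$, so you cannot invoke its ``if'' direction to conclude well-definedness --- that is precisely what is to be proved at that stage. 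The paper closes this by a direct one-line argument: given the kernel condition, Proposition~\ref{PropConverse} yields for every $\lambda>0$ some $\mu>0$ with a continuous extension $M^1_{\mu}(\mathbb{R}^d)\to M^{\infty}_{\lambda}(\mathbb{R}^d)$ agreeing with $a^w(x,D)$ on $\mathcal{S}_{\omega}(\mathbb{R}^d)\subseteq M^1_{\mu}(\mathbb{R}^d)$, whence $a^w(x,D)f\in\bigcap_{\lambda>0}M^{\infty}_{\lambda}(\mathbb{R}^d)=\mathcal{S}_{\omega}(\mathbb{R}^d)$; only after that is Lemma~\ref{LemmaKEY} applicable (and the same remark lets part (ii)'s converse go through, as the paper does, by first citing (i) for continuity). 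This is a small repair rather than a missing idea, but you should make it explicit rather than folding it into the equivalence chain.
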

\begin{proof}
$(i)$ If $a^w(x,D):\mathcal{S}_{\omega}(\mathbb{R}^d) \to \mathcal{S}_{\omega}(\mathbb{R}^d)$ is well defined, by Proposition~\ref{PropClosedGraph} it is continuous, and then the result follows by Propositions~\ref{LemmaKEY} and \ref{PropConverse}.

For the converse, given any $f \in \mathcal{S}_{\omega}(\mathbb{R}^d)$, let us see that
$ a^w(x,D)f \in \mathcal{S}_{\omega}(\mathbb{R}^d). $
By Proposition~\ref{PropConverse}, for every $\lambda>0$ there exists $\mu>0$ such that
$ a^w(x,D): M^1_{\mu}(\mathbb{R}^d) \to M^{\infty}_{\lambda}(\mathbb{R}^d) $
continuously. Since $f \in \mathcal{S}_{\omega}(\mathbb{R}^d) \subseteq M^1_{\mu}(\mathbb{R}^d)$, we have $a^w(x,D)f \in M^{\infty}_{\lambda}(\mathbb{R}^d)$, for every $\lambda>0$, and so
$$ a^w(x,D)f \in \bigcap_{\lambda>0} M^{\infty}_{\lambda}(\mathbb{R}^d) = \mathcal{S}_{\omega}(\mathbb{R}^d). $$
$(ii)$ If $a^w(x,D):\mathcal{S}_{\omega}(\mathbb{R}^d) \to \mathcal{S}_{\omega}(\mathbb{R}^d)$ is compact, then by Propositions~\ref{LemmaKEY} and~\ref{PropConverse}, we obtain the result.

Conversely, if for every $\lambda>0$, $\Wig^{-1}[a] \in M^{\infty}_{\lambda \otimes (-\mu)}(\mathbb{R}^{2d})$, then by $(i)$  the Weyl operator is continuous. Again an application of Propositions~\ref{LemmaKEY} and~\ref{PropConverse} gives that $a^w(x,D)$ is compact.
\end{proof}

\begin{exam}\label{ExamContCompWeylWig}
Observe first that the Wigner transform can be defined for $f,g \in \mathcal{S}'_{\omega}(\mathbb{R}^d)$ as
$$
\Wig(g,f)=\mathcal{F}_2 \mathcal{T}(g\otimes\overline{f}),
$$
where $\mathcal{F}_2$ is the second partial Fourier transform, $\mathcal{T}$ is the (unitary) change of variables \eqref{EqChangeT}, and both complex conjugate and tensor product are defined on distributions. Then, a corresponding Moyal formula holds in the frame of ultradistributions; indeed, for $h,k\in \mathcal{S}_{\omega}(\mathbb{R}^d)$ it is known that $\Wig (k,h)\in \mathcal{S}_{\omega}(\mathbb{R}^{2d})$, and we have
\begin{equation}\label{Moyal-distrib}
\langle \Wig(g,f), \Wig(k,h) \rangle = \langle\mathcal{F}_2 \mathcal{T}(g\otimes\overline{f}),\mathcal{F}_2 \mathcal{T}(k\otimes\overline{h})\rangle = \langle g\otimes\overline{f},k\otimes\overline{h}\rangle = \langle g,k \rangle \overline{\langle f,h\rangle},
\end{equation}
where the notation $\langle \cdot,\cdot\rangle$ indicates here the (conjugate-linear) duality product $\mathcal{S}'_{\omega}$-$\mathcal{S}_{\omega}$. Now let $a=\Wig(g,f)$ for some $f,g \in \mathcal{S}'_{\omega}(\mathbb{R}^d)$.  For every $h,k \in \mathcal{S}_{\omega}(\mathbb{R}^d)$, we then have by \eqref{Moyal-distrib}:
$$ \langle a^w(x,D)h, k \rangle = (2\pi)^{-d} \langle \Wig(g,f), \Wig(k,h) \rangle = (2\pi)^{-d} \langle g,k \rangle \overline{\langle f,h\rangle} = \langle (2\pi)^{-d} \overline{\langle f,h \rangle}g, k \rangle $$
that means that
$$ a^w(x,D)h = (2\pi)^{-d} \overline{\langle f,h \rangle} g. $$
Then, $a^w(x,D):\mathcal{S}_{\omega}(\mathbb{R}^d) \to \mathcal{S}_{\omega}(\mathbb{R}^d)$ if and only if $g \in \mathcal{S}_{\omega}(\mathbb{R}^d)$, for every $f\in\mathcal{S}'_{\omega}(\mathbb{R}^d)$. Since this operator has rank one, it is compact.

\end{exam}

By Example~\ref{ExamContCompWeylWig}, the function $a\in O_M^{\omega}(\mathbb{R}^{2d})$ given in Example~\ref{ExamMultCont}$(b)$ cannot be represented as $\Wig(g,f)$ for $g \in \mathcal{S}_{\omega}(\mathbb{R}^d)$ and $f \in \mathcal{S}'_{\omega}(\mathbb{R}^d)$. On the other hand, if $b(x,\xi) = e^{2ix\cdot \xi} f(\xi)$ with $f \in \mathcal{S}_{\omega}(\mathbb{R}^d)$, then $b^w(x,D):\,\mathcal{S}_{\omega}(\mathbb{R}^d)\to\mathcal{S}_{\omega}(\mathbb{R}^d)$ is well defined, continuous, and compact. In fact,
$ b = \Wig(\psi,1) $
for some suitable $\psi \in \mathcal{S}_{\omega}(\mathbb{R}^d)$, since
$$ \Wig(\psi,1)(x,\xi) = \int \psi(x+y/2) e^{-iy\cdot \xi} dy = 2^d \int \psi(s) e^{-2i(s-x)\cdot \xi} ds = 2^d e^{2ix\cdot \xi} \widehat{\psi}(2\xi). $$
Similarly, if we consider $c(x,\xi) = e^{-2ix\cdot \xi} g(x)$, for some $g \in \mathcal{S}_{\omega}(\mathbb{R}^d)$, then we have that
$ c = \Wig(\gamma, \delta) $
for some suitable $\gamma \in \mathcal{S}_{\omega}(\mathbb{R}^d)$. Indeed, from~\cite[Lemma 4.3.1]{G-Found} (proved with the duality product $\mathcal{S}'_{\omega}$-$\mathcal{S}_{\omega}$) it is easy to see that
$$ \Wig(\delta,\gamma)(x,\xi) = 2^d e^{2ix\cdot \xi} V_{\mathcal{I}\gamma}\delta(2x,2\xi) = 2^d e^{2ix\cdot \xi} \overline{\mathcal{I}\gamma(-2x)} = 2^d e^{2ix\cdot \xi} \overline{\gamma(2x)}, $$
and so
$$ \Wig(\gamma,\delta)(x,\xi) = \overline{\Wig(\delta,\gamma)(x,\xi)} = 2^d e^{-2ix\cdot \xi} \gamma(2x). $$

Now, we characterize the compactness of the Weyl operator via the STFT of the symbol. We need some preparation. Fixed $\psi \in \mathcal{S}_{\omega}(\mathbb{R}^{2d})\setminus\{0\}$, we set $\Psi := \mathcal{T}^{-1}\mathcal{F}_2^{-1}(\psi) \in \mathcal{S}_{\omega}(\mathbb{R}^{2d})$. We  check that (compare it with the formula in the proof of~\cite[Prop. 4.8]{ToftCont})
\begin{equation}\label{EqToftVicente}
V_{\Psi}K(x-y/2, x+y/2, \xi+\eta/2, -\xi+\eta/2) = (2\pi)^{-d} e^{iy \cdot \xi} V_{\psi}a(x,\xi,\eta,y).
\end{equation}
To show~\eqref{EqToftVicente}, we see what happens when commuting the phase-shift operator and $\mathcal{T}^{-1}$, where the operator $\mathcal{T}$ is given in \eqref{EqChangeT}:
\begin{lema}\label{LemmaShiftT-1}
For every $\psi \in \mathcal{S}_{\omega}(\mathbb{R}^{2d})\setminus\{0\}$, we have
$$ \Pi(x-y/2, x+y/2, \xi+\eta/2, -\xi+\eta/2) \mathcal{T}^{-1} \psi = \mathcal{T}^{-1}\Pi(x,-y,\eta,\xi)\psi, \qquad x,y,\eta,\xi \in \mathbb{R}^d. $$
\end{lema}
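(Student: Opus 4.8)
The plan is to prove the identity by direct evaluation of both sides at an arbitrary point $(t_1,t_2)\in\mathbb{R}^{2d}$, with $t_1,t_2\in\mathbb{R}^d$, and to compare the resulting phase factors and the arguments of $\psi$. The only inputs needed are the definition of the phase-shift operator $\Pi$ on $\mathbb{R}^{2d}$, where the $\mathbb{R}^{4d}$-argument splits into a translation block in $\mathbb{R}^d\times\mathbb{R}^d$ and a modulation block in $\mathbb{R}^d\times\mathbb{R}^d$, together with the explicit formula $\mathcal{T}^{-1}\psi(t_1,t_2)=\psi\big(\tfrac{t_1+t_2}{2},\,t_1-t_2\big)$ from \eqref{EqChangeT}.

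First I would compute the left-hand side. Setting $G=\mathcal{T}^{-1}\psi$ and applying $\Pi(x-y/2,\,x+y/2,\,\xi+\eta/2,\,-\xi+\eta/2)$, one picks up the factor $e^{i(t_1\cdot(\xi+\eta/2)+t_2\cdot(-\xi+\eta/2))}$ and evaluates $G$ at $(t_1-x+y/2,\,t_2-x-y/2)$. Feeding these shifted coordinates into the formula for $\mathcal{T}^{-1}$ and simplifying, the first argument of $\psi$ becomes $\tfrac{t_1+t_2}{2}-x$ and the second becomes $t_1-t_2+y$, while the exponent collapses to $(t_1-t_2)\cdot\xi+\tfrac12(t_1+t_2)\cdot\eta$. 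Hence the left-hand side at $(t_1,t_2)$ equals
$$ e^{\,i\big[(t_1-t_2)\cdot\xi+\frac12(t_1+t_2)\cdot\eta\big]}\,\psi\Big(\tfrac{t_1+t_2}{2}-x,\ t_1-t_2+y\Big). $$

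Next I would compute the right-hand side in the reverse order. Applying $\Pi(x,-y,\eta,\xi)$ to $\psi$ yields $e^{i(u_1\cdot\eta+u_2\cdot\xi)}\psi(u_1-x,\,u_2+y)$, and then applying $\mathcal{T}^{-1}$ substitutes $(u_1,u_2)=\big(\tfrac{t_1+t_2}{2},\,t_1-t_2\big)$. This produces exactly the same arguments $\tfrac{t_1+t_2}{2}-x$ and $t_1-t_2+y$ of $\psi$, and the exponent $\tfrac12(t_1+t_2)\cdot\eta+(t_1-t_2)\cdot\xi$, which coincides with the phase found above. Since the two sides agree pointwise and $\psi$ is arbitrary, the identity follows.

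The computation carries no conceptual difficulty; the only real obstacle is bookkeeping. One must keep the $\mathbb{R}^{4d}$-argument of $\Pi$ correctly partitioned into its translation block $(x-y/2,\,x+y/2)$, respectively $(x,-y)$, and its modulation block $(\xi+\eta/2,\,-\xi+\eta/2)$, respectively $(\eta,\xi)$, and track all signs through the substitution $s_1=t_1-x+y/2$, $s_2=t_2-x-y/2$. The elementary identities $\tfrac{s_1+s_2}{2}=\tfrac{t_1+t_2}{2}-x$ and $s_1-s_2=t_1-t_2+y$ are precisely what make the two orderings of the operators collapse to the same expression; checking that the two phase factors genuinely agree is the step most prone to sign errors and thus deserves the most care.
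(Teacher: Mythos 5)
Your proposal is correct and follows essentially the same route as the paper: a direct pointwise computation using the definition of $\Pi$ and the formula $\mathcal{T}^{-1}\psi(t_1,t_2)=\psi\big(\tfrac{t_1+t_2}{2},\,t_1-t_2\big)$, with the same simplifications of the arguments and the phase. The only cosmetic difference is that the paper evaluates the left-hand side and recognizes the result as the right-hand side, whereas you evaluate both sides and compare; the computations agree.
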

\begin{proof}
For $u,v \in \mathbb{R}^d$,
\begin{align*}
&\Pi(x-y/2, x+y/2, \xi+\eta/2, -\xi+\eta/2) \mathcal{T}^{-1} \psi(u,v) \\
& \qquad = e^{i((\xi+\eta/2)\cdot u + (-\xi+\eta/2)\cdot v)} \mathcal{T}^{-1}\psi(u-x+y/2, v-x-y/2) \\
& \qquad = e^{i\big(\frac{u+v}{2}\cdot \eta + (u-v)\cdot \xi\big)} \psi\Big(\frac{u+v}{2}-x, (u-v)-(-y)\Big) \\
& \qquad = \Pi(x,-y,\eta,\xi)\psi\Big(\frac{u+v}{2}, u-v\Big),
\end{align*}
which gives the result.
\end{proof}
We also use the next formula that relates the behaviour of the short-time Fourier transform of the second partial Fourier transform of $a\in\mathcal{S}'_{\omega}(\mathbb{R}^{2d})$ to that of $a$ (compare it to~\cite[(3.6)]{G-Found}).
\begin{lema}\label{LemmaSTFTPartial}
Let $a \in \mathcal{S}'_{\omega}(\mathbb{R}^{2d})$ and $\psi \in \mathcal{S}_{\omega}(\mathbb{R}^{2d}) \setminus \{0\}$. We have
$$ V_{\psi}a(x,-y,\eta,\xi) = (2\pi)^{-d} e^{iy \cdot \xi} V_{\mathcal{F}_2(\psi)}\mathcal{F}_2(a)(x,\xi,\eta,y), \qquad x,\xi,\eta,y \in \mathbb{R}^d. $$
\end{lema}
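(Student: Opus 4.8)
The plan is to reduce the identity to a direct computation with integral kernels, exploiting that $\mathcal{F}_2$ acts only on the second block of variables in $\mathbb{R}^{2d}=\mathbb{R}^d_s\times\mathbb{R}^d_t$, so that the first block stays inert and the statement is exactly the partial analogue of the covariance formula \cite[(3.6)]{G-Found}. First I would write $V_{\psi}a(x,-y,\eta,\xi)$ from the definition of the STFT on $\mathbb{R}^{2d}$, i.e.\ as the pairing of $a(s,t)$ against $\overline{\Pi(x,-y,\eta,\xi)\psi(s,t)}=\overline{\psi(s-x,t+y)}\,e^{-i(s\cdot\eta+t\cdot\xi)}$. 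On the other side, I would expand $V_{\mathcal{F}_2\psi}\mathcal{F}_2 a(x,\xi,\eta,y)$ by inserting $\mathcal{F}_2 a(s,\tau)=\int a(s,t)e^{-it\cdot\tau}\,dt$ and $\overline{\mathcal{F}_2\psi(s-x,\tau-\xi)}=\int\overline{\psi(s-x,t')}\,e^{it'\cdot(\tau-\xi)}\,dt'$.

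The key computational step is then the $\tau$-integration: collecting the $\tau$-dependent exponentials $e^{-it\cdot\tau}e^{it'\cdot\tau}e^{-i\tau\cdot y}$ gives $\int e^{i\tau\cdot(t'-t-y)}\,d\tau=(2\pi)^d\delta(t'-t-y)$, which forces $t'=t+y$ and collapses the $t'$-integral. What survives is precisely $(2\pi)^d$ times $\int\int a(s,t)\overline{\psi(s-x,t+y)}\,e^{-i(t+y)\cdot\xi}\,e^{-is\cdot\eta}\,dt\,ds$, and factoring out $e^{-iy\cdot\xi}$ identifies the remaining integral as $V_{\psi}a(x,-y,\eta,\xi)$. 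Rearranging yields the claimed formula, with the constant $(2\pi)^{-d}$ and the phase $e^{iy\cdot\xi}$ appearing exactly as in the statement.

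Since $a$ is only an ultradistribution, I expect the main obstacle to be justifying this Fubini/delta manipulation rigorously. The clean way is to first establish the identity for $a\in\mathcal{S}_{\omega}(\mathbb{R}^{2d})$, where all integrals converge absolutely and the steps above are legitimate, and then extend it to $a\in\mathcal{S}'_{\omega}(\mathbb{R}^{2d})$ by density of $\mathcal{S}_{\omega}$ in $\mathcal{S}'_{\omega}$ together with the continuity of $V_{\psi}(\cdot)$ and $\mathcal{F}_2$ on these spaces; both sides are then continuous functions of $a$ and the equality passes to the limit. Alternatively, one can run the argument entirely at the level of the conjugate-linear duality pairing, using the adjoint relation $\langle\mathcal{F}_2 a,\Phi\rangle=(2\pi)^d\langle a,\mathcal{F}_2^{-1}\Phi\rangle$ together with the operator identity obtained by commuting $\mathcal{F}_2$ past a phase-shift (in the spirit of Lemma~\ref{LemmaShiftT-1}); this avoids the Dirac delta altogether but requires the same careful bookkeeping of the permutation and sign of the arguments $(x,-y,\eta,\xi)\leftrightarrow(x,\xi,\eta,y)$.
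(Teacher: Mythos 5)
Your argument is correct, and your primary route differs from the paper's. The paper proves the lemma entirely at the level of the conjugate-linear duality pairing: it first records how $\mathcal{F}_2$ commutes with translations and modulations, arriving at the operator identity $\Pi(x_1,x_2,y_1,y_2)\mathcal{F}_2(\psi) = e^{ix_2\cdot y_2}\mathcal{F}_2\big(\Pi(x_1,-y_2,y_1,x_2)\psi\big)$, and then writes $V_{\mathcal{F}_2(\psi)}\mathcal{F}_2(a)(x,\xi,\eta,y)=\langle \mathcal{F}_2(a),\Pi(x,\xi,\eta,y)\mathcal{F}_2(\psi)\rangle = e^{-iy\cdot\xi}\langle\mathcal{F}_2(a),\mathcal{F}_2(\Pi(x,-y,\eta,\xi)\psi)\rangle=(2\pi)^d e^{-iy\cdot\xi}\langle a,\Pi(x,-y,\eta,\xi)\psi\rangle$ by Parseval for the partial Fourier transform. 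This is exactly the ``alternative'' you sketch in your last sentence; in the paper it is the whole proof, valid for arbitrary $a\in\mathcal{S}'_{\omega}(\mathbb{R}^{2d})$ with no regularization needed. Your main route --- expanding both sides as integral kernels, collapsing the $\tau$-integral via $(2\pi)^d\delta(t'-t-y)$, and then extending from $a\in\mathcal{S}_{\omega}$ to $a\in\mathcal{S}'_{\omega}$ by weak-$*$ density together with the pointwise continuity of $a\mapsto V_{\psi}a(z)=\langle a,\Pi(z)\psi\rangle$ and of $\mathcal{F}_2$ on $\mathcal{S}'_{\omega}$ --- is also sound, and the bookkeeping of arguments, the phase $e^{iy\cdot\xi}$ and the constant $(2\pi)^{-d}$ all come out correctly. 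What the paper's approach buys is the avoidance of both the Dirac delta and the density step; what yours buys is a more concrete computation that makes transparent where the phase and the sign flip $t\mapsto -y$ come from. Either write-up would be acceptable, though if you keep the kernel computation you should phrase the $\tau$-integration as Parseval in the $\tau$-variable for fixed $s$ (pairing $\mathcal{F}_2a(s,\cdot)$ against $M_{-y}T_{\xi}\mathcal{F}_2\psi(s-x,\cdot)$) rather than as a literal $\delta$-identity, since the quadruple integral is not absolutely convergent as written.
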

\begin{proof}
It is easy to see that, for $x_1,x_2,y_1,y_2 \in \mathbb{R}^d$:
\begin{align*}
T_{(x_1,x_2)}\mathcal{F}_2(\psi) &= \mathcal{F}_2(M_{(0,x_2)} T_{(x_1,0)}\psi), \\ 
M_{(y_1,y_2)}\mathcal{F}_2(\psi) &= \mathcal{F}_2(M_{(y_1,0)} T_{(0,-y_2)}\psi), \\ 
\Pi(x_1,x_2,y_1,y_2)\mathcal{F}_2(\psi) &= e^{ix_2 \cdot y_2} \mathcal{F}_2(\Pi(x_1,-y_2,y_1,x_2)\psi).
\end{align*}
Then, by the definition of the STFT,
\begin{align*}
V_{\mathcal{F}_2(\psi)}\mathcal{F}_2(a)(x,\xi,\eta,y) &= \langle \mathcal{F}_2(a), \Pi(x,\xi,\eta,y) \mathcal{F}_2(\psi) \rangle \\
&= e^{-iy\cdot \xi} \langle \mathcal{F}_2(a), \mathcal{F}_2(\Pi(x,-y,\eta,\xi)\psi) \rangle \\
&= (2\pi)^d e^{-iy\cdot \xi} \langle a, \Pi(x,-y,\eta,\xi)\psi \rangle,
\end{align*}
from which we obtain the result.
\end{proof}
For $\Psi := \mathcal{T}^{-1}\mathcal{F}_2^{-1}(\psi)$, by \eqref{EqShubin} and
Lemma~\ref{LemmaShiftT-1} we have that
\begin{align*}
& V_{\Psi}K(x-y/2,x+y/2,\xi+\eta/2,-\xi+\eta/2) \\
& \qquad = \langle K, \Pi(x-y/2,x+y/2,\xi+\eta/2,-\xi+\eta/2)\mathcal{T}^{-1}\mathcal{F}_2^{-1}(\psi) \rangle \\
& \qquad = \langle \mathcal{T}^{-1}\mathcal{F}_2^{-1}(a), \mathcal{T}^{-1}\Pi(x,-y,\eta,\xi)\mathcal{F}^{-1}_2(\psi) \rangle \\
& \qquad = \langle \mathcal{F}_2^{-1}(a), \Pi(x,-y,\eta,\xi)\mathcal{F}^{-1}_2(\psi) \rangle \\
& \qquad = V_{\mathcal{F}^{-1}_2(\psi)}\mathcal{F}_2^{-1}(a)(x,-y,\eta,\xi).
\end{align*}
Hence, by Lemma~\ref{LemmaSTFTPartial} (replacing $\psi$ and $a$ by $\mathcal{F}_2^{-1}(\psi)$ and $\mathcal{F}_2^{-1}(a)$), we see that the equality in~\eqref{EqToftVicente} holds and therefore by Propositions~\ref{PropConverse} and~\ref{LemmaKEY}  (since $V_{\psi}a$ of any $a\in\mathcal{S}'_\omega(\R^d)$ is continuous~\cite[Theorem 2.4]{GZ}, we can replace the essential supremum by the supremum in the result below):
\begin{theo}\label{TheoContCompWeyl}
Let $\omega$ be a subadditive weight function, $0 \neq \psi \in \mathcal{S}_{\omega}(\mathbb{R}^{2d})$, and $a \in \mathcal{S}'_{\omega}(\mathbb{R}^{2d})$.
\begin{itemize}
\item[(i)] The Weyl operator acts $a^w(x,D):\mathcal{S}_{\omega}(\mathbb{R}^d)\to \mathcal{S}_{\omega}(\mathbb{R}^d)$ if and only if for every $\lambda>0$, there exists $\mu>0$ such that
\begin{equation}\label{EqCharWeyl}
\sup_{x,\xi,\eta,y \in \mathbb{R}^d} |V_{\psi}a(x,\xi,\eta,y)| e^{\lambda \omega(x-y/2, \xi+\eta/2)} e^{-\mu\omega(x+y/2, -\xi+\eta/2)} < +\infty.
\end{equation}
\item[(ii)] The Weyl operator $a^w(x,D):\mathcal{S}_{\omega}(\mathbb{R}^d) \to \mathcal{S}_{\omega}(\mathbb{R}^d)$ is compact if and only if there exists $\mu>0$ such that for every $\lambda>0$,~\eqref{EqCharWeyl} is satisfied.
\end{itemize}
\end{theo}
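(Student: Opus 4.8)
The plan is to deduce both assertions from Theorem~\ref{TheoCharWeyl} by translating the membership conditions on the Weyl kernel $K=\Wig^{-1}[a]$ into conditions on $V_{\psi}a$ via the identity~\eqref{EqToftVicente} established above. Recall that Theorem~\ref{TheoCharWeyl}(i) characterizes the action $a^w(x,D):\mathcal{S}_{\omega}(\mathbb{R}^d)\to\mathcal{S}_{\omega}(\mathbb{R}^d)$ by the requirement that for every $\lambda>0$ there exist $\mu>0$ with $K\in M^{\infty}_{\lambda\otimes(-\mu)}(\mathbb{R}^{2d})$, whereas Theorem~\ref{TheoCharWeyl}(ii) characterizes compactness by the stronger requirement that a single $\mu>0$ serves for all $\lambda>0$. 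Hence it suffices to prove, for fixed $\lambda,\mu>0$, that $K\in M^{\infty}_{\lambda\otimes(-\mu)}(\mathbb{R}^{2d})$ if and only if~\eqref{EqCharWeyl} holds; statements (i) and (ii) then follow by reading off the two quantifier patterns.

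To compute the modulation-space norm of $K$ I would use the specific window $\Psi:=\mathcal{T}^{-1}\mathcal{F}_2^{-1}(\psi)\in\mathcal{S}_{\omega}(\mathbb{R}^{2d})\setminus\{0\}$; this is harmless since $M^{\infty}_{\lambda\otimes(-\mu)}(\mathbb{R}^{2d})$ does not depend on the choice of window up to equivalent norms, and $\Psi\neq 0$ because $\mathcal{T}^{-1}$ and $\mathcal{F}_2^{-1}$ are isomorphisms of $\mathcal{S}_{\omega}(\mathbb{R}^{2d})$. With the convention of Proposition~\ref{PropConverse}, in which the weight $\lambda$ acts on the phase-space block $(X,\Xi)$ of the output copy and $-\mu$ on the block $(Y,H)$ of the input copy, membership $K\in M^{\infty}_{\lambda\otimes(-\mu)}(\mathbb{R}^{2d})$ means
\begin{equation*}
\esup_{(X,Y,\Xi,H)\in\mathbb{R}^{4d}}|V_{\Psi}K(X,Y,\Xi,H)|\,e^{\lambda\omega(X,\Xi)}\,e^{-\mu\omega(Y,H)}<+\infty.
\end{equation*}

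The decisive step is to insert~\eqref{EqToftVicente}. Setting $X=x-y/2$, $Y=x+y/2$, $\Xi=\xi+\eta/2$ and $H=-\xi+\eta/2$ defines an invertible linear change of variables on $\mathbb{R}^{4d}$ (each $d$-dimensional block has determinant $1$), so the supremum over $(X,Y,\Xi,H)$ is the same as the supremum over $(x,\xi,\eta,y)$. Taking moduli in~\eqref{EqToftVicente} removes the unimodular factor $e^{iy\cdot\xi}$ and leaves only the harmless constant $(2\pi)^{-d}$, while the weights transform exactly into $\omega(X,\Xi)=\omega(x-y/2,\xi+\eta/2)$ and $\omega(Y,H)=\omega(x+y/2,-\xi+\eta/2)$. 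Thus the displayed essential supremum is finite if and only if~\eqref{EqCharWeyl} holds, once we note that $V_{\psi}a$ is continuous for every $a\in\mathcal{S}'_{\omega}(\mathbb{R}^{2d})$ by~\cite[Theorem 2.4]{GZ}, so that the essential supremum may be replaced by the ordinary supremum.

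I do not anticipate a serious obstacle in this argument, since the analytic core, the identity~\eqref{EqToftVicente}, has already been proved through Lemmas~\ref{LemmaShiftT-1} and~\ref{LemmaSTFTPartial}. The remaining work is essentially bookkeeping: verifying that the linear substitution is a bijection of $\mathbb{R}^{4d}$ and, above all, tracking carefully which phase-space block carries the weight $\lambda$ and which carries $-\mu$, so that the arguments $\omega(X,\Xi)$ and $\omega(Y,H)$ land on the correct combinations $\omega(x-y/2,\xi+\eta/2)$ and $\omega(x+y/2,-\xi+\eta/2)$ appearing in~\eqref{EqCharWeyl}. This matching is the one place where a sign or an index slip would be easy to make, and is where I would concentrate the verification.
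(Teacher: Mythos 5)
Your argument is correct and is essentially the paper's own proof: the paper likewise obtains the theorem by combining the kernel characterization (Propositions~\ref{LemmaKEY} and~\ref{PropConverse}, packaged as Theorem~\ref{TheoCharWeyl}) with the identity~\eqref{EqToftVicente}, using the window $\Psi=\mathcal{T}^{-1}\mathcal{F}_2^{-1}(\psi)$, the invertibility of the linear substitution $(x,\xi,\eta,y)\mapsto(x-y/2,\,x+y/2,\,\xi+\eta/2,\,-\xi+\eta/2)$, and the continuity of $V_{\psi}a$ to replace the essential supremum by the supremum. The weight bookkeeping you flag as the delicate point does land exactly on $\omega(x-y/2,\xi+\eta/2)$ and $\omega(x+y/2,-\xi+\eta/2)$ as in~\eqref{EqCharWeyl}, so no gap remains.
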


\begin{cor}\label{CorCompactConvol}
Let $\omega$ be a subadditive weight function and let $a \in \mathcal{S}'_{\omega}(\mathbb{R}^{2d})$. If the Weyl operator $a^w(x,D):\mathcal{S}_{\omega}(\mathbb{R}^d) \to \mathcal{S}_{\omega}(\mathbb{R}^d)$ is compact, then $a \in (O_C^{\omega})'(\mathbb{R}^{2d})$.
\end{cor}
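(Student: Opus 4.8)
The plan is to read off the conclusion by matching two STFT characterizations already established: the description of compactness in Theorem~\ref{TheoContCompWeyl}(ii) and the description of $\omega$-convolutors in Theorem~\ref{TheoCharConvSTFT}, the latter applied with $\mathbb{R}^{2d}$ in place of $\mathbb{R}^d$ since here $a\in\mathcal{S}'_\omega(\mathbb{R}^{2d})$. Writing $(x,\xi)\in\mathbb{R}^{2d}$ for the position variable of $V_\psi a$ and $(\eta,y)\in\mathbb{R}^{2d}$ for its frequency variable, Theorem~\ref{TheoCharConvSTFT} says that $a\in(O_C^\omega)'(\mathbb{R}^{2d})$ precisely when, for every $\mu'>0$, there exist $C,\lambda'>0$ with
\begin{equation}\label{EqGoalConv}
|V_\psi a(x,\xi,\eta,y)|\leq C\, e^{-\mu'\omega(x,\xi)}\,e^{\lambda'\omega(\eta,y)},\qquad x,\xi,\eta,y\in\mathbb{R}^d.
\end{equation}
So the task reduces to deducing \eqref{EqGoalConv} from compactness. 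By Theorem~\ref{TheoContCompWeyl}(ii), compactness of $a^w(x,D)$ provides a \emph{fixed} $\mu>0$ such that for \emph{every} $\lambda>0$ there is $C_\lambda>0$ with
$$ |V_\psi a(x,\xi,\eta,y)|\leq C_\lambda\, e^{-\lambda\omega(u)}\,e^{\mu\omega(v)},\qquad u:=(x-y/2,\xi+\eta/2),\ \ v:=(x+y/2,-\xi+\eta/2). $$
The decisive feature is that $\lambda$ may be taken arbitrarily large while $\mu$ remains fixed; this is the freedom that lets the decay dominate.

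Next I would relate the sheared arguments $u,v$ to $(x,\xi)$ and $(\eta,y)$ using only that $\omega$ is radial, increasing and subadditive (property $(\alpha')$). Since $(x,\xi)=u+(y/2,-\eta/2)$ and $v=(x,-\xi)+(y/2,\eta/2)$, the triangle inequality for the Euclidean norm combined with subadditivity (and $\omega(\tfrac12 t)\leq\omega(t)$) give
$$ \omega(x,\xi)\leq\omega(u)+\omega(\eta,y)\qquad\text{and}\qquad\omega(v)\leq\omega(x,\xi)+\omega(\eta,y), $$
the first of which rearranges to $\omega(u)\geq\omega(x,\xi)-\omega(\eta,y)$. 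Substituting both bounds into the compactness estimate yields
$$ -\lambda\omega(u)+\mu\omega(v)\leq(\mu-\lambda)\,\omega(x,\xi)+(\lambda+\mu)\,\omega(\eta,y). $$
Given $\mu'>0$, I then choose $\lambda:=\mu+\mu'$, so that $\mu-\lambda=-\mu'$ and $\lambda+\mu=2\mu+\mu'=:\lambda'$, which is exactly \eqref{EqGoalConv}. An appeal to Theorem~\ref{TheoCharConvSTFT} then gives $a\in(O_C^\omega)'(\mathbb{R}^{2d})$ and completes the proof.

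The analytic content is light; the one point demanding care is the bookkeeping of quantifiers. The clause ``$\exists\,\mu\ \forall\,\lambda$'' furnished by compactness is precisely what makes the argument go through: fixing $\mu'$ and setting $\lambda=\mu+\mu'$ manufactures an arbitrarily strong decay $e^{-\mu'\omega(x,\xi)}$ in the position variable while confining all growth to $e^{\lambda'\omega(\eta,y)}$ in the frequency variable, which is the asymmetry required by the convolutor condition. I expect this tracking of the order of quantifiers, rather than any single inequality, to be the main thing to get right; everything else is a routine consequence of subadditivity of $\omega$.
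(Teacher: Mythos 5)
Your argument is correct and is essentially the paper's own proof: both invoke Theorem~\ref{TheoContCompWeyl}(ii) to get the ``$\exists\,\mu\ \forall\,\lambda$'' estimate, use subadditivity and radiality of $\omega$ to transfer the decay from the sheared variables to $(x,\xi)$ and the growth to $(\eta,y)$, and conclude via Theorem~\ref{TheoCharConvSTFT}. The only cosmetic difference is that the paper re-indexes $\lambda\mapsto\lambda+\mu$ at the outset whereas you choose $\lambda=\mu+\mu'$ at the end; the quantifier bookkeeping you emphasize is exactly the point the paper relies on as well.
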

\begin{proof}
Fix $0 \neq \psi \in \mathcal{S}_{\omega}(\mathbb{R}^{2d})$. By Theorem~\ref{TheoContCompWeyl}$(ii)$, there exists $\mu>0$ such that for every $\lambda>0$ there exists $C_\lambda>0$ such that 
$$
|V_{\psi}a(x,\xi,\eta,y)|\le C_\lambda e^{-(\lambda+\mu) \omega(x-y/2, \xi+\eta/2)} e^{\mu\omega(x+y/2, -\xi+\eta/2)}, \ \ x,\xi,\eta,y \in \mathbb{R}^d.$$ 
Now, since
$$ -(\lambda+\mu) \omega(x-y/2,\xi+\eta/2)) \leq -(\lambda+\mu)\omega(x,\xi) + (\lambda+\mu)\omega(-y/2,\eta/2) $$
and $\mu\omega(x+y/2, -\xi+\eta/2) \leq \mu\omega(x,\xi) + \mu\omega(y/2,\eta/2)$, we obtain
$$  |V_{\psi}a(x,\xi,\eta,y)|\le C_\lambda e^{-\lambda\omega(x,\xi)} e^{(\lambda+2\mu)\omega(y,\eta)},\ \  x,\xi,\eta,y \in \mathbb{R}^d. $$
The result then follows by Theorem~\ref{TheoCharConvSTFT}.
\end{proof}
We note that the converse of Corollary~\ref{CorCompactConvol} does not hold. In fact, the function in Example~\ref{ExamMultCont}$(b)$ is an $\omega$-convolutor. For $\psi \in \mathcal{S}_{\omega}(\mathbb{R}^{2d})$,
\begin{align*}
V_{\psi}a(x,\xi,\eta,y) &= \int e^{-it\eta} e^{-isy} e^{-2its} f(s) \overline{\psi(t-x,s-\xi)} dt ds \\
&= \int e^{-i(t+x)\eta} e^{-isy} e^{-2i(t+x)s} f(s) \overline{\psi(t,s-\xi)} dt ds \\
&= e^{-ix\eta} \int e^{-is(y+2x)} e^{-it(\eta+2s)} f(s) \overline{\psi(t,s-\xi)} dt ds \\
&= e^{-ix\eta} \int e^{-is(y+2x)} f(s) \mathcal{F}_1\big(\overline{\psi}\big)(\eta+2s, s-\xi) ds.
\end{align*}
Since $f \in \mathcal{S}_{\omega}(\mathbb{R}^d)$ and $\psi \in \mathcal{S}_{\omega}(\mathbb{R}^{2d})$, we have that
$$ f(X) \mathcal{F}_1\big(\overline{\psi}\big)(Y,Z) \in \mathcal{S}_{\omega}(\mathbb{R}^{3d}_{(X,Y,Z)}). $$
Since the change of variables
$ X=s, Y=\eta+2s, Z=s-\xi $
is invertible,  we have
$$ f(s) \mathcal{F}_1\big(\overline{\psi}\big)(\eta+2s, s-\xi) \in \mathcal{S}_{\omega}(\mathbb{R}^{3d}_{(s,\eta,\xi)}). $$
Therefore,
$$ \mathcal{F}_{s\mapsto w} \Big( f(s) \mathcal{F}_1 \big(\overline{\psi}\big)(\eta+2s, s-\xi) \Big) \in \mathcal{S}_{\omega}(\mathbb{R}^{3d}_{(w,\eta,\xi)}),  $$
and (with $w=y+2x$) for every $\lambda>0$ there exists $C_{\lambda}>0$ such that
$$ |V_{\psi}a(x,\xi,\eta,y)| \leq C_{\lambda} e^{-\lambda \omega(y+2x)} e^{-\lambda \omega(\xi)} e^{-\lambda \omega(\eta)} \leq C_{\lambda} e^{-\lambda\omega(x,\xi)} e^{\lambda\omega(\eta,y)}. $$

\section{Applications and further results}\label{SectLoc}
In this section we apply our previous study to the compactness of the localization operator when acting from  $\mathcal{S}_{\omega}(\mathbb{R}^d)$ into itself, and we also provide some examples and remarks. First, we observe that if $a \in \mathcal{S}'_{\omega}(\mathbb{R}^{2d})$, then $L^a_{\psi,\gamma}:\mathcal{S}_{\omega}(\mathbb{R}^d) \to \mathcal{S}'_{\omega}(\mathbb{R}^d)$ is linear and continuous, so the localization operator $L^a_{\psi,\gamma}:\mathcal{S}_{\omega}(\mathbb{R}^d)\to\mathcal{S}_{\omega}(\mathbb{R}^d)$ is well defined if and only if it is continuous (Lemma~\ref{well-def}). By~\eqref{EqRelLocWeyl} and Theorems~\ref{TheoCharWeyl} and~\ref{TheoContCompWeyl}, we obtain immediately the following characterization:
\begin{theo}\label{TheoContCompLocTrivial}
Let $\omega$ be a subadditive weight function. Let $\psi,\gamma,\varphi \in \mathcal{S}_{\omega}(\mathbb{R}^d)$ be non-zero windows and $a \in \mathcal{S}'_{\omega}(\mathbb{R}^{2d})$.
\begin{itemize}
\item[(i)] The localization operator $L^a_{\psi,\gamma}:\mathcal{S}_{\omega}(\mathbb{R}^d) \to \mathcal{S}_{\omega}(\mathbb{R}^d)$ is well defined (and continuous) if and only if for every $\lambda>0$ there exists $\mu>0$ such that $\Wig^{-1}[a \ast \Wig(\gamma, \psi)] \in M^{\infty}_{\lambda \otimes (-\mu)}(\mathbb{R}^{2d})$ if and only if for every $\lambda>0$, there exists $\mu>0$ such that
\begin{equation}\label{EqCharLoc}
\esup_{x,\xi,\eta,y \in \mathbb{R}^d} |V_{\varphi}(a \ast \Wig(\gamma, \psi))(x,\xi,\eta,y)| e^{\lambda \omega(x-y/2, \xi+\eta/2)} e^{-\mu\omega(x+y/2, -\xi+\eta/2)} < +\infty.
\end{equation}
\item[(ii)] The localization operator $L^a_{\psi,\gamma}:\mathcal{S}_{\omega}(\mathbb{R}^d) \to \mathcal{S}_{\omega}(\mathbb{R}^d)$  is compact if and only if there exists $\mu>0$ such that $\Wig^{-1}[a \ast \Wig(\gamma, \psi)]\in\bigcap_{\lambda>0} M^{\infty}_{\lambda \otimes (-\mu)}(\mathbb{R}^{2d})$ if and only if there exists $\mu>0$ such that for every $\lambda>0$,~\eqref{EqCharLoc} is satisfied.
\end{itemize}
\end{theo}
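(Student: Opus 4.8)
The plan is to recognize the localization operator as a Weyl operator and then transport the characterizations already established for Weyl operators. By the composition formula~\eqref{EqRelLocWeyl}, we have $L^a_{\psi,\gamma} = b^w(x,D)$ with Weyl symbol $b := a \ast \Wig(\gamma,\psi)$. The only preliminary point that needs justification is that $b$ is an admissible symbol, i.e.\ $b \in \mathcal{S}'_{\omega}(\mathbb{R}^{2d})$: since $\gamma,\psi \in \mathcal{S}_{\omega}(\mathbb{R}^d)$, the cross-Wigner transform satisfies $\Wig(\gamma,\psi) \in \mathcal{S}_{\omega}(\mathbb{R}^{2d})$ (as recalled after~\eqref{EqWig}), and the convolution of an element of $\mathcal{S}'_{\omega}(\mathbb{R}^{2d})$ with a test function in $\mathcal{S}_{\omega}(\mathbb{R}^{2d})$ again belongs to $\mathcal{S}'_{\omega}(\mathbb{R}^{2d})$. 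Thus Theorems~\ref{TheoCharWeyl} and~\ref{TheoContCompWeyl} apply verbatim to the symbol $b$.

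For part~$(i)$, I would first invoke Lemma~\ref{well-def} with $X = \mathcal{S}_{\omega}(\mathbb{R}^d)$: since $L^a_{\psi,\gamma}:\mathcal{S}_{\omega}(\mathbb{R}^d)\to\mathcal{S}'_{\omega}(\mathbb{R}^d)$ is linear and continuous, well-definedness as a self-map of $\mathcal{S}_{\omega}(\mathbb{R}^d)$ is equivalent to continuity. The first stated equivalence is then precisely Theorem~\ref{TheoCharWeyl}$(i)$ applied to $b$, yielding $\Wig^{-1}[a \ast \Wig(\gamma,\psi)] \in M^{\infty}_{\lambda\otimes(-\mu)}(\mathbb{R}^{2d})$ for a $\mu$ depending on $\lambda$. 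The second equivalence, namely the STFT condition~\eqref{EqCharLoc}, is Theorem~\ref{TheoContCompWeyl}$(i)$ applied to $b$, with $\varphi$ serving as the window (denoted $\psi$ in that theorem).

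For part~$(ii)$, the argument is identical, but with the quantifier on $\mu$ made uniform in $\lambda$: Theorem~\ref{TheoCharWeyl}$(ii)$ and Theorem~\ref{TheoContCompWeyl}$(ii)$, again applied to the symbol $b$, give respectively the condition $\Wig^{-1}[a \ast \Wig(\gamma,\psi)] \in \bigcap_{\lambda>0} M^{\infty}_{\lambda\otimes(-\mu)}(\mathbb{R}^{2d})$ for a single $\mu>0$, and its reformulation in terms of~\eqref{EqCharLoc} with $\mu$ chosen before $\lambda$.

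I do not expect any genuine obstacle: the statement is a direct corollary, and all the substance is contained in the previously established Weyl-operator theorems. The one step that is not pure bookkeeping is confirming $a \ast \Wig(\gamma,\psi) \in \mathcal{S}'_{\omega}(\mathbb{R}^{2d})$ so that the hypotheses of those theorems are met; this is a standard mapping property of the convolution between $\mathcal{S}'_{\omega}$ and $\mathcal{S}_{\omega}$, which can be checked directly from the seminorm estimates or quoted from the references on $\omega$-convolutors.
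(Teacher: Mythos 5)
Your proposal is correct and follows exactly the paper's route: the paper likewise derives this theorem immediately from the identity $L^a_{\psi,\gamma}=(a\ast\Wig(\gamma,\psi))^w(x,D)$ in~\eqref{EqRelLocWeyl} together with Theorems~\ref{TheoCharWeyl} and~\ref{TheoContCompWeyl}, after noting via Lemma~\ref{well-def} that well-definedness is equivalent to continuity. Your extra remark that $a\ast\Wig(\gamma,\psi)\in\mathcal{S}'_{\omega}(\mathbb{R}^{2d})$ is a reasonable (and correct) piece of due diligence that the paper leaves implicit.
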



Then we have:
\begin{theo}\label{compactness-implications}
Let $\omega$ be a subadditive weight function and $a \in \mathcal{S}'_{\omega}(\mathbb{R}^{2d})$. Consider the following conditions:
\begin{itemize}
\item[(i)] The Weyl operator $a^w(x,D):\mathcal{S}_{\omega}(\mathbb{R}^d)\to\mathcal{S}_{\omega}(\mathbb{R}^d)$ is compact.
\item[(ii)] $a \in (O_C^{\omega})'(\mathbb{R}^{2d})$.
\item[(iii)] The localization operator $L^a_{\psi,\gamma}:\mathcal{S}_{\omega}(\mathbb{R}^d) \to \mathcal{S}_{\omega}(\mathbb{R}^d)$ is compact for every  $\psi,\gamma\in \mathcal{S}_{\omega}(\mathbb{R}^d)$.
\end{itemize}
We have $(i)\Rightarrow (ii) \Rightarrow (iii)$. 
\end{theo}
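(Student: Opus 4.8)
The plan is to establish the two implications separately, each reducing to a result already proved in the excerpt.

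For $(i)\Rightarrow(ii)$ there is nothing new to do: this is exactly the content of Corollary~\ref{CorCompactConvol}, which asserts that compactness of $a^w(x,D)$ forces $a\in(O_C^{\omega})'(\mathbb{R}^{2d})$. So I would simply invoke it.

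For $(ii)\Rightarrow(iii)$ I would start from the identity~\eqref{EqRelLocWeyl}, which presents the localization operator as the Weyl operator with symbol $b:=a\ast\Wig(\gamma,\psi)$. Since $\psi,\gamma\in\mathcal{S}_{\omega}(\mathbb{R}^d)$ we have $\Wig(\gamma,\psi)\in\mathcal{S}_{\omega}(\mathbb{R}^{2d})$, and the hypothesis $a\in(O_C^{\omega})'(\mathbb{R}^{2d})$ says precisely that convolving such an $a$ with any element of $\mathcal{S}_{\omega}(\mathbb{R}^{2d})$ lands back in $\mathcal{S}_{\omega}(\mathbb{R}^{2d})$; hence $b=a\ast\Wig(\gamma,\psi)\in\mathcal{S}_{\omega}(\mathbb{R}^{2d})$. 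Next I would apply Theorem~\ref{TheoCharWeyl}$(ii)$ to the symbol $b$: compactness of $b^w(x,D)$ is equivalent to the existence of $\mu>0$ with $\Wig^{-1}[b]\in\bigcap_{\lambda>0}M^{\infty}_{\lambda\otimes(-\mu)}(\mathbb{R}^{2d})$. Because $\Wig^{-1}[b]=\mathcal{T}^{-1}\mathcal{F}_2^{-1}(b)$ and $\mathcal{S}_{\omega}(\mathbb{R}^{2d})$ is invariant under the partial Fourier transform and under the linear change of variables $\mathcal{T}$, we get $\Wig^{-1}[b]\in\mathcal{S}_{\omega}(\mathbb{R}^{2d})$.

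It then remains to verify the inclusion $\mathcal{S}_{\omega}(\mathbb{R}^{2d})\subseteq M^{\infty}_{\lambda\otimes(-\mu)}(\mathbb{R}^{2d})$ for all $\lambda,\mu>0$. Given $K\in\mathcal{S}_{\omega}(\mathbb{R}^{2d})$, its STFT decays faster than $e^{-\nu\omega(x,y,\xi,\eta)}$ for every $\nu$; since $\omega$ is radial one has $\omega(x,\xi)\le\omega(x,y,\xi,\eta)$, so the growth factor $e^{\lambda\omega(x,\xi)}$ is absorbed by taking $\nu\ge\lambda$, while $e^{-\mu\omega(y,\eta)}\le1$ is harmless. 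Fixing for instance $\mu=1$, the criterion of Theorem~\ref{TheoCharWeyl}$(ii)$ holds for every $\lambda>0$, and therefore $L^a_{\psi,\gamma}=b^w(x,D)$ is compact for every pair of windows $\psi,\gamma$.

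I expect the only real obstacle to be conceptual rather than computational: the key insight is that the convolutor hypothesis $(ii)$ pushes the \emph{Weyl symbol} of the localization operator into the regularizing class $\mathcal{S}_{\omega}(\mathbb{R}^{2d})$, after which compactness is essentially automatic. The remaining estimate $\mathcal{S}_{\omega}\subseteq M^{\infty}_{\lambda\otimes(-\mu)}$, uniform in $\lambda$, is routine once the radiality of $\omega$ is used.
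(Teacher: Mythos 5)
Your proposal is correct and follows essentially the same route as the paper: $(i)\Rightarrow(ii)$ is exactly Corollary~\ref{CorCompactConvol}, and for $(ii)\Rightarrow(iii)$ the paper likewise observes that $a\ast\Wig(\gamma,\psi)\in\mathcal{S}_{\omega}(\mathbb{R}^{2d})$ and concludes compactness via Theorem~\ref{TheoContCompLocTrivial} (which is just \eqref{EqRelLocWeyl} combined with Theorem~\ref{TheoCharWeyl}). The only difference is that you explicitly verify the inclusion $\mathcal{S}_{\omega}(\mathbb{R}^{2d})\subseteq\bigcap_{\lambda>0}M^{\infty}_{\lambda\otimes(-\mu)}(\mathbb{R}^{2d})$, a routine step the paper leaves implicit.
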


\begin{proof}
$(i)\Rightarrow(ii)$ follows from Corollary~\ref{CorCompactConvol}.

$(ii)\Rightarrow(i)$: If $a \in (O_C^{\omega})'(\mathbb{R}^{2d})$, then $a\ast \Wig(\gamma,\psi) \in \mathcal{S}_{\omega}(\mathbb{R}^{2d})$ for any $\psi,\gamma \in \mathcal{S}_{\omega}(\mathbb{R}^d)$ and hence $L^a_{\psi,\gamma} = (a\ast \Wig(\gamma,\psi))^w(x,D):\mathcal{S}_{\omega}(\mathbb{R}^d)\to \mathcal{S}_{\omega}(\mathbb{R}^d)$ is compact for every  $\psi,\gamma \in \mathcal{S}_{\omega}(\mathbb{R}^d)$ by Theorem~\ref{TheoContCompLocTrivial}.
\end{proof}

\begin{exam}

We have seen in Section~\ref{Weyl oper} that $b(x,\xi) = e^{2ix\cdot \xi} f(\xi)$, being $f\in\mathcal{S}_{\omega}(\mathbb{R}^d)$, coincides with $\Wig(\psi,1)$ for some $\psi \in \mathcal{S}_{\omega}(\mathbb{R}^{2d})$. Therefore, using~\cite[Lemma 14.5.1(b)]{G-Found}
(proved with the duality product $ \mathcal{S}'_{\omega}$-$ \mathcal{S}_{\omega}$), we have, for any $\gamma \in \mathcal{S}_{\omega}(\mathbb{R}^{2d})$,
\begin{align*}
V_{\Wig \gamma}(\Wig (\psi,1))(x,y,\xi,\eta) &= (2\pi)^{d/2} e^{-iy\cdot\eta} \overline{V_{\gamma}\psi(x+\eta/2, y-\xi/2)} V_{\gamma}1(x-\eta/2, y+\xi/2) \\
&= (2\pi)^{d/2} e^{-iy \cdot \eta} \overline{V_{\gamma}\psi(x+\eta/2, y-\xi/2)} e^{-i(x-\eta/2)\cdot (y+\xi/2)} \overline{\widehat{\gamma}(-y-\xi/2)}.
\end{align*}
Since $\psi \in \mathcal{S}_{\omega}(\mathbb{R}^{2d})$, we have that for all $\lambda>0$ there exists $C>0$ such that
$$ |V_{\Wig \gamma}(\Wig (\psi,1))(x,y,\xi,\eta)| \leq C e^{-\lambda\omega(x+\eta/2, y-\xi/2)} \leq C e^{-\lambda\omega(x,y)} e^{\lambda\omega(\xi,\eta)}. $$
Hence, by Theorem~\ref{TheoCharConvSTFT}, we have  $b \in (O_C^{\omega})'(\mathbb{R}^{2d})$. We can proceed similarly for $c(x,\xi) = e^{-2ix\cdot \xi}g(x)$, for some $g\in\mathcal{S}_{\omega}(\mathbb{R}^d)$, and show that $c \in (O_C^{\omega})'(\mathbb{R}^{2d})$. Hence, $L^b_{\psi,\gamma}$ and $L^c_{\psi,\gamma}$ are compact when acting from $\mathcal{S}_{\omega}(\mathbb{R}^d)$ into itself for every  $\psi,\gamma \in \mathcal{S}_{\omega}(\mathbb{R}^d)$, by Theorem~\ref{compactness-implications}.
\end{exam}
\begin{nota}
\label{nota64}
If $a(x,\xi)=e^{-2ix\cdot\xi}f(\xi)$ for some $f\in
\mathcal S_\omega(\R^d)$ is the symbol considered in Example~\ref{ExamMultCont}$(b)$, we see that 
$a$ is an $\omega$-convolutor. Indeed, for $\psi_1, \psi_2 \in \mathcal{S}_{\omega}(\mathbb{R}^d)$, we consider $\psi:= \psi_1 \otimes \psi_2 \in \mathcal{S}_{\omega}(\mathbb{R}^{2d})$. We have for all $z=(z_1,z_2), \eta=(\eta_1,\eta_2)\in\mathbb{R}^{2d}$ that
\begin{align*}
V_{\psi}a(z,\eta) &= \int_{\mathbb{R}^{2d}} a(x,\xi) \overline{\psi(x-z_1,\xi-z_2)} e^{-i(x,\xi)\cdot(\eta_1,\eta_2)} dx d\xi \\
&= \int_{\mathbb{R}^{2d}} e^{-2ix\cdot \xi} f(\xi) \overline{\psi_1(x-z_1)} \overline{\psi_2(\xi-z_2)} e^{-i(x\cdot \eta_1 + \xi\cdot \eta_2)} dx d\xi \\
&= \int_{\mathbb{R}^{d}} e^{-ix\cdot \eta_1} \overline{\psi_1(x-z_1)} \Big( \int_{\mathbb{R}^{d}} f(\xi) \overline{\psi_2(\xi-z_2)} e^{-i\xi\cdot (2x+\eta_2)} d\xi \Big) dx \\
&= \int_{\mathbb{R}^{d}} e^{-ix\cdot \eta_1} \overline{\psi_1(x-z_1)} V_{\psi_2}f(z_2, 2x+\eta_2) dx.
\end{align*}
Now assume that there exist $c \in \mathbb{R}$ and $b>0$ such that $\omega(t) \geq c+b\log(1+t)$. Since $\psi_1 \in \mathcal{S}_{\omega}(\mathbb{R}^d)$, for every $\mu>0$ there exists $C_{\mu}>0$ such that
$$ |\overline{\psi_1(x-z_1)}| \leq C_{\mu} e^{-\mu\omega(x-z_1)} \leq C_{\mu} e^{-\mu\omega(z_1)} e^{\mu\omega(x)} $$
and as $f \in \mathcal{S}_{\omega}(\mathbb{R}^d)$ there exists $C'_{\mu}>0$ such that
\begin{align*}
|V_{\psi_2}f(z_2,2x+\eta_2)| \leq C'_{\mu} e^{-2(\mu+(d+1)/b)\omega(z_2,2x+\eta_2)} &\leq C'_{\mu} e^{-\mu\omega(z_2)} e^{-(\mu+(d+1)/b)\omega(2x+\eta_2)} \\
&\leq C'_{\mu} e^{-\mu\omega(z_2)} e^{-(\mu+(d+1)/b)\omega(x)} e^{(\mu+(d+1)/b)\omega(\eta_2)}.
\end{align*}
Thus, we obtain that for all $\mu>0$ there exists $\lambda_{\mu}=\mu+(d+1)/b>0$ such that
$$ |V_{\psi}a(z,\eta)| \leq C_{\mu} C'_{\mu} e^{-\mu(\omega(z_1)+\omega(z_2))} e^{\lambda_{\mu}\omega(\eta_2)} \int_{\mathbb{R}^d} e^{-\frac{d+1}{b}\omega(x)} dx. $$
The integral converges and hence, by Theorem~\ref{TheoCharConvSTFT}, we obtain that $a \in (O_C^{\omega})'(\mathbb{R}^{2d})$.

It follows, by Theorem~\ref{compactness-implications}, that
the localization operator $L^a_{\psi,\gamma}:\mathcal{S}_{\omega}(\mathbb{R}^d)\to \mathcal{S}_{\omega}(\mathbb{R}^d)$ is compact for every  $\psi,\gamma \in \mathcal{S}_{\omega}(\mathbb{R}^d)$.
 However, we have also seen in Example~\ref{ExamMultCont}$(b)$ that the corresponding Weyl operator $a^w(x,D)$, defined on $\mathcal{S}_{\omega}(\mathbb{R}^d)$, does not take values in 
 $\mathcal{S}_{\omega}(\mathbb{R}^d)$. So (ii) cannot imply (i) in Theorem~\ref{compactness-implications}. 
	
	On the other hand, that (iii) implies (ii) is also far to be true. In fact, we first observe that there is the following relation between the Wigner-like transform and the cross-Wigner transform: given two functions $f,g\in L^2(\R^d)$, one has
	$$
	\Wig[f\otimes \overline g]=\Wig(f,g),
	$$
where we denote $(f\otimes \overline{g})(x,y):=f(x)\cdot \overline{g(y)}$, for $x,y\in\R^d.$	Now, we consider  the projective tensor product $\mathcal{S}_{\omega}(\mathbb{R}^d)\otimes_\pi \mathcal{S}_{\omega}(\mathbb{R}^d)$ that, as $\mathcal{S}_{\omega}(\mathbb{R}^d)$ is metrizable and barrelled, it is also barrelled \cite[15.6.6]{Jarchow}. Moreover, it is dense in $\mathcal{S}_{\omega}(\mathbb{R}^{2d})$. Since the Wigner-like transform is an isomorphism 
from $\mathcal{S}_{\omega}(\mathbb{R}^{2d})$ to $\mathcal{S}_{\omega}(\mathbb{R}^{2d})$ and  $$A:=\text{span} \big\{\Wig(\varphi,\psi)\,:\,\varphi,\psi\in \mathcal{S}_{\omega}(\mathbb{R}^d)\big\}$$ coincides with $\Wig[\mathcal{S}_{\omega}(\mathbb{R}^d)\otimes_\pi \mathcal{S}_{\omega}(\mathbb{R}^d)]$, we deduce that $A$ is a dense and barrelled subspace of $\mathcal{S}_{\omega}(\mathbb{R}^{2d})$. Being $\mathcal{S}_{\omega}(\mathbb{R}^{d})$ a nuclear space \cite{BJOS-2021,BJOS-2020}, the induced topology in $A$ from $\mathcal{S}_{\omega}(\mathbb{R}^{2d})$ is the same as the topology of $\mathcal{S}_{\omega}(\mathbb{R}^d)\otimes_\pi \mathcal{S}_{\omega}(\mathbb{R}^d)$. Hence, any ultradistribution $a\in \mathcal{S}'_{\omega}(\mathbb{R}^{2d})$ satisfies $a \in (O_C^{\omega})'(\mathbb{R}^{2d})$ whenever $a*\Wig(\varphi,\psi)\in \mathcal{S}_{\omega}(\mathbb{R}^{2d})$ for every $\psi,\varphi\in \mathcal{S}_{\omega}(\mathbb{R}^d)$. In fact, for $a\in \mathcal{S}'_{\omega}(\mathbb{R}^{2d})$, the convolution operator $\mathcal{C}_a:\mathcal{S}_{\omega}(\mathbb{R}^{2d})\to \mathcal{S}'_{\omega}(\mathbb{R}^{2d})$ is obviously continuous. Its restriction to $A$, $\mathcal{C}_a|_A:A\to \mathcal{S}_{\omega}(\mathbb{R}^{2d})$, has closed graph. Since $A$ is barrelled with the induced topology from  $\mathcal{S}_{\omega}(\mathbb{R}^{2d})$, $\mathcal{C}_a|_A$ is also continuous. Finally, as $A$ is dense in $\mathcal{S}_{\omega}(\mathbb{R}^{2d})$, its extension to $\mathcal{S}_{\omega}(\mathbb{R}^{2d})$ is continuous and coincides with $\mathcal{C}_a$, which shows that $a$ is an $\omega$-convolutor. In particular, we have that $\Wig^{-1}[a*\Wig(\varphi,\psi)]\in \mathcal{S}_{\omega}(\mathbb{R}^{2d})$ for every $\varphi,\psi\in \mathcal{S}_{\omega}(\mathbb{R}^d)$, which is, in view of Theorem~\ref{TheoContCompLocTrivial}, a stronger condition than $$\text{Wig}^{-1}[a*\Wig(\varphi,\psi)]\in \bigcap_{\lambda>0} M^{\infty}_{\lambda \otimes (-\mu)}(\mathbb{R}^{2d}),$$
for some $\mu>0$ and every $\varphi,\psi\in \mathcal{S}_{\omega}(\mathbb{R}^d)$. So, (iii) cannot imply (ii) in Theorem~\ref{compactness-implications}. Indeed, the following example gives a compact localization operator whose symbol is not an $\omega$-convolutor for some concrete windows.
\end{nota}

\begin{exam} We recall the following notation from \cite{G-Found}, but adapted to our definition of Fourier transform. Given two functions $f,g\in L^2(\R^d)$, we denote the \emph{cross-ambiguity function} of $f$ and $g$ by 
 \begin{equation}
 	\label{cross-amb}
 	A(f,g)(x,\xi)=\int_{\R^d}f\big(t+\frac{x}{2}\big)\overline{g\big(t-\frac{x}{2}\big)}e^{-i t\cdot \xi}dt=e^{\frac{i}{2}x\cdot\xi}V_gf(x,\xi).
 \end{equation}
We also denote by $\mathcal{U}$ the \emph{rotation operator} $\mathcal{U}F(x,\xi)=F(\xi,-x)$ of a function $F$ in $\R^{2d}$, and by $\mathcal{I}$ the reflection operator introduced in Section~\ref{preli}. Assume now that $f,g \in \mathcal{S}_{\omega}(\mathbb{R}^d)\setminus\{0\}$.  Using \cite[Lemma 4.3.4]{G-Found} and the properties of the Fourier transform, we have
\begin{equation}
\begin{aligned}\label{amb}
\mathcal{F}\Wig(f,g)(x,\xi) &= \mathcal{F}{\mathcal{F}\mathcal{U}A(f,g)}(x,\xi)= (2\pi)^{2d}\mathcal{I}\,\mathcal{U}A(f,g)(x,\xi)\\
&= (2\pi)^{2d}A(f,g)(-\xi,x)=(2\pi)^{2d} e^{-\frac{i}{2}x\cdot\xi}V_gf(-\xi,x)
\end{aligned}
	\end{equation}
On the other hand, by \cite[Lemma 4.3.1]{G-Found}, with our definition of Fourier transform, we have
\begin{equation*}
		\Wig(f,g)(x,\xi)=2^d e^{2ix\cdot \xi} V_{\mathcal{I}g}f(2x,2\xi).
\end{equation*}
Hence, we obtain 
\begin{equation*}
	\Wig(f,g)\Big(-\frac{\xi}{2},\frac{x}{2}\Big)=2^d e^{-\frac{i}{2}x\cdot \xi} V_{\mathcal{I}g}f(-\xi,x).
\end{equation*}
Consequently
\begin{equation}
	\label{wig-st}
V_{g}f(-\xi,x)=2^{-d}e^{\frac{i}{2}x\cdot \xi} \Wig(f,\mathcal{I}g)\Big(-\frac{\xi}{2},\frac{x}{2}\Big).
\end{equation}
So, combining \eqref{amb} and \eqref{wig-st}, we deduce
\begin{equation}
	\label{wig-for}
	\mathcal{F}\Wig(f,g)(x,\xi)=(\sqrt{2}\pi)^{2d}\Wig(f,\mathcal{I}g)\Big(-\frac{\xi}{2},\frac{x}{2}\Big).
\end{equation}
Suppose that $d=1$ and take now $f = \mathcal{I}g$ and assume $\supp(f) \subseteq [a,b]$ for some $a,b \in \mathbb{R}$ with $a<b$. As in \cite[Lemma 4.3.5]{G-Found}, we deduce that
$$ \mathcal{F}\Wig(f,\mathcal{I}f)(x,\xi) = (\sqrt{2}\pi)^{2d} \Wig(f,f)(-\xi/2, x/2) = 0, \ \text{for}\  \xi \notin [-2b,-2a]. $$

Now, we consider $a \in \mathcal{S}'_{\omega}(\mathbb{R}^{2})$ such that $\widehat{a}(x,\xi) = 1$ for every $\xi \in [-2b,-2a]$ and zero otherwise. In particular, $\widehat{a} \notin O_M^{\omega}(\mathbb{R}^{2d})$, and hence $a \notin (O_C^{\omega})'(\mathbb{R}^{2d})$. Then
$$ a \ast \Wig(f, \mathcal{I}f) = \mathcal{F}^{-1} (\widehat{a} \cdot \mathcal{F}\Wig(f,\mathcal{I}f)) = \Wig(f, \mathcal{I}f) \in \mathcal{S}_{\omega}(\mathbb{R}^{2}). $$
So  $L^a_{f, \mathcal{I}f}:\mathcal{S}_{\omega}(\mathbb{R})\to \mathcal{S}_{\omega}(\mathbb{R})$ is compact.
\end{exam}

Fern\'andez and Galbis showed~(\cite[Theorem 3.15]{FG-Compact} and \cite[Theorem 4.7]{FG-Some}) that, for $a \in M^{\infty}(\mathbb{R}^{2d})$, the localization operator $L^a_{\psi,\gamma}$ is compact in $L^2(\mathbb{R}^d)$ ($M^{p,q}(\mathbb{R}^d)$, $1 \leq p,q \leq \infty$) for every $\psi,\gamma\in \mathcal{S}(\mathbb{R}^d)$ if and only if, given $0\neq g_0 \in \mathcal{S}(\mathbb{R}^{2d})$, for every $R>0$, the following holds:
\begin{equation}\label{EqFG}
\lim_{|z|\to\infty} \sup_{|\eta| \leq R} |V_{g_0}a(z,\eta)| = 0.
\end{equation}
By Theorem~\ref{TheoCharConvSTFT}, if $a\in (O_C)'(\mathbb{R}^{2d})$, then  it satisfies~\eqref{EqFG}. Thus:
\begin{theo}\label{TheoSL2}
Let $1\leq p,q\leq +\infty$. If $a \in M^{\infty}(\mathbb{R}^{2d}) \cap (O_C)'(\mathbb{R}^{2d})$, then the localization operator $L^a_{\psi,\gamma}$ is compact for every  $\psi,\gamma\in \mathcal{S}(\mathbb{R}^d)$ when acting from $\mathcal{S}(\mathbb{R}^d)$ into itself and when acting from $M^{p,q}(\mathbb{R}^d)$ into itself.
\end{theo}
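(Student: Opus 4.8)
The plan is to treat the two claimed compactness statements separately, since each follows from a characterization already available to us, and the hypothesis $a \in M^{\infty}(\R^{2d}) \cap (O_C)'(\R^{2d})$ supplies exactly what each half needs. The unifying idea is that $(O_C)'$-membership alone settles the Schwartz target via the paper's own machinery, while both ingredients of the hypothesis are required to trigger the external Fern\'andez--Galbis equivalence for the modulation-space targets.

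For the action on $\mathcal{S}(\R^d)$, I would specialize the general theory to the weight $\omega(t) = \log(1+t)$. First I would record that this weight is subadditive, since $(1+s)(1+t) \geq 1+s+t$ gives $\log(1+s+t) \leq \log(1+s) + \log(1+t)$, and that for it one has $\mathcal{S}_{\omega}(\R^d) = \mathcal{S}(\R^d)$ and $(O_C^{\omega})'(\R^{2d}) = (O_C)'(\R^{2d})$, so that the standing assumptions of Section~\ref{SectLoc} are met. Since by hypothesis $a \in (O_C)'(\R^{2d})$, the implication $(ii) \Rightarrow (iii)$ of Theorem~\ref{compactness-implications} then yields directly that $L^a_{\psi,\gamma}: \mathcal{S}(\R^d) \to \mathcal{S}(\R^d)$ is compact for every pair of windows $\psi,\gamma \in \mathcal{S}(\R^d)$. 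Note that only membership in $(O_C)'$ is used here; the extra information $a \in M^{\infty}$ plays no role for this half.

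For the action on $M^{p,q}(\R^d)$, I would invoke the Fern\'andez--Galbis characterization recalled just above the statement: for $a \in M^{\infty}(\R^{2d})$, the operator $L^a_{\psi,\gamma}$ is compact on $M^{p,q}(\R^d)$ for all $1 \leq p,q \leq \infty$ and all windows in $\mathcal{S}(\R^d)$ if and only if condition~\eqref{EqFG} holds. As $a \in M^{\infty}(\R^{2d})$ by hypothesis, it only remains to verify~\eqref{EqFG}, and this is where $a \in (O_C)'(\R^{2d})$ enters. Applying Theorem~\ref{TheoCharConvSTFT} in dimension $2d$ with $\omega(t) = \log(1+t)$ and window $g_0$, the convolutor membership is equivalent to the STFT estimate $|V_{g_0} a(z,\eta)| \leq C_{\mu} (1+|z|)^{-\mu} (1+|\eta|)^{\lambda_{\mu}}$ for every $\mu>0$. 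Fixing $R>0$ and restricting to $|\eta| \leq R$, the factor $(1+|\eta|)^{\lambda_{\mu}}$ is bounded by $(1+R)^{\lambda_{\mu}}$, so that $\sup_{|\eta| \leq R} |V_{g_0} a(z,\eta)| \leq C_{\mu} (1+R)^{\lambda_{\mu}} (1+|z|)^{-\mu} \to 0$ as $|z| \to \infty$ for any fixed $\mu>0$; this is precisely~\eqref{EqFG}. The Fern\'andez--Galbis criterion then delivers compactness of $L^a_{\psi,\gamma}$ on every $M^{p,q}(\R^d)$.

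I do not expect a genuine analytic obstacle here, as the result is a synthesis of Theorem~\ref{compactness-implications} (for the Schwartz target) and the external Fern\'andez--Galbis theorem (for the modulation-space targets), bridged by the elementary passage from $(O_C)'$-membership to the decay~\eqref{EqFG} through Theorem~\ref{TheoCharConvSTFT}. The only point demanding a moment's care is the bookkeeping of the two hypotheses: $(O_C)'$ alone drives the Schwartz case, whereas both $M^{\infty}$ and $(O_C)'$ must be present to activate the Fern\'andez--Galbis equivalence in the $M^{p,q}$ case.
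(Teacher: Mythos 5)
Your proposal is correct and follows essentially the same route as the paper: the paper obtains the $M^{p,q}$ case by noting (via Theorem~\ref{TheoCharConvSTFT} with $\omega(t)=\log(1+t)$) that $(O_C)'$-membership forces the decay condition~\eqref{EqFG} and then invoking the Fern\'andez--Galbis characterization, while the Schwartz case is exactly the specialization of Theorem~\ref{compactness-implications}~$(ii)\Rightarrow(iii)$ to this weight. Your write-up merely makes explicit the bookkeeping that the paper leaves implicit in the sentence preceding the theorem.
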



We can also generalize Theorem~\ref{TheoSL2} for every subadditive weight function $\omega$ when $1\leq p,q < +\infty$. With minor modifications in the proof of~\cite[Lemma 0.6]{BdM}, we obtain~\cite[Theorem 0.7]{BdM} under a weaker assumption (in fact, observe that the exponential in the limit of the next result does not depend on the variable $\eta$):
\begin{theo}\label{TheoChiaraComp}
Let $\omega$ be subadditive and $1 \leq p,q <+\infty$. Fix $\lambda\ge 0$. Given  $0\neq g_0 \in \mathcal{S}_{\omega}(\mathbb{R}^d)$, if $a \in M^{\infty}_{\lambda}(\mathbb{R}^{2d})$ satisfies, for every $R>0$,
$$ \lim_{|z|\to\infty} \sup_{|\eta| \leq R} |V_ga(z,\eta)| e^{\lambda \omega(z)} = 0, $$
then $L^{a}_{\psi,\gamma}: M^{p,q}_{\lambda}(\mathbb{R}^d) \to M^{p,q}_{\lambda}(\mathbb{R}^d)$ is compact for every $\psi,\gamma\in\mathcal{S}_{\omega}(\mathbb{R}^d)$.
\end{theo}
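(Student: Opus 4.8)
The plan is to realise $L^a_{\psi,\gamma}$ as a limit, in the operator norm of $\mathcal{L}(M^{p,q}_{\lambda}(\mathbb{R}^d))$, of compact localization operators with truncated symbols, and then to use that the compact operators form a closed subspace of $\mathcal{L}(M^{p,q}_{\lambda}(\mathbb{R}^d))$ (a Banach space). Fix $\chi\in C_c^{\infty}(\mathbb{R}^{2d})$ with $\chi\equiv 1$ on the unit ball, set $\chi_N(z):=\chi(z/N)$ and $a_N:=\chi_N\,a$. Since $\chi_N\in O_M^{\omega}(\mathbb{R}^{2d})$, each $a_N$ belongs to $M^{\infty}_{\lambda}(\mathbb{R}^{2d})$ and is supported in $\{|z|\le N+1\}$, while $(1-\chi_N)a$ is supported in $\{|z|\ge N\}$ with $\norm{(1-\chi_N)a}_{M^{\infty}_{\lambda}}\le C\norm{a}_{M^{\infty}_{\lambda}}$.

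First I would show that each $L^{a_N}_{\psi,\gamma}\colon M^{p,q}_{\lambda}(\mathbb{R}^d)\to M^{p,q}_{\lambda}(\mathbb{R}^d)$ is compact. Using the representation $L^{a_N}_{\psi,\gamma}f=\int_{|w|\le N+1} a_N(w)\,V_{\psi}f(w)\,\Pi(w)\gamma\,dw$ coming from~\eqref{EqLocMult}, the integration runs over a compact set; since $w\mapsto V_{\psi}f(w)$ is bounded there by $C\norm{f}_{M^{p,q}_\lambda}$ (uniformly for $\norm{f}\le1$) and $w\mapsto\Pi(w)\gamma$ is continuous into $M^{p,q}_{\lambda}(\mathbb{R}^d)$, Riemann sums over a fine partition approximate $L^{a_N}_{\psi,\gamma}$ in operator norm by finite-rank operators (each summand $f\mapsto V_{\psi}f(w_j)\,\Pi(w_j)\gamma$ being rank one). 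Hence $L^{a_N}_{\psi,\gamma}$ is compact; this is the compactness building block used in~\cite{BdM} (cf.~\cite{FG-Compact}), and it transfers to the present weighted setting.

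The core step is to prove $\norm{L^a_{\psi,\gamma}-L^{a_N}_{\psi,\gamma}}=\norm{L^{(1-\chi_N)a}_{\psi,\gamma}}\to0$ as $N\to\infty$. Here I would use the modified version of~\cite[Lemma 0.6]{BdM}: expanding the symbol $b:=(1-\chi_N)a$ through its own STFT and almost-diagonalizing against a Gabor frame, one bounds $\norm{L^{b}_{\psi,\gamma}}_{M^{p,q}_{\lambda}\to M^{p,q}_{\lambda}}$ by a Young/Schur-type estimate of the form $C\int_{\mathbb{R}^{2d}}\big(\sup_{z\in\mathbb{R}^{2d}}|V_{g_0}b(z,\eta)|\,e^{\lambda\omega(z)}\big)\,\Phi(\eta)\,d\eta$, where $\Phi$ is a rapidly decreasing factor built from $V_{g_0}(\Pi(\cdot)\psi)$ and $V_{g_0}(\Pi(\cdot)\gamma)$, and the weight is transported between the several arguments exactly as in~\eqref{Eq1} and~\eqref{Eq2} using the subadditivity of $\omega$. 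I would then split the $\eta$-integral at $|\eta|=R$: on $|\eta|>R$ the rapid decay of $\Phi$ makes the contribution smaller than any prescribed $\varepsilon$ once $R$ is large, uniformly in $N$ because $\norm{b}_{M^{\infty}_{\lambda}}\le C\norm{a}_{M^{\infty}_{\lambda}}$; on $|\eta|\le R$, since $b$ is supported in $\{|z|\ge N\}$, the quantity $\sup_{z}\sup_{|\eta|\le R}|V_{g_0}b(z,\eta)|e^{\lambda\omega(z)}$ is controlled by the hypothesis $\lim_{|z|\to\infty}\sup_{|\eta|\le R}|V_{g_0}a(z,\eta)|e^{\lambda\omega(z)}=0$ and tends to $0$ as $N\to\infty$. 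Letting first $R\to\infty$ and then $N\to\infty$ yields the norm convergence, and therefore $L^a_{\psi,\gamma}=\lim_N L^{a_N}_{\psi,\gamma}$ is compact.

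The main obstacle is exactly this decoupling in the variable $\eta$. Because the weight $e^{\lambda\omega(z)}$ in the hypothesis is independent of $\eta$, one cannot absorb the growth of $V_{g_0}a$ in $\eta$ into an $\eta$-dependent weight as in the original formulation of~\cite[Theorem 0.7]{BdM}; instead one must handle the two frequency regimes separately---window decay for $|\eta|>R$ and the vanishing-at-infinity of $V_{g_0}a$ for $|\eta|\le R$---and it is precisely this two-scale splitting, together with the subadditivity of $\omega$ needed to move the weight across the convolution estimates, that makes the weaker assumption sufficient. The restriction $1\le p,q<+\infty$ enters to guarantee the density of $\mathcal{S}_{\omega}(\mathbb{R}^d)$ in $M^{p,q}_{\lambda}(\mathbb{R}^d)$ and the Young/Schur bounds on the Gabor matrix.
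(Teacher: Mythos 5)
Your proposal follows essentially the same route as the paper, which itself gives no details beyond the remark that the result follows from the proof of \cite[Lemma 0.6]{BdM} ``with minor modifications''; the modification you identify --- splitting the $\eta$-integral at $|\eta|=R$ so that the window factor absorbs the regime $|\eta|>R$ uniformly in $N$, while the hypothesis (with the weight $e^{\lambda\omega(z)}$ independent of $\eta$) handles $|\eta|\le R$ --- is precisely the point the authors allude to, and the truncation-plus-norm-limit scheme is the one inherited from \cite{FG-Compact} and \cite{BdM}. Two small technical repairs are needed in your write-up: a generic $\chi\in C_c^{\infty}(\mathbb{R}^{2d})$ is \emph{not} an $\omega$-multiplier for weights growing faster than $\log$, so the cutoff must be taken in the non-quasianalytic class $\mathcal{D}_{(\omega)}(\mathbb{R}^{2d})$ (nonempty by condition $(\beta)$) to get the uniform bound on $\norm{(1-\chi_N)a}_{M^{\infty}_{\lambda}}$; and the vector-valued integral representation $L^{a_N}_{\psi,\gamma}f=\int a_N(w)V_{\psi}f(w)\Pi(w)\gamma\,dw$ presupposes that $a_N$ is a function, whereas elements of $M^{\infty}_{\lambda}(\mathbb{R}^{2d})$ are in general only ultradistributions, so the compactness of the truncated operators should instead be extracted from the same Schur-type bound (the compact support of $a_N$ forces $\sup_{|\eta|\le R}|V_{g_0}a_N(z,\eta)|e^{\lambda\omega(z)}\to 0$ as $|z|\to\infty$) or from a further regularization of $a_N$. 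Neither point affects the validity of the overall argument.
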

As a consequence, we obtain 
\begin{cor}
Let $\omega$ be subadditive and $1 \leq p,q <+\infty$. Fix $\lambda\ge 0$. If $a \in M^{\infty}_{\lambda}(\mathbb{R}^{2d}) \cap (O_C^{\omega})'(\mathbb{R}^{2d})$, then the localization operator $L^a_{\psi,\gamma}$ is compact for every $\psi, \gamma \in \mathcal{S}_{\omega}(\mathbb{R}^d)$ when acting from $\mathcal{S}_{\omega}(\mathbb{R}^d)$ into itself  and when acting from  $M^{p,q}_{\lambda}(\mathbb{R}^d)$ into itself.
\end{cor}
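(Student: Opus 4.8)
The plan is to treat the two assertions separately, each reducing to a result already proved. For the part concerning $M^{p,q}_{\lambda}(\mathbb{R}^d)$, I would invoke Theorem~\ref{TheoChiaraComp}: its hypotheses are exactly $a \in M^{\infty}_{\lambda}(\mathbb{R}^{2d})$, which is assumed, together with the decay condition $\lim_{|z|\to\infty} \sup_{|\eta|\le R} |V_{g_0}a(z,\eta)| e^{\lambda\omega(z)} = 0$ for every $R>0$ and a fixed window $0\neq g_0 \in \mathcal{S}_{\omega}(\mathbb{R}^{2d})$. The whole task therefore reduces to extracting this decay condition from the hypothesis $a \in (O_C^{\omega})'(\mathbb{R}^{2d})$.

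First I would apply Theorem~\ref{TheoCharConvSTFT}, stated in dimension $2d$, to convert the convolutor membership into an STFT estimate: for every $\mu>0$ there exist $C_{\mu}, \lambda_{\mu}>0$ with $|V_{g_0}a(z,\eta)| \le C_{\mu} e^{-\mu\omega(z)} e^{\lambda_{\mu}\omega(\eta)}$ for all $z,\eta \in \mathbb{R}^{2d}$. Choosing $\mu = \lambda+1$ and bounding $e^{\lambda_{\mu}\omega(\eta)} \le e^{\lambda_{\mu}\omega(R)}$ on the set $\{|\eta|\le R\}$ (using that $\omega$ is radial and increasing), I obtain $\sup_{|\eta|\le R} |V_{g_0}a(z,\eta)| e^{\lambda\omega(z)} \le C_{\mu} e^{\lambda_{\mu}\omega(R)} e^{(\lambda-\mu)\omega(z)} = C_{\mu} e^{\lambda_{\mu}\omega(R)} e^{-\omega(z)}$. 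Since condition $(\gamma)$ of the weight forces $\omega(z) \to +\infty$ as $|z|\to\infty$, the right-hand side tends to $0$, so the required decay holds and Theorem~\ref{TheoChiaraComp} yields that $L^a_{\psi,\gamma}$ is compact from $M^{p,q}_{\lambda}(\mathbb{R}^d)$ into itself for every $\psi,\gamma \in \mathcal{S}_{\omega}(\mathbb{R}^d)$.

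For the assertion on $\mathcal{S}_{\omega}(\mathbb{R}^d)$, the hypothesis $a \in (O_C^{\omega})'(\mathbb{R}^{2d})$ is precisely condition (ii) of Theorem~\ref{compactness-implications}, so the implication $(ii)\Rightarrow(iii)$ directly gives that $L^a_{\psi,\gamma}:\mathcal{S}_{\omega}(\mathbb{R}^d)\to\mathcal{S}_{\omega}(\mathbb{R}^d)$ is compact for every pair of windows. I do not anticipate a genuine obstacle, as this is a corollary; the only point requiring care is selecting $\mu$ strictly larger than $\lambda$ so that the factor $e^{(\lambda-\mu)\omega(z)}$ decays, while the role of the $M^{\infty}_{\lambda}$ hypothesis is merely to supply the remaining hypothesis of Theorem~\ref{TheoChiaraComp}.
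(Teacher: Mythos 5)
Your proposal is correct and follows exactly the route the paper intends: the $\mathcal{S}_{\omega}$ part is the implication $(ii)\Rightarrow(iii)$ of Theorem~\ref{compactness-implications}, and the $M^{p,q}_{\lambda}$ part follows from Theorem~\ref{TheoChiaraComp} once the decay hypothesis is extracted from Theorem~\ref{TheoCharConvSTFT}$(3)$ by taking $\mu>\lambda$ and bounding $\omega(\eta)\leq\omega(R)$ on $|\eta|\leq R$. The details you supply (including reading the window in Theorem~\ref{TheoChiaraComp} as an element of $\mathcal{S}_{\omega}(\mathbb{R}^{2d})$) are exactly the ones the paper leaves implicit.
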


 {\bf Acknowledgments.} 
 The authors are very grateful to Jos\'e Bonet for helpful discussions regarding Remark~\ref{nota64}.
 Asensio is  supported by the project GV PROMETEU/2021/070.
Boiti and Oliaro have been partially supported by the INdAM - GNAMPA Projects 2020 ``Analisi microlocale e applicazioni: PDE's stocastiche e di evoluzione, analisi tempo-frequenza, variet\`{a}'' and 2023 ``Analisi di Fourier e Analisi Tempo-Frequenza
di Spazi Fun\-zio\-na\-li ed Ope\-ra\-to\-ri". Boiti is partially supported by the Projects FAR 2021, FAR 2022,
FIRD 2022 and FAR 2023 (University of Ferrara).
Jornet is partially supported by the project PID2020-\-119457GB-\-100 funded by MCIN/AEI/10.13039/501100011033 and by ``ERDF A way of making Europe''.

\end{document}